\theoremstyle{plain}
\newtheorem{thm}{Theorem}[section]
\newtheorem*{thm*}{Theorem}
\newtheorem{prop}[thm]{Proposition}
\newtheorem{lem}[thm]{Lemma}
\newtheorem{cor}[thm]{Corollary}
\theoremstyle{definition}
\theoremstyle{remark}
\newtheorem{rem}[thm]{Remark}%[section]
\renewcommand{\epsilon}{\varepsilon}
\newcommand{\ric}{\operatorname{Ric}}
\newcommand{\Div}{\operatorname{div}}
\newcommand{\tr}{\operatorname{tr}}
\newcommand{\CD}{\operatorname{CD}}
\title[Liouville theorem for heat equation along ancient super Ricci flow]{Liouville theorem for heat equation along\\ ancient super Ricci flow via reduced geometry}
\author{Keita Kunikawa}
\address{Cooperative Faculty of Education, Utsunomiya University, 350 Mine-Machi, Utsunomiya, 321-8505, Japan}
\email{kunikawa@cc.utsunomiya-u.ac.jp}
\author{Yohei Sakurai}
\address{Department of Mathematics, Saitama University, 255 Shimo-Okubo, Sakura-ku, Saitama-City, Saitama, 338-8570, Japan}
\email{ysakurai@rimath.saitama-u.ac.jp}
\subjclass[2010]{Primary 58J35; Secondly 53C44}
\keywords{Ancient super Ricci flow; Heat equation; Liouville theorem; Space-only local gradient estimate}
\date{June 2, 2021}
\begin{document}
\maketitle

\begin{abstract}
The aim of this article is to provide a Liouville theorem for heat equation along ancient super Ricci flow.
We formulate such a Liouville theorem under a growth condition concerning Perelman's reduced distance.
\end{abstract}

%%%%%%%%%%%%%%%%%%%%%%%%%%%%
%%%%%%%%%%%%%%%%%%%%%%%%%%%%
%%%%%%%%%%%%%%%%%%%%%%%%%%%%
\section{Background}

In the present paper,
we establish a Liouville theorem for heat equation along ancient super Ricci flow from Perelman's reduced geometric viewpoint (see Theorem \ref{thm:main result} and Corollary \ref{thm:main cor} below).
In this first section,
we describe the background of our work.

%%%%%%%%%%%%%%%%%%%%%%%%%%%%
\subsection{Ancient super Ricci flow}\label{sec:Ancient super Ricci flow}
We first recall the notion of ancient super Ricci flow.

Let $(M,g(t))_{t\in I}$ be a smooth manifold equipped with a time-dependent Riemannian metric.
Such a time-dependent Riemannian manifold is called \textit{Ricci flow} if $g(t)$ evolves by
\begin{equation}\label{eq:RF}
\partial_{t}g= -2 \ric.
\end{equation}
The notion of Ricci flow has been established by Hamilton \cite{H1}.
Perelman \cite{P} has vastly developed Hamilton's Ricci flow theory,
and solved the Poincar\'e conjecture.
Since his celebrated work,
the Ricci flow theory has been one of the central objects in geometric analysis.

A supersolution to the Ricci flow equation (\ref{eq:RF}) is called super Ricci flow.
More precisely,
a time-dependent Riemannian manifold $(M,g(t))_{t\in I}$ is called \textit{super Ricci flow} if
\begin{equation*}\label{eq:sRF}
\partial_{t}g\geq -2 \ric.
\end{equation*}
Notice that
static Riemannian manifold of non-negative Ricci curvature is a trivial example of super Ricci flow;
in particular,
super Ricci flow can be viewed as a time-dependent version of manifold of non-negative Ricci curvature.
The notion of super Ricci flow has been introduced by McCann-Topping \cite{MT}.
They have focused on the relation between Ricci flow theory and optimal transport theory,
and characterized super Ricci flow in terms of the monotonicity of Wasserstein distance along heat flow.
After that,
several other characterizations have been investigated via Bochner inequality, Bakry-\'Emery gradient estimate, Poincar\'e inequality, logarithmic Sobolev inequality, and convexity of entropies (see e.g., \cite{HN}, \cite{LL4}, \cite{S}).
Recently,
based on such characterizations,
the notion of super Ricci flow has been extended to time-dependent (non-smooth) metric measure spaces by Sturm \cite{S},
and begun to be studied from metric measure geometric perspective (see e.g., \cite{K}, \cite{KS}, \cite{S}).

A Ricci flow $(M,g(t))_{t\in I}$ is said to be \textit{ancient} when $I=(-\infty,0]$.
Ancient Ricci flow is one of the crucial objects in singular analysis of Ricci flow.
Actually,
it appears as a parabolic rescaling limit at singularity.
In the present paper,
we say that a super Ricci flow $(M,g(t))_{t\in I}$ is \textit{ancient} when $I=(-\infty,0]$.

%%%%%%%%%%%%%%%%%%%%%%%%%%%%
\subsection{Liouville theorems for ancient solutions to heat equation}
We next recall some previous works on Liouville theorems for ancient solutions to heat equation.

Liouville type properties for harmonic functions on Riemannian manifolds are some of the central topics in geometric analysis since the pioneering work by Yau \cite{Y} (cf. the recent survey \cite{CM} for its history).
The classical Liouville theorem by Yau asserts that
on a complete manifold of non-negative Ricci curvature,
any positive harmonic functions must be constant.

One of the natural research directions is to extend such Liouville type properties to ancient solutions to heat equation.
Here we recall that
on a Riemannian manifold $(M,g)$,
a solution $u:M\times I\to \mathbb{R}$ to the heat equation
\begin{equation*}
\partial_t u=\Delta u
\end{equation*}
is called \textit{ancient} when $I=(-\infty,0]$.
Souplet-Zhang \cite{SZ} have established the following parabolic version of the Yau's Liouville theorem (see Theorem 1.2 in \cite{SZ}):
\begin{thm}[\cite{SZ}]\label{thm:SZ}
Let $(M,g)$ be a complete Riemannian manifold of non-negative Ricci curvature.
Then we have the following:
\begin{enumerate}\setlength{\itemsep}{+0.7mm}
\item Let $u:M\times (-\infty,0]\to (0,\infty)$ be a positive ancient solution to the heat equation.
If
\begin{equation}\label{eq:positive growth2}
u(x,t)=\exp\left[o\left(d(x)+\sqrt{\vert t\vert}\right)\right]
\end{equation}
near infinity, then $u$ must be constant.
Here $d(x)$ denotes the Riemannian distance from a fixed point; \label{enum:positive growth}
\item let $u:M\times (-\infty,0]\to \mathbb{R}$ be an ancient solution to the heat equation.
If
\begin{equation}\label{eq:usual growth2}
u(x,t)=o\left(d(x)+\sqrt{\vert t\vert}\right)
\end{equation}
near infinity, then $u$ is constant. \label{enum:usual growth}
\end{enumerate}
\end{thm}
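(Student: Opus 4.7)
My plan is to derive both parts from a single space-only local gradient estimate of Souplet--Zhang type for positive solutions of the heat equation, and then let the localisation scale tend to infinity. Once $\nabla u \equiv 0$ is established, $u$ depends only on $t$, and the heat equation $\partial_t u = \Delta u = 0$ makes $u$ constant; so the real work is in proving $\nabla u \equiv 0$.

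The core ingredient (call it Step 1) is the following estimate: for any positive solution $v$ of $\partial_t v = \Delta v$ on the parabolic cylinder $Q_{R,T} := B(x_0, R) \times [t_0 - T, t_0]$ with $A := \sup_{Q_{R,T}} v$,
\begin{equation*}
\frac{|\nabla v|}{v}(x,t) \le C_n \left( \frac{1}{R} + \frac{1}{\sqrt{T}} \right) \left( 1 + \log \frac{A}{v(x,t)} \right)
\end{equation*}
on $B(x_0, R/2) \times [t_0 - T/4, t_0]$, with $C_n$ depending only on $n := \dim M$. To prove this I would set $f := \log(v/A) \le 0$, form the Souplet--Zhang test function $w := |\nabla f|^2 / (1 - f)^2$, use the Bochner identity and $\ric \ge 0$ to absorb the Hessian term, multiply by a standard space-time cut-off $\eta$ supported in $Q_{R,T}$, and invoke the parabolic maximum principle on the resulting differential inequality for $\eta w$. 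The technical nuisance is that the spatial cut-off is built from $d(\cdot, x_0)$, which is only Lipschitz across the cut locus; this is treated by Calabi's trick.

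For part (1), fix $(x_0, t_0)$, take $T = R^2$, and apply Step 1 to $u$ on $Q_{R, R^2}$. Writing $A(R) := \sup_{Q_{R, R^2}} u$, hypothesis (\ref{eq:positive growth2}) gives $\log A(R) = o(R)$, so at $(x_0, t_0)$ one obtains
\begin{equation*}
\frac{|\nabla u|(x_0, t_0)}{u(x_0, t_0)} \le \frac{C_n}{R} \left( 1 + \log \frac{A(R)}{u(x_0, t_0)} \right) \longrightarrow 0 \quad (R \to \infty),
\end{equation*}
hence $\nabla u(x_0, t_0) = 0$. For part (2), completeness together with $\ric \ge 0$ and Hopf--Rinow make $\overline{Q_{R, R^2}}$ compact, so $I(R) := \inf_{Q_{R, R^2}} u$ and $S(R) := \sup_{Q_{R, R^2}} u$ are finite, and (\ref{eq:usual growth2}) gives $S(R) - I(R) = o(R)$. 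Apply Step 1 to the positive solution $\tilde u := u - I(R) + 1$ on $Q_{R, R^2}$; at $(x_0, t_0)$ it yields
\begin{equation*}
|\nabla u|(x_0, t_0) \le \frac{C_n\, \tilde u(x_0, t_0)}{R} \left( 1 + \log \frac{\sup_{Q_{R,R^2}} \tilde u}{\tilde u(x_0, t_0)} \right) \le \frac{C_n \sup_{Q_{R, R^2}} \tilde u}{R},
\end{equation*}
where the second inequality uses the elementary bound $y (1 - \log y) \le 1$ for $y \in (0, 1]$ applied to $y = \tilde u(x_0, t_0) / \sup_{Q_{R,R^2}} \tilde u$. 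Since $\sup_{Q_{R,R^2}} \tilde u \le S(R) - I(R) + 1 = o(R)$, the right-hand side tends to $0$.

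The main obstacle I anticipate is the proof of Step 1 itself: performing the Bochner computation with the correct powers in $w$ and $\eta$, and handling the cut-locus term. Everything else reduces to a short accounting, with the key observation in part (2) that the logarithmic factor is killed by the elementary inequality $y(1 - \log y) \le 1$, so that the gradient at $(x_0, t_0)$ is controlled by the oscillation of $u$ on the scale-$R$ cylinder divided by $R$.
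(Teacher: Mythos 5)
Your argument is correct and is essentially the route used by Souplet--Zhang and reproduced by this paper for the generalization (Theorem~\ref{thm:main result} is deduced from the space-only local gradient estimate Theorem~\ref{thm:gradient estimate}, whose static case $h=0$ is exactly your Step~1): apply the gradient estimate on a cylinder at scale $R$ with $T=R^2$ and send $R\to\infty$. The only minor bookkeeping difference is in part~(2), where you absorb the logarithm via $y(1-\log y)\le 1$, whereas the paper normalizes to $\tilde u:=u+2\overline{A}_R$ so that $\overline{A}_R\le\tilde u\le 3\overline{A}_R$ and the log factor is simply bounded by $1+\log 3$; both yield the same $o(R)/R$ decay.
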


The growth conditions (\ref{eq:positive growth2}) and (\ref{eq:usual growth2}) are sharp in the spatial direction (see \cite{SZ}).

As stated in Subsection \ref{sec:Ancient super Ricci flow},
super Ricci flow can be regarded as a time-dependent version of manifold of non-negative Ricci curvature.
From this point of view,
it seems to be natural to seek for Liouville theorems for heat equation along ancient super Ricci flow.
More precisely,
for an ancient super Ricci flow $(M,g(t))_{t\in (-\infty,0]}$,
the problem is to find suitable growth conditions for a solution $u:M\times (-\infty,0]\to \mathbb{R}$ to heat equation
\begin{equation*}
\partial_t u=\Delta u
\end{equation*}
such that $u$ must become constant.

In the study of ancient Ricci flow,
we often work on the reverse time parameter
\begin{equation*}
\tau:=-t.
\end{equation*}
On this parameter,
the above problem can be translated as follows:
For a backward super Ricci flow $(M,g(\tau))_{\tau \in [0,\infty)}$,
namely,
\begin{equation}\label{eq:back SRF}
\ric\geq \frac{1}{2}\partial_{\tau}g,
\end{equation}
the problem is to find suitable conditions for a solution $u:M\times [0,\infty)\to \mathbb{R}$ to backward heat equation
\begin{equation}\label{eq:bHE}
(\Delta+\partial_{\tau}) u=0
\end{equation}
such that $u$ must become constant.
Guo-Philipowski-Thalmaier \cite{GPT2} have approached this problem,
and shown a Liouville theorem under a growth condition for entropy.
Let us briefly recall their result.
Let $(M,g(\tau))_{\tau \in [0,\infty)}$ be a complete backward super Ricci flow.
For a fixed $x_0 \in M$,
let $k(x_0,x,\tau)$ stand for the heat kernel of the backward conjugate heat equation
\begin{equation*}
\left(\Delta-\partial_{\tau}-\frac{1}{2}(\tr \partial_{\tau}g)\right)v=0.
\end{equation*}
In \cite{GPT2},
for a positive solution $u:M\times [0,\infty)\to (0,\infty)$ to backward heat equation,
they have defined its \textit{entropy} with respect to the heat kernel measure $k(x_0,x,\tau)m(x)$ by
\begin{equation*}
\mathcal{E}(\tau):=\int_{M}\,(u \log u)(x,\tau)\,k(x_0,x,\tau)\,dm(x).
\end{equation*}
They have shown the following (see Theorem 2 in \cite{GPT2}):
\begin{thm}[\cite{GPT2}]\label{thm:entropy}
Let $(M,g(\tau))_{\tau \in [0,\infty)}$ be a complete backward super Ricci flow.
Let $u:M\times [0,\infty) \to (0,\infty)$ be a positive solution to backward heat equation.
We assume
\begin{align*}
&\int_{M} \Vert \nabla(u \log u) \Vert^2(x,\tau)\,k(x_0,x,\tau)\,dm(x)<\infty,\\
&\int_{M} \left\Vert \nabla  \left( \frac{\Vert \nabla u \Vert^2}{u}  \right) \right\Vert^2(x,\tau)\,k(x_0,x,\tau)\,dm(x)<\infty
\end{align*}
for every $\tau>0$.
If we have
\begin{equation}\label{eq:entropy condition}
\lim_{\tau \to \infty}\frac{\mathcal{E}(\tau)}{\tau}=0,
\end{equation}
then $u$ is constant.
\end{thm}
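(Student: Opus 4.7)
The plan is to show that the Fisher-type information
\[
\mathcal{F}(\tau) := \int_{M} \frac{\|\nabla u\|^2}{u}(x,\tau)\,k(x_0,x,\tau)\,dm(x)
\]
is simultaneously the $\tau$-derivative of $\mathcal{E}$ and is non-decreasing in $\tau$. Combined with the sublinear growth hypothesis (\ref{eq:entropy condition}), these two facts will force $\mathcal{F}\equiv 0$, whence $u$ is constant. The engine of the argument is a Perelman-style differentiation under the integral sign, with every term coming from the evolving metric absorbed precisely by the super Ricci flow inequality (\ref{eq:back SRF}).

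As preliminary observation the conjugate heat kernel measure $d\mu_\tau := k\,dm$ is a probability measure for every $\tau$, because combining $\partial_\tau dm = \tfrac{1}{2}(\tr\partial_\tau g)\,dm$ with the defining equation of $k$ yields the clean identity
\[
\partial_\tau(k\,dm) = \Delta k\,dm,
\]
which integrates to zero on $M$. Differentiating $\mathcal{E}(\tau)$ using this, inserting the backward heat equation $\partial_\tau u = -\Delta u$ together with $\Delta(u\log u) = \Delta u\,(\log u+1) + \|\nabla u\|^2/u$, and integrating $\int u\log u\cdot\Delta k\,dm$ by parts (justified by the first $L^{2}(k\,dm)$-hypothesis together with standard Gaussian bounds for $k$), the two $\Delta u\,(\log u+1)$ contributions cancel and one is left with $\mathcal{E}'(\tau) = \mathcal{F}(\tau)\geq 0$.

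For the monotonicity of $\mathcal{F}$, set $F := \|\nabla u\|^2/u$. A direct pointwise computation combining the Bochner formula $\tfrac{1}{2}\Delta\|\nabla u\|^2 = \|\Hess u\|^2 + \langle\nabla\Delta u,\nabla u\rangle + \ric(\nabla u,\nabla u)$ with the backward heat equation and the quotient rule yields the identity
\[
\partial_\tau F + \Delta F = \frac{2\ric(\nabla u,\nabla u) - \partial_\tau g(\nabla u,\nabla u)}{u} + \frac{2}{u}\left\|\Hess u - \frac{\nabla u\otimes\nabla u}{u}\right\|^2.
\]
The second summand equals $2u\|\Hess\log u\|^2$ and is manifestly non-negative, while the first is non-negative by the super Ricci flow condition (\ref{eq:back SRF}). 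Integrating this against $k\,dm$ and converting $\int F\,\Delta k\,dm$ into $\int \Delta F\cdot k\,dm$ by integration by parts — now justified by the second integrability hypothesis — gives $\mathcal{F}'(\tau)\geq 0$.

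Non-decreasingness of $\mathcal{F}$ together with $\mathcal{E}' = \mathcal{F}$ yields, for all $0\leq\tau_0\leq\tau$,
\[
\mathcal{E}(\tau) - \mathcal{E}(\tau_0) = \int_{\tau_0}^{\tau} \mathcal{F}(s)\,ds \geq (\tau-\tau_0)\,\mathcal{F}(\tau_0);
\]
dividing by $\tau$ and invoking (\ref{eq:entropy condition}) forces $\mathcal{F}(\tau_0)\leq 0$, hence $\mathcal{F}(\tau_0)=0$ for every $\tau_0\geq 0$. Since $k>0$ everywhere on $M$, this forces $\nabla u\equiv 0$, and the backward heat equation then gives $\partial_\tau u = -\Delta u = 0$, so $u$ must be constant. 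The principal difficulty in implementing this plan is not any of the algebraic manipulations above but the analytic justification of differentiating under the integral and of the two integration-by-parts operations on a complete, non-compact, time-dependent manifold; the two $L^{2}(k\,dm)$-integrability hypotheses in the theorem are tailored precisely for this purpose, presumably to be implemented through a cutoff argument on geodesic balls combined with standard Gaussian-type bounds on $k$ and its first derivatives.
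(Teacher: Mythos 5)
The paper does not prove Theorem \ref{thm:entropy}; it is cited directly from \cite{GPT2} and stated as background, so there is no internal argument to compare against. Your blind reconstruction is nonetheless correct and follows the entropy-monotonicity strategy of the cited reference. The computation $\mathcal{E}'(\tau) = \mathcal{F}(\tau)$ after cancellation of the $(\log u + 1)\Delta u$ contributions is right, and the pointwise evolution identity
\[
(\partial_\tau + \Delta)\!\left(\frac{\Vert\nabla u\Vert^2}{u}\right) = \frac{(2\ric - \partial_\tau g)(\nabla u,\nabla u)}{u} + 2u\,\Vert \Hess \log u \Vert^2
\]
is verified correctly; the backward super Ricci flow inequality $2\ric \geq \partial_\tau g$ enters at exactly the right place, and the concluding argument (monotone $\mathcal{F}$, sublinear $\mathcal{E}$, hence $\mathcal{F}\equiv 0$, hence $\nabla u\equiv 0$, hence $\partial_\tau u = -\Delta u = 0$) is sound. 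Your reading of the two $L^{2}(k\,dm)$-integrability hypotheses as being tailored to justify the two integration-by-parts steps is also in line with \cite{GPT2}.
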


The linear growth condition (\ref{eq:entropy condition}) is also sharp (see Example 1 in \cite{GPT2}).

\begin{rem}
In \cite{GPT2},
the roles of $t$ and $\tau$ are reversed (cf. Remark 2 in \cite{GPT2}).
\end{rem}

%%%%%%%%%%%%%%%%%%%%%%%%%%%%
%%%%%%%%%%%%%%%%%%%%%%%%%%%%
%%%%%%%%%%%%%%%%%%%%%%%%%%%%
\section{Main results}
In this section,
we present our main result,
and the key ingredient of its proof.

%%%%%%%%%%%%%%%%%%%%%%%%%%%%
\subsection{Liouville theorems via reduced geometry}\label{sec:Liouville theorems}
We now state our Liouville theorem.

In \cite{P},
Perelman has discovered two kinds of important monotone quantities along Ricci flow.
The one is entropy functionals called \textit{$\mathcal{F}$-functional} and \textit{$\mathcal{W}$-functional},
and the other is an integral quantity called \textit{reduced volume}.
Guo-Philipowski-Thalmaier \cite{GPT2} have introduced the growth condition (\ref{eq:entropy condition}) in Theorem \ref{thm:entropy} inspired by Perelman's entropy functionals (see Remark 1 in \cite{GPT2}).
We aim to produce a reduced geometric counterpart of Theorem \ref{thm:entropy}.
We will prove a Liouville theorem under a growth condition for Perelman's reduced distance.

To state our main result,
we recall some notions on a complete, time-dependent Riemannian manifold $(M,g(\tau))_{\tau \in [0,\infty)}$,
which is not necessarily backward super Ricci flow.
We define
\begin{equation*}\label{eq:hH}
h:=\frac{1}{2}\partial_{\tau}g,\quad H:=\tr h.
\end{equation*}

First,
we briefly recall the notion of reduced distance (we explain contents of this paragraph in more detail in Subsection \ref{sec:Basics of reduced geometry}).
For $(x,\tau)\in M\times (0,\infty)$,
let $L(x,\tau)$ stand for the \textit{$L$-distance} from a space-time base point $(x_0 ,0)$,
which is defined as the infimum of the so-called \textit{$\mathcal{L}$-length} over all curves $\gamma:[0,\tau]\to M$ with $\gamma(0)=x_0$ and $\gamma(\tau)=x$.
Then the \textit{reduced distance $\ell(x,\tau)$} is defined by
\begin{equation*}\label{eq:reduced distance}
\ell(x,\tau):=\frac{1}{2\sqrt{\tau}}L(x,\tau).
\end{equation*}
In the static case of $g(\tau)\equiv g$,
it holds that
\begin{equation}\label{eq:static reduced distance}
\ell(x,\tau)=\frac{d(x)^2}{4\tau}
\end{equation}
for the Riemannian distance $d(x)$ from $x_0$ induced from $g$.
We say that
$(M,g(\tau))_{\tau \in [0,\infty)}$ is \textit{admissible} if
for every $\tau>0$ there is $c_{\tau}\geq 0$ depending only on $\tau$ such that $h \geq -c_{\tau} g$ on $[0,\tau]$.
The admissibility ensures that
the $L$-distance can be achieved by a minimal $\mathcal{L}$-geodesic.

Next,
for a (time-dependent) vector field $V$,
we recall the following \textit{M\"uller quantity} $\mathcal{D}(V)$ (see Definition 1.3 in \cite{M}),
and \textit{trace Harnack quantity} $\mathcal{H}(V)$ (see \cite{H2}, Definition 1.5 in \cite{M}):
\begin{align*}
\mathcal{D}(V)&:=-\partial_{\tau}H-\Delta H-2\Vert h \Vert^2+4\Div h(V)-2g(\nabla H,V)+2\ric(V,V)-2h(V,V),\\
\mathcal{H}(V)&:=-\partial_{\tau} H-\frac{H}{\tau}-2g(\nabla H,V)+2h(V,V).
\end{align*}

\begin{rem}
Along general geometric flow,
several geometric and analytic properties are known to hold under the non-negativity of M\"uller quantity.
For instance,
we possess the monotonicity of $\mathcal{W}$-functional (see Theorem 3.1 in \cite{H} and Theorem 5.2 in \cite{GPT1}),
and that of reduced volume (see Theorem 1.4 in \cite{M} and Theorem 3.4 in \cite{H}).
We also have a differential Harnack inequality of Perelman type (see Corollary 9.3 in \cite{P} and Theorem 1.2 in \cite{CGT}),
that of Cao type (see Theorem 1.3 in \cite{Ca}, Theorem 2.2 in \cite{GH} and Theorem 1.4 in \cite{FZ}),
and that of Cao-Hamilton type (see Theorem 1.1 in \cite{CH}, Theorem 1.4 in \cite{F} and Theorem 3.2 in \cite{GH}).
\end{rem}

Now,
let us consider a backward super Ricci flow $(M,g(\tau))_{\tau \in [0,\infty)}$.
The backward super Ricci flow inequality (\ref{eq:back SRF}) can be written as
\begin{equation}\label{eq:new back SRF}
\ric\geq h.
\end{equation}
The static case of $h=0$ corresponds to the case where $(M,g(\tau))_{\tau \in [0,\infty)}$ is a complete manifold of non-negative Ricci curvature,
and then $H=0$.
The equality case of $h=\ric$ does the case where it is a complete backward Ricci flow,
and then $H$ is equal to the scalar curvature.
Our main result is the following:
\begin{thm}\label{thm:main result}
Let $(M,g(\tau))_{\tau \in [0,\infty)}$ be an admissible, complete backward super Ricci flow.
We assume
\begin{equation}\label{eq:main assumption}
\mathcal{D}(V)\geq 0,\quad \mathcal{H}(V) \geq -\frac{H}{\tau},\quad H\geq 0
\end{equation}
for all vector fields $V$.
Then we have the following:
\begin{enumerate}\setlength{\itemsep}{+0.7mm}
\item Let $u:M\times [0,\infty)\to (0,\infty)$ be a positive solution to backward heat equation.
If
\begin{equation}\label{eq:positive growth}
u(x,\tau)=\exp\left[o\left(\mathfrak{d}(x,\tau)+\sqrt{\tau}\right)\right]
\end{equation}
near infinity, then $u$ is constant.
Here $\mathfrak{d}(x,\tau)$ is defined by
\begin{equation*}
\mathfrak{d}(x,\tau):=\sqrt{4\tau\,\ell(x,\tau)};
\end{equation*}%\label{enum:first}
\item let $u:M\times [0,\infty)\to \mathbb{R}$ be a solution to backward heat equation.
If
\begin{equation}\label{eq:usual growth}
u(x,\tau)=o\left(\mathfrak{d}(x,\tau)+\sqrt{\tau}\right)
\end{equation}
near infinity, then $u$ is constant. %\label{enum:second}
\end{enumerate}
\end{thm}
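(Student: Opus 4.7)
The plan is to derive Theorem~\ref{thm:main result} from a \emph{space-only local gradient estimate} of Souplet--Zhang type, in which the role of Riemannian distance is played by the reduced-geometric function $\mathfrak{d}(x,\tau)=\sqrt{4\tau\,\ell(x,\tau)}$. Writing
$$Q_{R,\tau_{0}}:=\bigl\{(x,\tau):\mathfrak{d}(x,\tau)\leq 2R,\;\tau_{0}/2\leq\tau\leq\tau_{0}\bigr\},$$
the target inequality has the schematic form
$$\frac{\Vert\nabla u\Vert}{u}(x,\tau)\;\leq\;\frac{C}{R}\Bigl(1+\log\tfrac{A}{u(x,\tau)}\Bigr)+\frac{C}{\sqrt{\tau}}$$
on an inner cylinder, for every positive $u\leq A$ solving $(\Delta+\partial_{\tau})u=0$ on $Q_{R,\tau_{0}}$, with $C=C(n)$. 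Granting this, assertion~(1) is immediate: for fixed $(x,\tau)$, the growth hypothesis~\eqref{eq:positive growth} lets one take $A=A(R)$ with $\log A(R)=o(R+\sqrt{\tau_{0}})$, so both terms on the right tend to $0$ as $R\to\infty$, which forces $\Vert\nabla u\Vert(x,\tau)=0$. Since $(x,\tau)$ is arbitrary, $u$ is spatially constant; the backward heat equation $\partial_{\tau}u=-\Delta u$ then gives $\partial_{\tau}u\equiv 0$, so $u$ is globally constant. Assertion~(2) is proved in parallel, replacing the test function adapted to positive solutions by a signed variant such as $f=\Vert\nabla u\Vert^{2}/(2A-u)^{2}$, and invoking~\eqref{eq:usual growth} in place of~\eqref{eq:positive growth}.

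The gradient estimate itself I would obtain by a Bernstein-type maximum-principle argument applied to $f:=\Vert\nabla w\Vert^{2}/(1-w)^{2}$, where $w:=\log(u/A)\leq 0$ satisfies $(\Delta+\partial_{\tau})w=-\Vert\nabla w\Vert^{2}$. A Bochner computation along the evolving metric, with the backward super Ricci flow inequality~\eqref{eq:new back SRF} ($\ric\geq h$) absorbing the curvature correction, yields a parabolic differential inequality of the form
$$(\Delta+\partial_{\tau})f\;\geq\;c\,(1-w)\,f^{2}\;-\;\tfrac{2}{1-w}\,g(\nabla w,\nabla f)\;-\;(\textrm{lower-order terms}).$$
Multiplying $f$ by a space-time cutoff $\eta=\phi(\mathfrak{d}/R)\,\psi(\tau)$ supported in $Q_{R,\tau_{0}}$ and maximizing $\eta f$ produces the stated bound, provided the cutoff errors $\vert(\Delta+\partial_{\tau})\eta\vert$ and $\Vert\nabla\eta\Vert^{2}/\eta$ can be dominated by a constant multiple of $R^{-2}$. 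This is exactly where the hypotheses~\eqref{eq:main assumption} enter: under $\mathcal{D}(V)\geq 0$, $\mathcal{H}(V)\geq -H/\tau$ and $H\geq 0$, Perelman's differential inequalities for the reduced distance survive verbatim along super Ricci flow (compare \cite{M,H}), and translate into one-sided comparisons of the form $(\Delta+\partial_{\tau})\mathfrak{d}\leq C$ and $\Vert\nabla\mathfrak{d}\Vert\leq C$ that are precisely what the cutoff argument consumes.

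The main obstacle I foresee is that $\mathfrak{d}$ is only Lipschitz and is defined through an infimum, so at a maximum point of $\eta f$ lying on the $\mathcal{L}$-cut locus the Laplacian comparison has to be invoked in the barrier sense. I would circumvent this with a Calabi trick, producing a smooth upper barrier for $\mathfrak{d}$ along a minimizing $\mathcal{L}$-geodesic from the base point $(x_{0},0)$ to the maximizer; the existence of such a minimizer is guaranteed by the admissibility hypothesis on $(M,g(\tau))_{\tau\in[0,\infty)}$, which is where that assumption is used in an essential way.
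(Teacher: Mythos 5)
Your proposal follows essentially the same route as the paper: prove a space-only local Souplet--Zhang-type gradient estimate with $\mathfrak{d}$ in place of the Riemannian distance, using the reduced-geometry comparisons for $(\Delta+\partial_\tau)\overline{L}$ (where $\overline{L}=\mathfrak{d}^2$, giving effectively $(\Delta+\partial_\tau)\mathfrak{d}\lesssim n/\mathfrak{d}$ rather than a uniform constant) and for $\Vert\nabla\mathfrak{d}\Vert$ under \eqref{eq:main assumption}, handle $\mathcal{L}$-cut-locus points by a Calabi-type barrier supported by admissibility, and then let $R\to\infty$ using the growth hypotheses. The only minor deviation is in statement (2): the paper does not introduce a separate signed test function but simply reduces to the positive case by applying the gradient estimate to the shifted solution $u+2\sup_{Q_{R,R^2}}|u|$, which is pinched between $\sup|u|$ and $3\sup|u|$.
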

Regarding the assumption for $\mathcal{H}(V)$ in (\ref{eq:main assumption}),
the right hand side and the second term in the definition of $\mathcal{H}(V)$ cancel out each other.
Also,
the non-negativity of $H$ guarantees that of $\ell(x,\tau)$;
in particular,
$\mathfrak{d}(x,\tau)$ is well-defined.

\begin{rem}\label{rem:static Liouville}
In the static case of $h=0$,
Theorem \ref{thm:main result} is reduced to Theorem \ref{thm:SZ}.
Indeed,
we have $H=0,\,\mathcal{D}(V)=\ric(V,V),\,\mathcal{H}(V)=0$,
and hence the admissibility and the assumption (\ref{eq:main assumption}) automatically hold.
Furthermore,
the equality (\ref{eq:static reduced distance}) yields $\mathfrak{d}(x,\tau)=d(x)$.
\end{rem}

\begin{rem}\label{rem:division}
For later convenience,
we divide $\mathcal{D}(V)$ into two parts.
We set
\begin{align}\label{eq:Muller main part}
\mathcal{D}_{0}(V)&:=-\partial_{\tau}H-\Delta H-2\Vert h \Vert^2+4\Div h(V)-2g(\nabla H,V),\\ \label{eq:Muller sRF part}
\mathcal{R}(V)&:=\ric(V,V)-h(V,V)
\end{align}
such that $\mathcal{D}(V)=\mathcal{D}_{0}(V)+2\mathcal{R}(V)$ (cf. Definition 1 in \cite{I}).
The backward super Ricci flow inequality (\ref{eq:new back SRF}) implies $\mathcal{R}(V)\geq 0$.
In particular,
if we have $\mathcal{D}_{0}(V)\geq 0$,
then the assumption $\mathcal{D}(V)\geq 0$ in (\ref{eq:main assumption}) is satisfied.
\end{rem}

We now observe the equality case of $h=\ric$.
Let $(M,g(\tau))_{\tau \in [0,\infty)}$ be a complete backward Ricci flow.
Then we see $\mathcal{D}(V)=0$
since the evolution formula of scalar curvature along Ricci flow tells us that
the sum of the first three terms of $\mathcal{D}_{0}(V)$ vanishes (see e.g., Corollary 4.20 in \cite{AH}),
the contracted second Bianchi identity says that
the remaining part of $\mathcal{D}_{0}(V)$ also does (see e.g., (2.16) in \cite{AH}),
and the equality condition $h=\ric$ leads to $\mathcal{R}(V)=0$.
Additionally,
if we suppose the non-negativity of curvature operator,
then the admissibility and $H\geq 0$ hold.
Moreover,
if we assume its boundedness,
then the Hamilton's trace Harnack inequality for ancient Ricci flow yields the assumption for $\mathcal{H}(V)$ in (\ref{eq:main assumption}) (see Corollary 1.2 in \cite{H2}, and also Theorem \ref{thm:KtrHarnack} and Corollary \ref{cor:ancient KtrHarnack} below).
Thus,
from Theorem \ref{thm:main result} we conclude:
\begin{cor}\label{thm:main cor}
Let $(M,g(\tau))_{\tau \in [0,\infty)}$ be a complete backward Ricci flow with bounded, non-negative curvature operator.
Then we have:
\begin{enumerate}\setlength{\itemsep}{+0.7mm}
\item Let $u:M\times [0,\infty)\to (0,\infty)$ be a positive solution to backward heat equation.
If
\begin{equation*}
u(x,\tau)=\exp\left[o\left(\mathfrak{d}(x,\tau)+\sqrt{\tau}\right)\right]
\end{equation*}
near infinity, then $u$ is constant;
\item let $u:M\times [0,\infty)\to \mathbb{R}$ be a solution to backward heat equation.
If
\begin{equation*}
u(x,\tau)=o\left(\mathfrak{d}(x,\tau)+\sqrt{\tau}\right)
\end{equation*}
near infinity, then $u$ is constant.
\end{enumerate}
\end{cor}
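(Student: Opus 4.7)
My plan is to deduce the corollary by verifying that every hypothesis of Theorem \ref{thm:main result} is a consequence of being a complete backward Ricci flow with bounded, non-negative curvature operator, and then invoking that theorem directly. Since the backward Ricci flow equation gives $h=\ric$ by definition, the inequality (\ref{eq:new back SRF}) holds with equality, so the flow is automatically a backward super Ricci flow and moreover $\mathcal{R}(V)\equiv 0$ in the decomposition of Remark \ref{rem:division}. The non-negative curvature operator assumption implies non-negative Ricci, hence $h\geq 0$ pointwise, which yields both admissibility (with $c_\tau=0$) and $H=\tr h\geq 0$.

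Next I would show $\mathcal{D}(V)=0$, which is stronger than what is needed. Since $\mathcal{R}(V)=0$, it suffices to show $\mathcal{D}_0(V)=0$ by Remark \ref{rem:division}. The first three terms $-\partial_{\tau}H-\Delta H-2\Vert h\Vert^2$ of (\ref{eq:Muller main part}) vanish thanks to the standard evolution equation of scalar curvature along Ricci flow, rewritten in the backward time parameter $\tau$, and the remaining terms $4\Div h(V)-2g(\nabla H,V)$ vanish by the contracted second Bianchi identity $\Div\ric=\frac{1}{2}\nabla H$. Both ingredients are pointwise identities that require no control on the size of curvature.

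The nontrivial ingredient is verifying $\mathcal{H}(V)\geq -H/\tau$, which I expect to be the main conceptual step. I would invoke it in the form of Hamilton's classical trace Harnack inequality for ancient Ricci flow with bounded, non-negative curvature operator (Corollary 1.2 in \cite{H2}, and its formulation later in the paper as Theorem \ref{thm:KtrHarnack} and Corollary \ref{cor:ancient KtrHarnack}). This is the unique place where both the boundedness and the stronger curvature operator assumption (rather than just Ricci non-negativity) are genuinely used; every other hypothesis relies only on pointwise algebraic or identity considerations. Once this Harnack inequality is in hand, all hypotheses of Theorem \ref{thm:main result} are met, and its two conclusions yield statements (1) and (2) of the corollary verbatim.
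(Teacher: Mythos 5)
Your proposal is correct and follows essentially the same route as the paper: check each hypothesis of Theorem~\ref{thm:main result}, with $\mathcal{D}(V)=0$ coming from the scalar curvature evolution plus the contracted second Bianchi identity, admissibility and $H\geq 0$ from non-negative curvature operator, and $\mathcal{H}(V)\geq -H/\tau$ from Hamilton's trace Harnack inequality. Your identification of the Harnack step as the only place requiring boundedness and the full curvature operator hypothesis matches the paper's reasoning exactly.
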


\begin{rem}
From the viewpoint of the works by Bailesteanu-Cao-Pulemotov \cite{BCP}, Zhang \cite{Z}, and Ecker-Knopf-Ni-Topping \cite{EKNT},
it seems to be reasonable to consider growth conditions not for Riemannian distance $d(x,\tau)$ induced from $g(\tau)$ but for $\mathfrak{d}(x,\tau)$
in formulating a Liouville theorem of Souplet-Zhang type for solutions to backward heat equation along backward Ricci flow.
We discuss this topic in Subsection \ref{sec:Discussion}.
\end{rem}

%%%%%%%%%%%%%%%%%%%%%%%%%%%%
\subsection{Gradient estimates}
We here present the key ingredient of the proof of Theorem \ref{thm:main result}.

For that purpose,
we now recall the notion of $K$-super Ricci flow,
which is a time-dependent version of Riemannian manifold whose Ricci curvature is bounded from below by $K$.
For $K\in \mathbb{R}$,
a time-dependent Riemannian manifold $(M,g(t))_{t\in I}$ is called \textit{$K$-super Ricci flow} if
\begin{equation}\label{eq:KKsRF}
\frac{1}{2}\partial_{t}g+\ric \geq K g.
\end{equation}
Here $0$-super Ricci flow is nothing but super Ricci flow.
When the equality in (\ref{eq:KKsRF}) holds,
$(M,g(t))_{t\in I}$ is called \textit{$K$-Ricci flow}.

\begin{rem}
In the context of metric measure geometry,
the following general notion called $(K,N)$-super Ricci flow has been investigated:
For $K\in \mathbb{R}$ and $N\in [n,\infty]$,
a time-dependent weighted Riemannian manifold $(M,g(t),\phi(t))_{t\in I}$ is called \textit{$(K,N)$-super Ricci flow} if
\begin{equation*}\label{eq:sRF}
\frac{1}{2}\partial_{t}g+\ric^{N}_{\phi} \geq K g,
\end{equation*}
where $n$ is the dimension of $M$,
and $\ric^{N}_{\phi}$ denotes the so-called \textit{$N$-Bakry-\'Emery Ricci curvature} associated with the time-dependent density function $\phi(t)$.
We notice that
$(K,N)$-super Ricci flow is a time-dependent version of weighted Riemannian manifold satisfying the condition that $N$-Bakry-\'Emery Ricci curvature is bounded from below by $K$,
which is equivalent to the so-called \textit{curvature-dimension condition $\CD(K,N)$} in the sense of Sturm \cite{S1}, \cite{S2} and Lott-Villani \cite{LV}.
It seems that
the notion of $(K,N)$-super Ricci flow firstly appeared in the work of Arnaudon-Coulibaly-Thalmaier \cite{ACT} (see Theorem 4.1 (b) in \cite{ACT}),
where they only consider the case of $N=\infty$.
After their work,
it has been studied by Sturm \cite{S}, Kopfer-Sturm \cite{KS}, Kopfer \cite{K},
or in a series of works done by Li-Li \cite{LL1}, \cite{LL2}, \cite{LL3}, \cite{LL4}.
\end{rem}

We now consider a backward $K$-super Ricci flow $(M,g(\tau))_{\tau \in [0,\infty)}$,
namely,
\begin{equation}\label{eq:KsRF}
\ric \geq h+K g.
\end{equation}
The backward $K$-super Ricci flow inequality (\ref{eq:KsRF}) leads to $\mathcal{R}(V)\geq K \Vert V\Vert^2$.
Theorem \ref{thm:main result} is a direct consequence of the following space-only local gradient estimate:
\begin{thm}\label{thm:gradient estimate}
For $K\geq 0$,
let $(M,g(\tau))_{\tau \in [0,\infty)}$ be an $n$-dimensional, admissible, complete backward $(-K)$-super Ricci flow.
We assume 
\begin{equation}\label{eq:Kmain assumption}
\mathcal{D}(V)\geq -2K\left(H+\Vert V\Vert^2   \right),\quad \mathcal{H}(V) \geq -\frac{H}{\tau},\quad H\geq 0
\end{equation}
for all vector fields $V$.
Let $u:M \times [0,\infty) \to (0,\infty)$ stand for a positive solution to backward heat equation.
For $R,T>0$ and $A>0$,
we suppose $u\leq A$ on
\begin{equation*}
Q_{R,T}:=\left\{\,(x,\tau)\in M\times \left(0,T\right] \,\,\, \middle|\,\,\, \mathfrak{d}(x,\tau)\leq R  \,\right\}.
\end{equation*}
Then there exists a positive constant $C_{n}>0$ depending only on $n$ such that on $Q_{R/2,T/4}$,
\begin{equation*}
\frac{\Vert \nabla u \Vert}{u}\leq C_n \left( \frac{1}{R}+\frac{1}{\sqrt{T}}+\sqrt{K}  \right) \left( 1+\log \frac{A}{u}  \right).
\end{equation*}
\end{thm}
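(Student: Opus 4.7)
My plan is to adapt the localized Souplet-Zhang gradient estimate to the backward super Ricci flow setting, performing the spatial localization with a cutoff built from $\mathfrak{d}$ rather than from Riemannian distance; the hypotheses on $\mathcal{D}$, $\mathcal{H}$ and $H$ in (\ref{eq:Kmain assumption}) will feed into reduced-distance estimates that make this cutoff effective. Since $0<u\leq A$ on $Q_{R,T}$, I set $f:=\log(u/A)\leq 0$, so that the backward heat equation becomes $(\Delta+\partial_\tau)f=-\Vert\nabla f\Vert^{2}$, and put
\begin{equation*}
w:=\frac{\Vert\nabla f\Vert^{2}}{(1-f)^{2}}.
\end{equation*}
Combining the Bochner formula with the time-dependent identity $\partial_\tau g^{ij}=-2h^{ij}$ gives
\begin{equation*}
(\Delta+\partial_\tau)\Vert\nabla f\Vert^{2}=2\Vert\Hess f\Vert^{2}-2\langle\nabla f,\nabla\Vert\nabla f\Vert^{2}\rangle+2(\ric-h)(\nabla f,\nabla f).
\end{equation*}
Using $\ric-h\geq -Kg$ from (\ref{eq:KsRF}) and the standard chain-rule calculation for $\Vert\nabla f\Vert^{2}/(1-f)^{2}$ one obtains a Souplet-Zhang type inequality
\begin{equation*}
(\Delta+\partial_\tau)w\;\geq\;2(1-f)w^{2}-X\cdot\nabla w-2Kw,
\end{equation*}
where $X$ is a vector field controlled by $\Vert\nabla f\Vert$. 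Note that the $\mathcal{D}$- and $\mathcal{H}$-hypotheses play no role at this stage; they enter only through the cutoff.

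For the localization I pick smooth $\eta,\chi:[0,\infty)\to[0,1]$ with $\eta\equiv 1$ on $[0,1/2]$, $\supp\eta\subset[0,1]$, $\chi\equiv 1$ on $[0,1/4]$, $\supp\chi\subset[0,1)$, $\chi$ non-increasing, and satisfying the usual derivative bounds, and set
\begin{equation*}
\psi(x,\tau):=\eta\!\left(\mathfrak{d}(x,\tau)/R\right)\chi(\tau/T).
\end{equation*}
Under admissibility and (\ref{eq:Kmain assumption}), Perelman-M\"uller type computations along $\mathcal{L}$-geodesics should yield viscosity bounds
\begin{equation*}
\Vert\nabla\mathfrak{d}\Vert\leq C_n,\qquad \Delta\mathfrak{d}\leq C_n\!\left(\tfrac{1}{\mathfrak{d}}+\sqrt{K}\right),\qquad \vert\partial_\tau\mathfrak{d}\vert\leq C_n\!\left(\tfrac{\mathfrak{d}}{\tau}+\sqrt{K}\,\mathfrak{d}\right)
\end{equation*}
on $\{\mathfrak{d}\leq R\}$: the condition $H\geq 0$ keeps $\ell$ and hence $\mathfrak{d}$ non-negative, the lower bound on $\mathcal{D}$ controls the second $\mathcal{L}$-variation and therefore $\Delta\ell$, and the trace-Harnack bound on $\mathcal{H}$ controls $\partial_\tau\ell$. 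These in turn propagate to the cutoff estimates
\begin{equation*}
\tfrac{\Vert\nabla\psi\Vert^{2}}{\psi}\leq \tfrac{C_n}{R^{2}},\quad \Delta\psi\geq -C_n\!\left(\tfrac{1}{R^{2}}+\tfrac{\sqrt{K}}{R}\right),\quad \vert\partial_\tau\psi\vert\leq \tfrac{C_n}{T}+\tfrac{C_n}{R}\!\left(\tfrac{1}{\sqrt{\tau}}+\sqrt{K}\right).
\end{equation*}

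Since $\psi w$ is non-negative and vanishes on $\{\mathfrak{d}=R\}\cup\{\tau=T\}$, its supremum over $\bar Q_{R,T}$ is attained at some $(x_0,\tau_0)$, where $\nabla(\psi w)=0$ and $(\Delta+\partial_\tau)(\psi w)\leq 0$ (this last inequality remains valid when the maximum occurs at $\tau_0=0$, since then $\partial_\tau(\psi w)\leq 0$). Expanding
\begin{equation*}
(\Delta+\partial_\tau)(\psi w)=\psi(\Delta+\partial_\tau)w+w(\Delta+\partial_\tau)\psi+2\langle\nabla\psi,\nabla w\rangle,
\end{equation*}
substituting the lower bound for $(\Delta+\partial_\tau)w$, using $\nabla w=-(w/\psi)\nabla\psi$ to eliminate the cross terms, and absorbing via Young's inequality with the cutoff bounds, I expect to arrive at
\begin{equation*}
(1-f)(\psi w)(x_0,\tau_0)\leq C_n\!\left(\tfrac{1}{R^{2}}+\tfrac{1}{T}+K\right).
\end{equation*}
Since $1-f\geq 1$ and $\psi\equiv 1$ on $Q_{R/2,T/4}$, this bound for $(\psi w)$ at its maximum transfers to a pointwise bound for $w$ on $Q_{R/2,T/4}$, and together with $\Vert\nabla u\Vert/u=\sqrt{w}\,(1+\log(A/u))$ gives exactly the theorem's estimate.

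The main obstacle is the reduced-geometry step: deriving the three bounds on $\mathfrak{d}$ from the precise $K$-deficit hypotheses (\ref{eq:Kmain assumption}). In the static case $h\equiv 0$ they reduce to the classical Laplacian comparison under $\ric\geq -Kg$, and in the Ricci flow case $h=\ric$ they are essentially Perelman's; for a general admissible backward $(-K)$-super Ricci flow, they will require a careful reworking of the Perelman-M\"uller evolution identities for $L$ and $\ell$ along $\mathcal{L}$-Jacobi fields, tracking how the deficit $-2K(H+\Vert V\Vert^{2})$ in $\mathcal{D}$ and the correction $\mathcal{H}\geq -H/\tau$ pass through the second-variation formula so as to produce exactly the additive $\sqrt{K}$ contribution claimed without introducing spurious $\tau$-dependence.
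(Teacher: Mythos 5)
Your overall architecture matches the paper's: reduce to $A=1$, set $f=\log u$ and $w=\Vert\nabla f\Vert^2/(1-f)^2$, derive the Souplet--Zhang differential inequality for $w$, localize with a cutoff built from $\mathfrak{d}$ rather than Riemannian distance, and run a maximum-principle argument. The Bochner computation and the remark that the $\mathcal{D}$- and $\mathcal{H}$-hypotheses enter only through the cutoff are both correct and agree with Lemmas 4.1--4.2 of the paper.

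However, there is a genuine gap in the reduced-geometry step, and it is exactly the ``spurious $\tau$-dependence'' you flag as a worry at the end. You propose to bound $\Delta\mathfrak{d}$, $\Vert\nabla\mathfrak{d}\Vert$ and $\vert\partial_\tau\mathfrak{d}\vert$ \emph{separately}, but an upper bound on $\partial_\tau\mathfrak{d}$ is not obtainable from the hypotheses. Indeed, Perelman's first-variation formula gives
\begin{equation*}
\partial_\tau \ell = H-\frac{\ell}{\tau}+\frac{1}{2\tau^{3/2}}\mathcal{K}_{\mathcal{H}},
\end{equation*}
and the assumption $\mathcal{H}(V)\geq -H/\tau$ yields only a \emph{lower} bound on $\mathcal{K}_{\mathcal{H}}$ (and hence on $\partial_\tau\ell$); an upper bound would require an upper bound on $H$, which the hypotheses $H\geq 0$, $\mathcal{D}(V)\geq -2K(H+\Vert V\Vert^2)$ do not provide. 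Likewise the term $-\frac{1}{2\tau^{3/2}}\mathcal{K}_{\mathcal{H}}$ in the Laplacian comparison for $\ell$ is only controlled from below, so $\Delta\mathfrak{d}$ alone picks up an extra $\mathfrak{d}/\tau$ contribution. The device that makes the argument close is to bound the \emph{sum} $(\Delta+\partial_\tau)\overline{L}$ (with $\overline{L}=\mathfrak{d}^2$) as a single object: the $\pm H$, $\pm\ell/\tau$ and $\pm\tau^{-3/2}\mathcal{K}_{\mathcal{H}}$ terms cancel, leaving $(\Delta+\partial_\tau)\overline{L}\leq 2n-\frac{2}{\sqrt{\tau}}\mathcal{K}_{\mathcal{D}}\leq 2n+2K\overline{L}$, which feeds cleanly into the estimate of $-w(\Delta+\partial_\tau)\psi$ after writing $(\Delta+\partial_\tau)\mathfrak{d}=\frac{1}{2\mathfrak{d}}(\Delta+\partial_\tau)\overline{L}-\frac{\Vert\nabla\overline{L}\Vert^2}{4\mathfrak{d}^3}$ and using $\partial_r\psi\leq 0$ to discard the last term. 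You should replace your three separate bounds with this combined heat-operator bound on $\overline{L}$ together with $\Vert\nabla\mathfrak{d}\Vert^2\leq 3$.

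Two smaller issues: (i) the reduced distance $\ell$ is only locally Lipschitz, so the maximum of $\psi w$ may occur where $\ell$ is not differentiable; the paper handles this with M\"uller's smooth upper-barrier construction, which replaces $\mathfrak{d}$ by a smooth barrier $\widehat{\mathfrak{d}}\geq\mathfrak{d}$ agreeing at the maximum point, and you should do the same. (ii) $\ell(x,\tau)$ is undefined at $\tau=0$, so one works on $Q_{R,T,\theta}=\{\tau\geq\theta\}$ and lets $\theta\to 0$ at the end; your remark about the case $\tau_0=0$ is therefore moot but also ill-posed as stated.
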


In the static case of $h=0$,
Theorem \ref{thm:gradient estimate} has been established by Souplet-Zhang \cite{SZ} (see Theorem 1.1 in \cite{SZ}, and cf. Remark \ref{rem:static Liouville}).
We here recall that
the gradient estimate in \cite{SZ} is a parabolic version of the local gradient estimate of Cheng-Yau \cite{CY} for harmonic functions,
and also inspired by the space-only global gradient estimate by Hamilton \cite{H3}.

\begin{rem}
By the backward $(-K)$-super Ricci flow inequality (\ref{eq:KsRF}),
if we have $\mathcal{D}_{0}(V)\geq -2KH$,
then the assumption for $\mathcal{D}(V)$ in (\ref{eq:Kmain assumption}) holds (cf. Remark \ref{rem:division}).
\end{rem}

In Section \ref{sec:KtrHge},
we examine the trace Harnack inequality for $K$-Ricci flow,
and derive the following result from Theorem \ref{thm:gradient estimate} in the same manner as Corollary \ref{thm:main cor}:
\begin{cor}\label{cor:grad cor}
For $K \geq 0$,
let $(M,g(\tau))_{\tau \in [0,\infty)}$ be an $n$-dimensional,
complete backward $(-K)$-Ricci flow with bounded, non-negative curvature operator.
Let $u:M \times [0,\infty) \to (0,\infty)$ be a positive solution to backward heat equation.
For $R,T>0$ and $A>0$,
we suppose $u\leq A$ on $Q_{R,T}$.
Then there is a positive constant $C_{n}>0$ depending only on $n$ such that on $Q_{R/2,T/4}$,
\begin{equation*}
\frac{\Vert \nabla u \Vert}{u}\leq C_n \left( \frac{1}{R}+\frac{1}{\sqrt{T}}+\sqrt{K}  \right) \left( 1+\log \frac{A}{u}  \right).
\end{equation*}
\end{cor}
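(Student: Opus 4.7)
The plan is to reduce Corollary \ref{cor:grad cor} to Theorem \ref{thm:gradient estimate} by verifying its four structural hypotheses (admissibility, $\mathcal{D}(V) \geq -2K(H+\Vert V\Vert^2)$, $\mathcal{H}(V) \geq -H/\tau$, and $H \geq 0$) directly from the backward $(-K)$-Ricci flow identity $h = \ric + Kg$ together with the assumed curvature bounds. The strategy mirrors the derivation of Corollary \ref{thm:main cor} from Theorem \ref{thm:main result}, with every curvature quantity now carrying an explicit $K$-correction, and with the Hamilton-type trace Harnack inequality replaced by its $K$-version.

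I would first dispose of the easy hypotheses. From $h = \ric + Kg$ one gets $H = \Sc + nK$, so non-negativity of the curvature operator (which forces $\Sc \geq 0$) together with $K \geq 0$ yields $H \geq 0$. Boundedness and non-negativity of the curvature operator also make $\ric$ bounded and non-negative, hence $h \geq -Kg$ uniformly, establishing admissibility with a constant $c_\tau$ independent of $\tau$.

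The main computation is to pin down $\mathcal{D}(V)$. Splitting $\mathcal{D}(V) = \mathcal{D}_0(V) + 2\mathcal{R}(V)$ as in Remark \ref{rem:division}, the super-Ricci part gives $\mathcal{R}(V) = -K\Vert V\Vert^2$ at once from $\ric - h = -Kg$. For the principal part $\mathcal{D}_0(V)$ defined in (\ref{eq:Muller main part}), the substitution $h = \ric + Kg$ gives $\Div h = \frac{1}{2}\nabla \Sc$ since $\Div(Kg) = 0$, so the two terms $4\Div h(V)$ and $-2g(\nabla H, V)$ cancel, while $\Vert h\Vert^2 = |\ric|^2 + 2K\Sc + nK^2$. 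Combining with the general evolution formula $\partial_\tau \Sc = -2\Delta H + 2\Div(\Div h) - 2\langle h, \ric \rangle$ for any flow $\partial_\tau g = 2h$, which in our situation reduces to $\partial_\tau \Sc = -\Delta \Sc - 2|\ric|^2 - 2K\Sc$, a short algebraic simplification yields $\mathcal{D}_0(V) = -2KH$ and hence $\mathcal{D}(V) = -2K(H+\Vert V\Vert^2)$. The hypothesis on $\mathcal{D}(V)$ in (\ref{eq:Kmain assumption}) is therefore attained with equality.

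The remaining bound $\mathcal{H}(V) \geq -H/\tau$ is the real obstacle and the reason that bounded non-negative curvature operator must be assumed. Unlike the previous items, it is not an algebraic identity but a global-in-$\tau$ inequality, and it is precisely here that the ancient nature of the flow must be used. I would quote Corollary \ref{cor:ancient KtrHarnack} established in Section \ref{sec:KtrHge}, which extends Hamilton's ancient-flow trace Harnack inequality to the $(-K)$-Ricci flow setting under the same curvature hypotheses. With this fourth hypothesis secured, Theorem \ref{thm:gradient estimate} applies and produces the claimed local gradient estimate with the same dimensional constant $C_n$.
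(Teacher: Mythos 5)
Your proposal is correct and follows essentially the same route as the paper: verify the four hypotheses of Theorem \ref{thm:gradient estimate} using $h=\ric+Kg$, $H=S+nK$, the algebraic identity $\mathcal{D}(V)=-2K(H+\Vert V\Vert^2)$ (which the paper isolates as Lemma \ref{lem:KFRDV}, and you re-derive inline via the scalar-curvature evolution formula and the contracted Bianchi identity), and the ancient $K$-Ricci flow trace Harnack inequality of Corollary \ref{cor:ancient KtrHarnack}. The only differences are cosmetic: the paper cites Lemma \ref{lem:KFRDV} whereas you reproduce its computation, and your admissibility bound $h\geq -Kg$ could even be sharpened to $h\geq 0$ since $\ric\geq 0$.
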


In Section \ref{sec:Comparisons with other},
we compare our gradient estimate with other space-only local gradient estimates obtained by Bailesteanu-Cao-Pulemotov \cite{BCP}, Zhang \cite{Z}, Ecker-Knopf-Ni-Topping \cite{EKNT},
and discuss the advantage of our work.

%%%%%%%%%%%%%%%%%%%%%%%%%%%%
%%%%%%%%%%%%%%%%%%%%%%%%%%%%
%%%%%%%%%%%%%%%%%%%%%%%%%%%%
\section{Reduced geometry}\label{sec:Reduced geometry}
To capture the behavior of Ricci flow,
Perelman \cite{P} has introduced the reduced geometry,
which is the comparison geometry on his reduced distance.
This section is devoted to review and study on reduced geometry.
Throughout this subsection,
let $(M,g(\tau))_{\tau \in [0,\infty)}$ stand for an $n$-dimensional, complete time-dependent Riemannian manifold.

%%%%%%%%%%%%%%%%%%%%%%%%%%%%
\subsection{Basics of reduced geometry}\label{sec:Basics of reduced geometry}
We first review basics of reduced geometry.
The references are \cite{CCGG}, \cite{M}, \cite{P}, \cite{Ye}, \cite{Y1}, \cite{Y2}.
We mainly refer to the works of Yokota \cite{Y1}, \cite{Y2},
which is compatible with our setting.

We start with giving the precise definition of reduced distance.
For a curve $\gamma:[\tau_1,\tau_2]\to M$,
its \textit{$\mathcal{L}$-length} is defined as
\begin{equation*}
\mathcal{L}(\gamma):=\int^{\tau_2}_{\tau_1}\sqrt{\tau}\left( H +\left\Vert \frac{d\gamma}{d\tau} \right\Vert^{2} \right)\,d\tau.
\end{equation*}
Notice that
its critical point over all curves with fixed endpoints is known to be characterized by the following \textit{$\mathcal{L}$-geodesic equation}:
\begin{equation*}\label{geod}
X:=\frac{d\gamma}{d\tau},\quad \nabla_X X - \frac{1}{2}\nabla H+ \frac{1}{2\tau}X + 2h(X) =0.
\end{equation*}
For $(x,\tau)\in M\times (0,\infty)$,
the \textit{$L$-distance} $L(x,\tau)$ and \textit{reduced distance} $\ell(x,\tau)$ from a space-time base point $(x_0,0)$ are defined by
\begin{equation}\label{eq:L and l}
L(x,\tau):=\inf_{\gamma}\mathcal{L}(\gamma),\quad \ell(x,\tau):=\frac{1}{2\sqrt{\tau}}L(x,\tau),
\end{equation}
where the infimum is taken over all curves $\gamma:[0,\tau]\to M$ with $\gamma(0)=x_0$ and $\gamma(\tau)=x$.
A curve is called \textit{minimal $\mathcal{L}$-geodesic} from $(x_0,0)$ to $(x,\tau)$ if it attains the infimum of (\ref{eq:L and l}).

We next discuss the admissibility introduced in Subsection \ref{sec:Liouville theorems}.
Recall that
$(M,g(\tau))_{\tau \in [0,\infty)}$ is called \textit{admissible} if
for each $\tau>0$ there is $c_{\tau}\geq 0$ depending only on $\tau$ such that $h \geq -c_{\tau} g$ on $[0,\tau]$.
Note that $h=\ric$ if it is backward Ricci flow.
Perelman \cite{P} has developed reduced geometry under uniform boundedness of sectional curvature.
Ye \cite{Ye} has shown that
many fundamental parts of Perelman's reduced geometry, including $\mathcal{L}$-geodesic theory, still hold under a lower Ricci curvature bound.
Furthermore,
Yokota \cite{Y1}, \cite{Y2} has pointed out that
for general geometric flow that is not necessarily (backward) Ricci flow,
the admissibility ensures such fundamental properties with the same proof as in \cite{Ye} (cf. Section 2 in \cite{Y1}).

We assume that
$(M,g(\tau))_{\tau \in [0,\infty)}$ is admissible.
Then for every $(x,\tau)\in M\times (0,\infty)$,
there is at least one minimal $\mathcal{L}$-geodesic (see Proposition 2.8 in \cite{Ye}).
Also,
the functions $L(\cdot,\tau)$ and $L(x,\cdot)$ are locally Lipschitz in $(M,g(\tau))$ and $(0,\infty)$,
respectively (see Propositions 2.12, 2.13, and also 2.14 in \cite{Ye});
in particular,
they are differentiable almost everywhere.

Let us state derivative formulas for reduced distance.
In the rest of this section,
we assume that
$\ell$ is smooth at $(\overline{x},\overline{\tau})\in M\times (0,\infty)$.
For such a point,
the minimal $\mathcal{L}$-geodesic is uniquely determined,
and its tangent vector $\overline{X}$ coincides with the gradient of $\ell$ (see Lemma 2.11 in \cite{Ye}).
We possess the following (see Subsection 2.3 in \cite{Y1}):
\begin{prop}\label{prop:derivative formula}
At $(\overline{x},\overline{\tau})$ we have
\begin{align}\label{eq:time ell}
\partial_\tau \ell &=H-\frac{\ell}{\overline{\tau}}+\frac{1}{2\overline{\tau}^{3/2}}\mathcal{K}_{\mathcal{H}},\\ \label{eq:grad ell}
\Vert \nabla \ell \Vert^2&=-H+\frac{\ell}{\overline{\tau}}-\frac{1}{\overline{\tau}^{3/2}}\mathcal{K}_{\mathcal{H}},\\ \label{eq:Laplace ell}
\Delta \ell &\leq -H+\frac{n}{2\overline{\tau}}-\frac{1}{2\overline{\tau}^{3/2}}\mathcal{K}_{\mathcal{H}}-\frac{1}{2\overline{\tau}^{3/2}}\mathcal{K}_{\mathcal{D}},
\end{align}
where
\begin{equation*}
\mathcal{K}_{\mathcal{H}}:=\int^{\overline{\tau}}_{0}\,\tau^{3/2}\,\mathcal{H}(\overline{X})\,d\tau,\quad \mathcal{K}_{\mathcal{D}}:=\int^{\overline{\tau}}_{0}\,\tau^{3/2}\,\mathcal{D}(\overline{X})\,d\tau.
\end{equation*}
\end{prop}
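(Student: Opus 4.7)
The plan is to derive all three formulas at the smooth point $(\overline{x},\overline{\tau})$ from the first and second variation of $\mathcal{L}$-length, combined with the $\mathcal{L}$-geodesic equation. By hypothesis $\ell$ is smooth there, so the minimal $\mathcal{L}$-geodesic $\gamma:[0,\overline{\tau}]\to M$ with $\gamma(\overline{\tau})=\overline{x}$ is unique, and its terminal tangent $\overline{X}:=(d\gamma/d\tau)(\overline{\tau})$ equals $\nabla\ell(\overline{x},\overline{\tau})$, as the excerpt already records.

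I would begin with (\ref{eq:grad ell}). The first variation of $\mathcal{L}$-length, together with the vanishing of the Euler--Lagrange term by the $\mathcal{L}$-geodesic equation, gives $\nabla L=2\sqrt{\overline{\tau}}\,\overline{X}$ and hence $\Vert\nabla\ell\Vert^{2}=\Vert\overline{X}\Vert^{2}$. To re-express $\Vert\overline{X}\Vert^{2}$ in the asserted form, differentiate $\Vert X\Vert^{2}_{g(\tau)}$ along $\gamma$ using $\partial_{\tau}g=2h$ and the $\mathcal{L}$-geodesic equation $\nabla_{X}X=(1/2)\nabla H-(1/(2\tau))X-2h(X,\cdot)^{\sharp}$. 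A routine manipulation then produces the key identity
\[
\frac{d}{d\tau}\bigl[\tau^{3/2}(H+\Vert X\Vert^{2})\bigr]=\tfrac{1}{2}\sqrt{\tau}\,(H+\Vert X\Vert^{2})-\tau^{3/2}\mathcal{H}(X).
\]
Integrating from $0$ to $\overline{\tau}$ and using $2\sqrt{\overline{\tau}}\,\ell=L=\int_{0}^{\overline{\tau}}\sqrt{s}(H+\Vert X\Vert^{2})\,ds$ yields $H+\Vert\overline{X}\Vert^{2}=\ell/\overline{\tau}-\mathcal{K}_{\mathcal{H}}/\overline{\tau}^{3/2}$, which is (\ref{eq:grad ell}).

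For (\ref{eq:time ell}) I would use the relation $L(\gamma(\tau),\tau)=\int_{0}^{\tau}\sqrt{s}(H+\Vert X\Vert^{2})\,ds$ along the minimal geodesic and differentiate at $\tau=\overline{\tau}$. The chain rule combined with $\nabla L=2\sqrt{\overline{\tau}}\,\overline{X}$ gives $\sqrt{\overline{\tau}}(H+\Vert\overline{X}\Vert^{2})=\partial_{\tau}L+2\sqrt{\overline{\tau}}\Vert\overline{X}\Vert^{2}$, whence $\partial_{\tau}L=\sqrt{\overline{\tau}}(H-\Vert\overline{X}\Vert^{2})$. Inserting this into $\partial_{\tau}\ell=\partial_{\tau}L/(2\sqrt{\overline{\tau}})-\ell/(2\overline{\tau})$ and substituting the gradient formula just derived produces (\ref{eq:time ell}) after elementary simplification.

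The Laplacian estimate (\ref{eq:Laplace ell}) relies on the second variation of $\mathcal{L}$-length and is the main obstacle. For a $g(\overline{\tau})$-orthonormal basis $\{e_{i}\}_{i=1}^{n}$ of $T_{\overline{x}}M$, I would extend each $e_{i}$ along $\gamma$ to a vector field $Y_{i}$ by solving the modified parallel transport equation $\nabla_{X}Y_{i}+h(Y_{i},\cdot)^{\sharp}=0$ with $Y_{i}(\overline{\tau})=e_{i}$, and then rescale by $\sqrt{\tau/\overline{\tau}}$ so that $Y_{i}(0)=0$. Plugging these admissible variations into the second variation formula, summing over $i$, and reorganizing the interior integrand should package the curvature and $h$ contributions precisely into $-\sqrt{\tau}\,\mathcal{D}(X)-(d/d\tau)(\tau^{3/2}\mathcal{H}(X))$, together with the $n/\sqrt{\tau}$ term coming from the $\sqrt{\tau/\overline{\tau}}$ rescaling. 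After integration in $\tau$ and division by $2\sqrt{\overline{\tau}}$, the asserted bound on $\Delta\ell$ emerges. The delicate step, where I would follow Yokota \cite{Y1} rather than attempt a fresh derivation, is the choice of the modified parallel transport and the bookkeeping that assembles exactly $\mathcal{K}_{\mathcal{H}}$ and $\mathcal{K}_{\mathcal{D}}$ — with no stray terms — in the final inequality.
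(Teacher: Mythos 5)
Your proposal is correct and follows essentially the same route as the paper, which itself defers to the standard calculations in \cite{CCGG} (proofs of (7.88)--(7.90)) and \cite{Y1}. The differential identity you derive for $\tau^{3/2}(H+\Vert X\Vert^2)$ is exactly the integrand manipulation that produces $\mathcal{K}_{\mathcal{H}}$, and your chain-rule derivation of (\ref{eq:time ell}) from $\partial_\tau L=\sqrt{\overline\tau}(H-\Vert\overline X\Vert^2)$ together with (\ref{eq:grad ell}), and your sketch of (\ref{eq:Laplace ell}) via the second variation with the $h$-modified parallel transport rescaled by $\sqrt{\tau/\overline\tau}$, match the intended argument; the level of detail you defer to \cite{Y1} for the Laplacian bookkeeping is the same level of detail the paper defers.
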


\begin{rem}
The estimates (\ref{eq:time ell}), (\ref{eq:grad ell}) have been stated in \cite{Y1}.
We can check them by the same calculations as that in the proof of (7,88), (7.89) in \cite{CCGG} for backward Ricci flow.
Also,
a similar Laplacian comparison to (\ref{eq:Laplace ell}) has been done in \cite{Y1}.
But in \cite{Y1}, its last term did not appear since he assumed the non-negativity of $\mathcal{D}_{0}(V)$ and $\mathcal{R}(V)$ defined as (\ref{eq:Muller main part}) and (\ref{eq:Muller sRF part}), which implies that of $\mathcal{D}(V)$, at first (see Assumption 2.1 in \cite{Y1}).
One can verify (\ref{eq:Laplace ell}) by carefully following the calculations in the proof of (7.90) in \cite{CCGG}.
\end{rem}

\begin{rem}\label{rem:barrier}
For general geometric flow,
M\"uller \cite{M} has obtained similar inequalities to (\ref{eq:time ell}), (\ref{eq:grad ell}), (\ref{eq:Laplace ell}) under compactness of $M$ (see (5.18) in \cite{M}).
Here
he has studied both forward and backward reduced distances in a unified way,
and the backward one corresponds to our reduced distance.
Moreover,
he has shown that
even if $\ell$ is not smooth at $(\overline{x},\overline{\tau})$,
they hold in the barrier sense (see Lemma 5.3 in \cite{M}).
By using the same barrier function constructed in the proof of Lemma 5.3 in \cite{M},
we can also conclude that
even if $\ell$ is not smooth at $(\overline{x},\overline{\tau})$,
the inequalities (\ref{eq:time ell}), (\ref{eq:grad ell}), (\ref{eq:Laplace ell}) hold in the barrier sense.
More precisely,
there exist a sufficiently small $\delta>0$,
an open neighborhood $U$ of $\overline{x}$ in $M$,
and a smooth upper barrier function $\widehat{\ell}$ at $(\overline{x},\overline{\tau})$ on $U\times (\overline{\tau}-\delta,\overline{\tau}+\delta)$ (i.e., $\widehat{\ell}\geq \ell$, and the equality holds at $(\overline{x},\overline{\tau})$) such that
$\widehat{\ell}$ satisfies (\ref{eq:time ell}), (\ref{eq:grad ell}), (\ref{eq:Laplace ell}) at $(\overline{x},\overline{\tau})$.
\end{rem}

For $(x,\tau)\in M\times (0,\infty)$ we define
\begin{equation*}
\overline{L}(x,\tau):=4\tau \,\ell(x,\tau)=\mathfrak{d}(x,\tau)^2.
\end{equation*}
Summing up (\ref{eq:time ell}) and (\ref{eq:Laplace ell}) yields the following (see Section 2 in \cite{Y2}):
\begin{lem}\label{lem:reduced heat estimate}
At $(\overline{x},\overline{\tau})$ we have
\begin{equation*}
(\Delta+\partial_\tau)\overline{L} \leq 2n-\frac{2}{\sqrt{\overline{\tau}}}\mathcal{K}_{\mathcal{D}}.
\end{equation*}
\end{lem}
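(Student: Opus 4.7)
The plan is to prove the lemma by direct computation, applying Proposition \ref{prop:derivative formula} term by term to $\overline{L}=4\tau\ell$.

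First I would differentiate $\overline{L}$ in time using the product rule, which gives $\partial_\tau \overline{L} = 4\ell + 4\overline{\tau}\,\partial_\tau \ell$ at $(\overline{x},\overline{\tau})$. Substituting the formula (\ref{eq:time ell}) from Proposition \ref{prop:derivative formula},
\[
\partial_\tau \overline{L} = 4\ell + 4\overline{\tau}\left(H - \frac{\ell}{\overline{\tau}} + \frac{1}{2\overline{\tau}^{3/2}}\mathcal{K}_{\mathcal{H}}\right) = 4\overline{\tau}\,H + \frac{2}{\sqrt{\overline{\tau}}}\mathcal{K}_{\mathcal{H}},
\]
so the two $\ell$-terms cancel. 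Next I would compute $\Delta \overline{L} = 4\overline{\tau}\,\Delta \ell$ and apply the Laplacian comparison (\ref{eq:Laplace ell}) to get
\[
\Delta \overline{L} \leq 4\overline{\tau}\left(-H + \frac{n}{2\overline{\tau}} - \frac{1}{2\overline{\tau}^{3/2}}\mathcal{K}_{\mathcal{H}} - \frac{1}{2\overline{\tau}^{3/2}}\mathcal{K}_{\mathcal{D}}\right) = -4\overline{\tau}\,H + 2n - \frac{2}{\sqrt{\overline{\tau}}}\mathcal{K}_{\mathcal{H}} - \frac{2}{\sqrt{\overline{\tau}}}\mathcal{K}_{\mathcal{D}}.
\]

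Summing the two expressions, the $4\overline{\tau}\,H$ terms cancel and the $\mathcal{K}_{\mathcal{H}}$ contributions also cancel, leaving exactly $2n - 2\overline{\tau}^{-1/2}\mathcal{K}_{\mathcal{D}}$, which is the stated bound. There is no serious obstacle here: the content of the lemma lies entirely in Proposition \ref{prop:derivative formula}, and this step is simply the observation that the chosen normalization $\overline{L}=4\tau\ell$ is precisely the one in which the $H$-terms (forced to appear in both $\partial_\tau \ell$ and $\Delta \ell$ by the $\mathcal{L}$-length functional) cancel upon addition, leaving a clean heat-type inequality in terms of the M\"uller integral $\mathcal{K}_{\mathcal{D}}$. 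If $\ell$ is not smooth at $(\overline{x},\overline{\tau})$, one interprets the inequality in the barrier sense, using the upper barrier function described in Remark \ref{rem:barrier}, which satisfies (\ref{eq:time ell}) and (\ref{eq:Laplace ell}) and therefore also the asserted inequality.
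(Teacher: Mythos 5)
Your proposal is correct and takes essentially the same approach as the paper, which simply states that the lemma follows by ``summing up'' the time-derivative formula (\ref{eq:time ell}) and the Laplacian comparison (\ref{eq:Laplace ell}); you have filled in the routine product-rule bookkeeping showing how the $\ell$, $H$, and $\mathcal{K}_{\mathcal{H}}$ terms cancel. The closing remark about the barrier interpretation in the non-smooth case is accurate and consistent with Remark \ref{rem:barrier}.
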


%%%%%%%%%%%%%%%%%%%%%%%%%%%%
\subsection{Key estimates}\label{sec:Estimates}
In this subsection,
we will collect two key estimates for the proof of our main results.
Let $(M,g(\tau))_{\tau \in [0,\infty)}$ stand for an $n$-dimensional, admissible, complete time-dependent Riemannian manifold.
Continuing from the above subsection,
we also assume that
the reduced distance is smooth at $(\overline{x},\overline{\tau})\in M\times (0,\infty)$.
We first note the following:
\begin{lem}\label{lem:reduced heat estimate2}
Let $K\geq 0$.
We assume
\begin{equation*}\label{eq:assume reduced heat estimate2}
\mathcal{D}(V) \geq -2K\left(H + \Vert V \Vert^2\right),\quad H\geq 0
\end{equation*}
for all vector fields $V$.
Then at $(\overline{x},\overline{\tau})$ we have
\begin{equation*}
(\Delta+\partial_\tau)\overline{L} \leq 2n+2K \overline{L}.
\end{equation*}
\end{lem}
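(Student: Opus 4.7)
The plan is to start from Lemma~\ref{lem:reduced heat estimate}, which already gives
\[
(\Delta+\partial_\tau)\overline{L} \leq 2n-\frac{2}{\sqrt{\overline{\tau}}}\mathcal{K}_{\mathcal{D}},
\]
and reduce the task to showing the single inequality
\[
-\frac{2}{\sqrt{\overline{\tau}}}\mathcal{K}_{\mathcal{D}} \leq 2K\overline{L}.
\]
So the only real content is a lower bound on $\mathcal{K}_{\mathcal{D}}$ coming from the assumed pinch $\mathcal{D}(V)\geq -2K(H+\|V\|^2)$.

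Next I would plug the assumption into the definition of $\mathcal{K}_{\mathcal{D}}$, taking $V=\overline{X}$ along the unique minimal $\mathcal{L}$-geodesic $\gamma:[0,\overline{\tau}]\to M$ from $(x_0,0)$ to $(\overline{x},\overline{\tau})$, to obtain
\[
\mathcal{K}_{\mathcal{D}} = \int_0^{\overline{\tau}}\tau^{3/2}\,\mathcal{D}(\overline{X})\,d\tau \;\geq\; -2K\int_0^{\overline{\tau}}\tau^{3/2}\bigl(H+\|\overline{X}\|^2\bigr)\,d\tau.
\]
Here I would use the hypothesis $H\geq 0$ to conclude that the integrand $\sqrt{\tau}(H+\|\overline{X}\|^2)$ is non-negative, so that pulling out the factor $\tau\leq \overline{\tau}$ yields
\[
\int_0^{\overline{\tau}}\tau^{3/2}\bigl(H+\|\overline{X}\|^2\bigr)\,d\tau \;\leq\; \overline{\tau}\int_0^{\overline{\tau}}\sqrt{\tau}\bigl(H+\|\overline{X}\|^2\bigr)\,d\tau \;=\; \overline{\tau}\,\mathcal{L}(\gamma).
\]
Since $\gamma$ is minimal, $\mathcal{L}(\gamma)=L(\overline{x},\overline{\tau})=2\sqrt{\overline{\tau}}\,\ell(\overline{x},\overline{\tau})$, giving the bound $\mathcal{K}_{\mathcal{D}}\geq -4K\overline{\tau}^{3/2}\ell$.

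Finally I would substitute back to get
\[
-\frac{2}{\sqrt{\overline{\tau}}}\mathcal{K}_{\mathcal{D}} \;\leq\; \frac{8K\overline{\tau}^{3/2}\ell}{\sqrt{\overline{\tau}}} \;=\; 8K\overline{\tau}\,\ell \;=\; 2K\overline{L},
\]
using $\overline{L}=4\overline{\tau}\ell$ by definition, and combine with Lemma~\ref{lem:reduced heat estimate} to conclude. There is no genuine obstacle here; the only things to be careful about are the sign-tracking (so that the $-2/\sqrt{\overline{\tau}}$ factor flips the inequality the right way) and the essential use of $H\geq 0$ to justify replacing $\tau^{3/2}$ by $\overline{\tau}\cdot\sqrt{\tau}$ inside the integral. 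If one wanted to be thorough one could also note, via Remark~\ref{rem:barrier}, that the same bound holds in the barrier sense at points where $\ell$ fails to be smooth, so the estimate is globally useful even though the derivation is pointwise at a smooth point.
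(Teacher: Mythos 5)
Your argument is correct and is essentially the paper's own proof: both invoke Lemma~\ref{lem:reduced heat estimate}, apply the assumed lower bound $\mathcal{D}(\overline{X})\geq -2K(H+\|\overline{X}\|^2)$ inside $\mathcal{K}_{\mathcal{D}}$, use $H\geq 0$ to justify replacing $\tau^{3/2}$ by $\overline{\tau}\sqrt{\tau}$, and recognize the remaining integral as $\mathcal{L}(\gamma)=L$ with $\overline{L}=2\sqrt{\overline{\tau}}\,L$. The closing remark about the barrier-sense extension is a nice observation but not needed for this pointwise statement.
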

\begin{proof}
Lemma \ref{lem:reduced heat estimate} together with the assumption and $\overline{L}=2\sqrt{\overline{\tau}} L$ tells us that
\begin{align*}
(\Delta+\partial_\tau)\overline{L} &\leq 2n-\frac{2}{\sqrt{\overline{\tau}}}\mathcal{K}_{\mathcal{D}}\leq 2n+\frac{4K}{\sqrt{\overline{\tau}}} \int^{\overline{\tau}}_{0}\,\tau^{3/2}\,\left( H+\Vert \overline{X}\Vert^2  \right)\,d\tau\\
                                                    &\leq 2n+4K \sqrt{\overline{\tau}} \int^{\overline{\tau}}_{0}\,\tau^{1/2}\,\left( H+\Vert \overline{X}\Vert^2  \right)\,d\tau=2n+4K \sqrt{\overline{\tau}}L=2n+2K \overline{L}.
\end{align*}
This proves the lemma.
\end{proof}

We also prove that
under the trace Harnack inequality and the non-negativity of $H$,
the norm of the gradient of $\mathfrak{d}$ is bounded by a universal constant (cf. (2.54) in \cite{Ye}).
\begin{lem}\label{lem:reduced gradient estimate}
We assume
\begin{equation}\label{eq:assumption rge}
\mathcal{H}(V) \geq -\frac{H}{\tau},\quad H\geq 0
\end{equation}
for all vector fields $V$.
Then at $(\overline{x},\overline{\tau})$ we have
\begin{equation*}
\Vert \nabla \mathfrak{d} \Vert^2\leq 3.
\end{equation*}
\end{lem}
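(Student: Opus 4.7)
The plan is to reduce the bound on $\|\nabla\mathfrak{d}\|^2$ to the gradient formula (\ref{eq:grad ell}) for $\ell$ from Proposition \ref{prop:derivative formula}, and then control the resulting quantities using the hypotheses (\ref{eq:assumption rge}). Since $\mathfrak{d}=\sqrt{4\overline{\tau}\,\ell}$ as a function of the space variable at fixed time $\overline{\tau}$, the chain rule gives
\begin{equation*}
\nabla \mathfrak{d} = \frac{2\overline{\tau}}{\mathfrak{d}}\,\nabla \ell,
\qquad
\|\nabla \mathfrak{d}\|^{2} = \frac{\overline{\tau}}{\ell}\,\|\nabla \ell\|^{2}.
\end{equation*}
This identity is smoothly valid wherever $\ell>0$; on an open set where the conclusion is already clear by continuity, one only needs to work at points with $\ell>0$.

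Next I would substitute the gradient formula (\ref{eq:grad ell}) to obtain
\begin{equation*}
\|\nabla \mathfrak{d}\|^{2} = -\frac{\overline{\tau}\,H}{\ell} + 1 - \frac{1}{\overline{\tau}^{1/2}\,\ell}\,\mathcal{K}_{\mathcal{H}}.
\end{equation*}
Under the hypothesis $H\geq 0$, the $\mathcal{L}$-length of every curve is non-negative, so $L\geq 0$ and hence $\ell\geq 0$; this simultaneously confirms that $\mathfrak{d}$ is well defined and that the first term $-\overline{\tau}H/\ell$ is non-positive.

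The main step, and the only mildly nontrivial point, is to bound the last term. Using the trace Harnack hypothesis $\mathcal{H}(V)\geq -H/\tau$ along the minimal $\mathcal{L}$-geodesic with tangent $\overline{X}$,
\begin{equation*}
-\mathcal{K}_{\mathcal{H}} = -\int_{0}^{\overline{\tau}}\tau^{3/2}\,\mathcal{H}(\overline{X})\,d\tau \;\leq\; \int_{0}^{\overline{\tau}} \tau^{1/2} H\,d\tau \;\leq\; \int_{0}^{\overline{\tau}} \tau^{1/2}\bigl(H+\|\overline{X}\|^{2}\bigr)\,d\tau = L(\overline{x},\overline{\tau}) = 2\sqrt{\overline{\tau}}\,\ell.
\end{equation*}
Dividing by $\overline{\tau}^{1/2}\ell$ then shows $-\mathcal{K}_{\mathcal{H}}/(\overline{\tau}^{1/2}\ell)\leq 2$. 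Combining the three contributions yields
\begin{equation*}
\|\nabla \mathfrak{d}\|^{2} \leq 0 + 1 + 2 = 3,
\end{equation*}
which finishes the proof. The only conceptual care needed is the $H\geq 0$ ingredient, which is used twice: once to ensure $\ell\geq 0$ so that $\mathfrak{d}$ and the displayed quotient make sense, and once to drop the non-positive term $-\overline{\tau}H/\ell$.
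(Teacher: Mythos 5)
Your proof is correct and follows essentially the same route as the paper: you invoke (\ref{eq:grad ell}), bound $-\mathcal{K}_{\mathcal{H}}$ by $L=2\sqrt{\overline{\tau}}\,\ell$ using the trace Harnack hypothesis together with $H\geq 0$, and drop the non-positive $H$-term. The only cosmetic difference is that you divide by $\ell$ up front and estimate $\|\nabla\mathfrak{d}\|^2$ directly, whereas the paper first establishes $\|\nabla\ell\|^2\leq 3\ell/\overline{\tau}$ and then observes the equivalence.
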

\begin{proof}
From the trace Harnack inequality in (\ref{eq:assumption rge}) it follows that
\begin{equation}\label{eq:applicationHH}
\mathcal{K}_{\mathcal{H}} \geq -\int^{\overline{\tau}}_{0}\,\sqrt{\tau}\,H\,d\tau\geq -L= -2\sqrt{\overline{\tau}}\,\ell.
\end{equation}
By the non-negativity of $H$,
and by applying (\ref{eq:applicationHH}) to (\ref{eq:grad ell}),
we obtain
\begin{equation*}
\Vert \nabla \ell \Vert^2\leq \Vert \nabla \ell \Vert^2+H=\frac{\ell}{\overline{\tau}}-\frac{1}{\overline{\tau}^{3/2}}\mathcal{K}_{\mathcal{H}}\leq \frac{3 \ell}{\overline{\tau}}.
\end{equation*}
This is equivalent to the desired one.
\end{proof}

\begin{rem}\label{rem:static universal}
This estimate seems to be natural since in the static case of $h=0$, the function $\mathfrak{d}$ coincides with the Riemannian distance function from the fixed point,
whose norm of the gradient is equal to a universal constant $1$ (see Remark \ref{rem:static Liouville}).
\end{rem}

%%%%%%%%%%%%%%%%%%%%%%%%%%%%
%%%%%%%%%%%%%%%%%%%%%%%%%%%%
%%%%%%%%%%%%%%%%%%%%%%%%%%%%
\section{Proof of main results}\label{sec:Proof of main results}
We now prove Theorem \ref{thm:gradient estimate},
and conclude Theorem \ref{thm:main result}.
We give a proof of Theorem \ref{thm:gradient estimate} by using the key estimates obtained in Subsection \ref{sec:Estimates}
along the line of the proof of Souplet-Zhang gradient estimate in \cite{SZ} (for comparison between our method of the proof and that in \cite{SZ}, see Subsection \ref{sec:comp SZ} below).
Throughout this section,
let $(M,g(\tau))_{\tau \in [0,\infty)}$ stand for an $n$-dimensional, admissible, complete time-dependent Riemannian manifold.

%%%%%%%%%%%%%%%%%%%%%%%%%%%%
\subsection{Backward heat equations}\label{sec:Formulas for backward heat equations}
In the next two subsections,
we will make preparations towards the proof of Theorem \ref{thm:gradient estimate}.
Here,
we examine properties of a positive solution $u:M\times [0,\infty)\to (0,\infty)$ to backward heat equation.
We begin with the following:
\begin{lem}\label{lem:easy}
We set
\begin{equation}\label{eq:log heat}
f:=\log u.
\end{equation}
Then we have
\begin{align}\label{eq:easy1}
(\Delta +\partial_{\tau}) f&=-\Vert \nabla f \Vert^2,\\ \label{eq:easy2}
(\Delta+\partial_{\tau}) \Vert \nabla f \Vert^2& = 2\Vert \nabla^2 f \Vert^2-2g(\nabla \Vert \nabla f \Vert^2,\nabla f)+2\mathcal{R}(\nabla f),
\end{align}
where $\mathcal{R}(\nabla f)$ is defined as $(\ref{eq:Muller sRF part})$.
\end{lem}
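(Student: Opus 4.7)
The first identity (\ref{eq:easy1}) follows from a direct substitution. Writing $u = e^f$, one has $\nabla u = u\,\nabla f$, and by the product rule $\Delta u = u\,\Delta f + u\,\|\nabla f\|^2$; similarly $\partial_\tau u = u\,\partial_\tau f$. Plugging these into the backward heat equation $(\Delta + \partial_\tau)u = 0$ and dividing by the positive quantity $u$ yields $\Delta f + \|\nabla f\|^2 + \partial_\tau f = 0$, which rearranges to (\ref{eq:easy1}).

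For the second identity (\ref{eq:easy2}), the plan is to compute $\Delta\|\nabla f\|^2$ and $\partial_\tau \|\nabla f\|^2$ separately and combine them, the key point being the extra term produced by the $\tau$-dependence of the metric. On the fixed-time Riemannian manifold $(M, g(\tau))$, the standard Bochner formula gives
\[
\Delta \|\nabla f\|^2 = 2\|\nabla^2 f\|^2 + 2 g(\nabla \Delta f, \nabla f) + 2\ric(\nabla f, \nabla f).
\]
For the time derivative, I would use the coordinate expression $\|\nabla f\|^2 = g^{ij}\partial_i f\,\partial_j f$ together with $\partial_\tau g^{ij} = -2 h^{ij}$, obtained by differentiating $g^{ij}g_{jk} = \delta^i_k$ and invoking $h = \tfrac{1}{2}\partial_\tau g$. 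This yields
\[
\partial_\tau \|\nabla f\|^2 = -2 h(\nabla f, \nabla f) + 2 g(\nabla f, \nabla \partial_\tau f).
\]

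To finish, I substitute $\Delta f = -\partial_\tau f - \|\nabla f\|^2$ from (\ref{eq:easy1}) into the Bochner formula, so that $\nabla \Delta f = -\nabla \partial_\tau f - \nabla \|\nabla f\|^2$. Adding the two displays above, the $g(\nabla f, \nabla\partial_\tau f)$ contributions cancel, and the surviving curvature terms combine as $\ric(\nabla f, \nabla f) - h(\nabla f, \nabla f) = \mathcal{R}(\nabla f)$ by the definition (\ref{eq:Muller sRF part}). I do not anticipate any genuine obstacle here; the only point requiring care is tracking the metric-derivative contribution $-2 h(\nabla f, \nabla f)$, which is absent in the static case and precisely supplies the shift from $\ric$ to $\mathcal{R}$ on the right-hand side of (\ref{eq:easy2}).
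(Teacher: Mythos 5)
Your proposal is correct and follows essentially the same route as the paper: for (\ref{eq:easy1}) a direct substitution using $\nabla u = u\nabla f$, $\Delta u = u(\Delta f + \|\nabla f\|^2)$; for (\ref{eq:easy2}) the Bochner formula combined with the variation formula $\partial_\tau \|\nabla f\|^2 = -(\partial_\tau g)(\nabla f,\nabla f) + 2g(\nabla\partial_\tau f,\nabla f)$ and the substitution $\nabla\Delta f = -\nabla\partial_\tau f - \nabla\|\nabla f\|^2$ from (\ref{eq:easy1}). The only cosmetic difference is that you compute $\Delta\|\nabla f\|^2$ and $\partial_\tau\|\nabla f\|^2$ separately and then add, while the paper starts from the time-derivative line and feeds in Bochner at the end; the ingredients and cancellations are identical.
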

\begin{proof}
From direct calculations and backward heat equation (\ref{eq:bHE}),
we deduce
\begin{equation*}
\Delta f=-\frac{\Vert \nabla u \Vert^2}{u^2}+\frac{\Delta u}{u}=-\frac{\Vert \nabla u \Vert^2}{u^2}-\frac{\partial_{\tau}u }{u}=-\Vert \nabla f \Vert^2-\partial_{\tau}f,
\end{equation*}
and hence (\ref{eq:easy1}).
The second one (\ref{eq:easy2}) follows from
\begin{align*}
\partial_{\tau} \Vert \nabla f \Vert^2&=-(\partial_{\tau}g)(\nabla f,\nabla f)+2g(\nabla \partial_{\tau} f,\nabla f)\\
                                                &=-2\ric(\nabla f,\nabla f)-2g(\nabla \Delta f,\nabla f)-2g(\nabla \Vert \nabla f \Vert^2,\nabla f)+2\mathcal{R}(\nabla f)\\
                                                &=-\Delta \Vert \nabla f \Vert^2+2\Vert \nabla^2 f \Vert^2-2g(\nabla \Vert \nabla f \Vert^2,\nabla f)+2\mathcal{R}(\nabla f).
\end{align*}
Here we used a fundamental formula for time-dependent metrics (see e.g., Lemma 4.5 in \cite{AH}),
the equation (\ref{eq:easy1}),
and Bochner formula.
This completes the proof.
\end{proof}

We next show the following:
\begin{lem}\label{lem:simple}
Let $f$ be defined as $(\ref{eq:log heat})$.
We assume $\sup f<1$,
and set
\begin{equation}\label{eq:gradient log heat}
w:=\frac{\Vert \nabla f \Vert^2}{(1-f)^2}.
\end{equation}
Then we have
\begin{equation*}
\left(\Delta+\partial_{\tau}\right)w-\frac{2f g(\nabla w,\nabla f)}{1-f}\geq 2(1-f)w^2+\frac{2\mathcal{R}(\nabla f)}{(1-f)^2}.
\end{equation*}
\end{lem}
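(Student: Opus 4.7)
The plan is to compute $(\Delta + \partial_\tau) w$ directly from Lemma \ref{lem:easy} and close the inequality via a completing-the-square trick in the Bochner spirit.

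Set $F := 1-f$, which is positive by the hypothesis $\sup f < 1$. I first record the auxiliary gradient identity
\begin{equation*}
\nabla \Vert \nabla f \Vert^2 = F^2 \, \nabla w - 2 F w \, \nabla f,
\end{equation*}
obtained by differentiating $w = \Vert \nabla f \Vert^2 F^{-2}$ and using $\nabla F = -\nabla f$. Next, expanding $(\Delta + \partial_\tau)(F^{-2} \Vert \nabla f \Vert^2)$ by the product rule, substituting (\ref{eq:easy1}) and (\ref{eq:easy2}), and applying the elementary computation $(\Delta + \partial_\tau) F^{-2} = 6 F^{-4} \Vert \nabla f \Vert^2 - 2 F^{-3} \Vert \nabla f \Vert^2$ (again via (\ref{eq:easy1})), the auxiliary identity lets me eliminate $\nabla \Vert \nabla f\Vert^2$ in favor of $\nabla w$. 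Careful collection of terms should produce
\begin{equation*}
(\Delta + \partial_\tau) w = \frac{2 \Vert \nabla^2 f \Vert^2}{(1-f)^2} + \frac{2 \mathcal{R}(\nabla f)}{(1-f)^2} + \frac{2(1+f)}{1-f}\, g(\nabla w, \nabla f) - 2 f w^2.
\end{equation*}
Splitting $\frac{2(1+f)}{1-f} = \frac{2f}{1-f} + \frac{2}{1-f}$ isolates precisely the drift term $2f g(\nabla w, \nabla f)/(1-f)$ that the lemma moves to the left-hand side.

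It then suffices to verify
\begin{equation*}
\frac{2 \Vert \nabla^2 f \Vert^2}{(1-f)^2} + \frac{2 g(\nabla w, \nabla f)}{1-f} \geq 2 w^2,
\end{equation*}
since combining this with the residual $-2f w^2$ produces $2(1-f)w^2$, matching the lemma's right-hand side (the $\mathcal{R}(\nabla f)$ terms cancel identically). The key observation is the Bochner-type square
\begin{equation*}
0 \leq \left\Vert \nabla^2 f + \frac{\nabla f \otimes \nabla f}{1-f}\right\Vert^2 = \Vert \nabla^2 f \Vert^2 + \frac{2 \Hess(f)(\nabla f, \nabla f)}{1-f} + \frac{\Vert \nabla f \Vert^4}{(1-f)^2},
\end{equation*}
which, after division by $(1-f)^2$ and after rewriting $2\Hess(f)(\nabla f, \nabla f) = g(\nabla \Vert \nabla f\Vert^2, \nabla f)$ and applying the auxiliary identity of the second paragraph, is precisely the inequality above.

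The main obstacle is purely algebraic bookkeeping: keeping signs and coefficients of $w^2$, $g(\nabla w, \nabla f)$, and $\Vert \nabla^2 f\Vert^2$ straight through the expansion of $(\Delta + \partial_\tau) w$. The only conceptual step is anticipating that the Hessian contribution must be completed into the square $\Vert \nabla^2 f + (1-f)^{-1}\nabla f \otimes \nabla f\Vert^2$; this mirrors the static Souplet-Zhang argument, with $\mathcal{R}(\nabla f)$ taking the role of the Ricci curvature term in their computation.
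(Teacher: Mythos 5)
Your proposal is correct and follows essentially the same route as the paper's proof: compute $(\Delta+\partial_\tau)w$ via Lemma \ref{lem:easy}, isolate the drift term $2fg(\nabla w,\nabla f)/(1-f)$, and reduce the claim to the nonnegativity of the residual $\mathcal{F}=\frac{\Vert\nabla f\Vert^4}{(1-f)^4}+\frac{\Vert\nabla^2 f\Vert^2}{(1-f)^2}+\frac{g(\nabla\Vert\nabla f\Vert^2,\nabla f)}{(1-f)^3}$. The one genuine (and pleasant) difference is the closing step: the paper bounds $\mathcal{F}\geq 0$ through a chain of AM--GM, the Kato inequality, and Cauchy--Schwarz, whereas you observe that $\mathcal{F}$ is identically the perfect square $\frac{1}{(1-f)^2}\left\Vert\nabla^2 f+\frac{\nabla f\otimes\nabla f}{1-f}\right\Vert^2$, which is cleaner and pins down the exact deficit in the lemma's inequality; both arguments are equally elementary and give the same conclusion.
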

\begin{proof}
By straightforward computations,
\begin{equation*}
\nabla w=\frac{\nabla \Vert \nabla f\Vert^2}{(1-f)^2}+\frac{2\Vert \nabla f\Vert^2\nabla f}{(1-f)^3},
\end{equation*}
and hence
\begin{equation}\label{eq:simple1}
g(\nabla w,\nabla f)=\frac{g(\nabla \Vert \nabla f\Vert^2,\nabla f)}{(1-f)^2}+\frac{2\Vert \nabla f\Vert^4}{(1-f)^3}=\frac{g(\nabla \Vert \nabla f\Vert^2,\nabla f)}{(1-f)^2}+2(1-f)w^2.
\end{equation}
Furthermore,
(\ref{eq:simple1}) yields
\begin{align}\label{eq:simple2}
\Delta w&=\frac{\Delta \Vert \nabla f \Vert^2}{(1-f)^2}+\frac{4g(\nabla \Vert \nabla f \Vert^2,\nabla f)}{(1-f)^3}+\frac{6 \Vert \nabla f \Vert^4}{(1-f)^4}+\frac{2\Vert \nabla f\Vert^2 \Delta f}{(1-f)^3}\\ \notag
                     &= \frac{2g(\nabla w,\nabla f)}{1-f}+\frac{2\Vert \nabla f \Vert^4}{(1-f)^4}+\frac{2g(\nabla \Vert \nabla f \Vert^2,\nabla f)}{(1-f)^3}+\frac{\Delta \Vert \nabla f \Vert^2}{(1-f)^2}+\frac{2\Vert \nabla f\Vert^2 \Delta f}{(1-f)^3}.
\end{align}
Likewise,
Lemma \ref{lem:easy} and (\ref{eq:simple1}) imply
\begin{align}\label{eq:simple3}
\partial_{\tau} w&=\frac{\partial_{\tau} \Vert \nabla f \Vert^2}{(1-f)^2}+\frac{2\Vert \nabla f \Vert^2 \partial_{\tau} f}{(1-f)^3}\\ \notag
                         &=-\frac{2\Vert \nabla f \Vert^4}{(1-f)^3}-\frac{2g(\nabla \Vert \nabla f\Vert^2,\nabla f)}{(1-f)^2}
                         +\frac{2\Vert \nabla^2 f \Vert^2}{(1-f)^2}+\frac{2\mathcal{R}(\nabla f)}{(1-f)^2}-\frac{\Delta \Vert \nabla f\Vert^2}{(1-f)^2}-\frac{2\Vert \nabla f \Vert^2 \Delta f}{(1-f)^3}\\ \notag
                         &=-2g(\nabla w,\nabla f)+2(1-f)w^2+\frac{2\Vert \nabla^2 f \Vert^2}{(1-f)^2}+\frac{2\mathcal{R}(\nabla f)}{(1-f)^2}-\frac{\Delta \Vert \nabla f\Vert^2}{(1-f)^2}-\frac{2\Vert \nabla f \Vert^2 \Delta f}{(1-f)^3}.
\end{align}
Combining (\ref{eq:simple2}) and (\ref{eq:simple3}),
we obtain
\begin{equation*}
\left(\Delta+\partial_{\tau}\right)w-\frac{2f \,g(\nabla w,\nabla f)}{1-f} =2(1-f)w^2+\frac{2\mathcal{R}(\nabla f)}{(1-f)^2}+2\mathcal{F},
\end{equation*}
where
\begin{equation}\label{eq:simple4}
\mathcal{F}:=\frac{\Vert \nabla f \Vert^4}{(1-f)^4}  +\frac{\Vert \nabla^2 f \Vert^2}{(1-f)^2}+\frac{g(\nabla \Vert \nabla f \Vert^2,\nabla f)}{(1-f)^3}.
\end{equation}

Now,
it is enough to verify that
$\mathcal{F}$ is non-negative.
For the first two terms of (\ref{eq:simple4}),
\begin{equation}\label{eq:simple5}
\frac{\Vert \nabla f \Vert^4}{(1-f)^4}  +\frac{\Vert \nabla^2 f \Vert^2}{(1-f)^2}\geq \frac{2 \Vert \nabla^2 f\Vert \Vert \nabla f \Vert^2}{(1-f)^3}\geq \frac{\Vert \nabla \Vert \nabla f \Vert^2 \Vert \, \Vert \nabla f\Vert}{(1-f)^3}
\end{equation}
by the inequality of arithmetic-geometric means,
and the Kato inequality.
For the third term,
the Cauchy-Schwarz inequality yields
\begin{equation}\label{eq:simple6}
\frac{g(\nabla \Vert \nabla f \Vert^2,\nabla f)}{(1-f)^3}\geq -\frac{\Vert \nabla \Vert \nabla f \Vert^2 \Vert \, \Vert \nabla f\Vert}{(1-f)^3}.
\end{equation}
From (\ref{eq:simple5}) and (\ref{eq:simple6}),
it follows that
$\mathcal{F}$ is non-negative,
and we complete the proof.
\end{proof}

%%%%%%%%%%%%%%%%%%%%%%%%%%%%
\subsection{Cut-off arguments}\label{sec:Cut-off arguments}
This subsection is devoted to the cut-off argument.
We recall the following elementary fact:
\begin{lem}\label{lem:cutoff}
Let $R,T>0,\,\alpha \in (0,1)$.
Then there is a smooth function $\psi:[0,\infty)\times [0,\infty)\to [0,1]$,
and a constant $C_\alpha>0$ depending only on $\alpha$ such that the following hold:
\begin{enumerate}\setlength{\itemsep}{3pt}
\item $\psi\equiv 1$ on $[0,R/2]\times [0,T/4]$, and $\psi\equiv 0$ on $[R,\infty)\times [T/2,\infty)$;
\item $\partial_r \psi \leq 0$ on $[0,\infty)\times [0,\infty)$, and $\partial_r \psi \equiv 0$ on $[0,R/2]\times [0,\infty)$;
\item we have
\begin{equation*}
         \frac{\vert \partial_r \psi \vert}{\psi^\alpha}\leq \frac{C_{\alpha}}{R},\quad \frac{\vert \partial^2_{r}\psi\vert}{\psi^\alpha}\leq \frac{C_{\alpha}}{R^2},\quad \frac{\vert \partial_{\tau} \psi\vert}{\psi^{1/2}}\leq \frac{C}{T},
\end{equation*}
where $C>0$ is a universal constant.
\end{enumerate}
\end{lem}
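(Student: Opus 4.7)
The plan is to construct $\psi$ as a tensor product $\psi(r,\tau)=\varphi(r)\chi(\tau)$, where $\varphi$ is a spatial cutoff and $\chi$ is a temporal cutoff, each built from a single fixed smooth bump by rescaling and, in the case of $\varphi$, raising to a suitable power.

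First I would fix once and for all a smooth non-increasing function $\eta:[0,\infty)\to[0,1]$ with $\eta\equiv 1$ on $[0,1/2]$, $\eta\equiv 0$ on $[1,\infty)$, and $\vert\eta'\vert,\vert\eta''\vert$ bounded by a universal constant $C_0$; such $\eta$ exists by a standard mollification. Next I would set $\chi(\tau):=\eta(2\tau/T)^{2}$, which immediately gives $\chi\equiv 1$ on $[0,T/4]$, $\chi\equiv 0$ on $[T/2,\infty)$, and $\vert\chi'(\tau)\vert/\chi(\tau)^{1/2}=(4/T)\vert\eta'(2\tau/T)\vert\leq 4C_0/T$, as required (with the convention that the quotient is read as $0$ wherever both sides vanish).

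Second, I would define the spatial cutoff by $\varphi(r):=\eta(r/R)^{k}$, where $k$ is the smallest integer satisfying $k(1-\alpha)\geq 2$; such a $k$ depends only on $\alpha$. The properties $\varphi\equiv 1$ on $[0,R/2]$, $\varphi\equiv 0$ on $[R,\infty)$, $\partial_r\varphi\leq 0$, and $\partial_r\varphi\equiv 0$ on $[0,R/2]$ are immediate from the corresponding properties of $\eta$. The purpose of raising $\eta$ to the power $k$ is to absorb the $\varphi^{\alpha}$ denominator in the derivative estimates: computing
\[
\partial_r\varphi=\frac{k}{R}\,\eta^{k-1}\eta',\qquad \partial_r^{2}\varphi=\frac{k(k-1)}{R^{2}}\,\eta^{k-2}(\eta')^{2}+\frac{k}{R^{2}}\,\eta^{k-1}\eta''
\]
(with $\eta$ and its derivatives evaluated at $r/R$), and using $k(1-\alpha)-1\geq 1$ and $k(1-\alpha)-2\geq 0$, the exponents of $\eta\in[0,1]$ remaining after dividing by $\varphi^{\alpha}=\eta^{\alpha k}$ are non-negative, so $\vert\partial_r\varphi\vert/\varphi^{\alpha}\leq C_\alpha/R$ and $\vert\partial_r^{2}\varphi\vert/\varphi^{\alpha}\leq C_\alpha/R^{2}$ with $C_\alpha$ depending only on $\alpha$ (through $k$ and $C_0$).

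Finally, setting $\psi:=\varphi\chi$, all three listed conditions pass through the product: the region equalities follow from those of $\varphi$ and $\chi$, the sign and vanishing of $\partial_r\psi=\varphi'\chi$ come from $\varphi'$ alone (since $\chi\geq 0$), and the derivative bounds follow by multiplying the single-variable estimates by the other factor and observing $\chi^{1-\alpha}\leq 1$ in the two spatial estimates and $\varphi^{1/2}\leq 1$ in the temporal one. The only step requiring any real thought is the choice of the exponent $k$ controlling the boundary decay of $\varphi$; everything else is a mechanical verification, so I do not anticipate a substantive obstacle.
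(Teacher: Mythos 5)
Your proof is correct. The paper gives no proof of this lemma (stating it only as a recalled elementary fact), and your tensor--product construction $\psi=\varphi\chi$ with $\varphi=\eta(r/R)^{k}$, $\chi=\eta(2\tau/T)^{2}$ for a fixed bump $\eta$ and with $k=k(\alpha)$ chosen so that $k(1-\alpha)\geq 2$ is exactly the standard construction in the cut-off literature (e.g.\ Souplet--Zhang); all the required properties, including the vanishing-quotient convention where $\psi=0$, are verified soundly.
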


Having Lemma \ref{lem:cutoff} at hand,
we obtain the following assertion:
\begin{prop}\label{prop:difficult}
Let $K\geq 0$.
We assume
\begin{equation}\label{eq:cutoff assumption}
\mathcal{R}(V)\geq -K\Vert V \Vert^2,\quad \mathcal{D}(V)\geq -2K\left(H+\Vert V \Vert^2 \right),\quad \mathcal{H}(V) \geq -\frac{H}{\tau},\quad H\geq 0
\end{equation}
for all vector fields $V$.
Let $u:M \times [0,\infty) \to (0,\infty)$ denote a positive solution to backward heat equation.
For $R,T>0$,
we assume $u\leq 1$ on $Q_{R,T}$.
We define $f$ and $w$ as $(\ref{eq:log heat})$ and $(\ref{eq:gradient log heat})$ on $Q_{R,T}$,
respectively.
We also take a function $\psi:[0,\infty)\times [0,\infty)\to [0,1]$ in Lemma \ref{lem:cutoff} with $\alpha=3/4$,
and define
\begin{equation}\label{eq:cutoff function}
\psi(x,\tau):=\psi(\mathfrak{d}(x,\tau),\tau).
\end{equation}
Then we have
\begin{equation}\label{eq:difficult}
(\psi w)^2\leq \frac{\overline{C}_n}{R^4}+\frac{\widetilde{C}_1}{T^2}+\widetilde{C}_2 K^2+2\Phi
\end{equation}
at every point in $Q_{R,T}$ such that the reduced distance is smooth,
where for the universal constants $C_{3/4},C>0$ given in Lemma \ref{lem:cutoff},
we put
\begin{align}\label{eq:dimension epsilon}
\overline{C}_{n}&:=24C^2_{3/4}\left(n^2+\frac{9}{4}+\frac{657}{64}C^2_{3/4}\right),\quad \widetilde{C}_1:=6C^2,\quad \widetilde{C}_2:=24\left(1+\frac{C^2_{3/4}}{4}\right), \\  \label{eq:maximal constant}
\Phi&:=(\Delta+\partial_{\tau})(\psi w)-\frac{2g\left( \nabla\psi, \nabla(\psi w) \right)}{\psi}-\frac{2f g(\nabla(\psi w),\nabla f)}{1-f}.
\end{align}
\end{prop}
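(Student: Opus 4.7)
The plan is to adapt the Souplet--Zhang cut-off argument by using $\mathfrak{d}$ in place of the ambient Riemannian distance. Lemmas~\ref{lem:reduced gradient estimate} ($\|\nabla\mathfrak{d}\|^2\le 3$) and \ref{lem:reduced heat estimate2} (applied to $\mathfrak{d}^2$ and yielding $(\Delta+\partial_\tau)\mathfrak{d}\le(n-\|\nabla\mathfrak{d}\|^2)/\mathfrak{d}+K\mathfrak{d}$, with $1/\mathfrak{d}\le 2/R$ available on $\supp\partial_r\psi$) play the roles classically held by $\|\nabla d\|=1$ and the Laplacian comparison. Applying Lemma~\ref{lem:simple} pointwise at a point where $\ell$ (hence $\psi$) is smooth, multiplying by $\psi$, and using the product-rule identities
\begin{align*}
(\Delta+\partial_\tau)(\psi w)&=\psi(\Delta+\partial_\tau)w+w(\Delta+\partial_\tau)\psi+2g(\nabla\psi,\nabla w),\\
\tfrac{2g(\nabla\psi,\nabla(\psi w))}{\psi}&=2g(\nabla\psi,\nabla w)+\tfrac{2w\|\nabla\psi\|^2}{\psi},\\
\tfrac{2fg(\nabla(\psi w),\nabla f)}{1-f}&=\tfrac{2f\psi g(\nabla w,\nabla f)}{1-f}+\tfrac{2fwg(\nabla\psi,\nabla f)}{1-f}
\end{align*}
to reassemble the definition of $\Phi$, I obtain the baseline inequality
\[
\Phi\ge 2\psi(1-f)w^2+\tfrac{2\psi\mathcal R(\nabla f)}{(1-f)^2}+w(\Delta+\partial_\tau)\psi-\tfrac{2w\|\nabla\psi\|^2}{\psi}-\tfrac{2fwg(\nabla\psi,\nabla f)}{1-f}.
\]

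Next I would extract $(\psi w)^2$ on the left. The hypothesis $\mathcal R(V)\ge -K\|V\|^2$ together with $\|\nabla f\|^2=(1-f)^2 w$ gives $\tfrac{2\psi\mathcal R(\nabla f)}{(1-f)^2}\ge -2K\psi w$; since $u\le 1$ forces $f\le 0$ and hence $1-f\ge 1$, and $\psi\le 1$, one has $2\psi(1-f)w^2\ge 2(\psi w)^2$. This yields
\[
2(\psi w)^2\le\Phi+2K\psi w-w(\Delta+\partial_\tau)\psi+\tfrac{2w\|\nabla\psi\|^2}{\psi}+\tfrac{2fwg(\nabla\psi,\nabla f)}{1-f}.
\]
I would then expand $\psi=\psi(\mathfrak{d}(x,\tau),\tau)$ via $\nabla\psi=(\partial_r\psi)\nabla\mathfrak{d}$ and $(\Delta+\partial_\tau)\psi=(\partial_r^2\psi)\|\nabla\mathfrak{d}\|^2+(\partial_r\psi)(\Delta+\partial_\tau)\mathfrak{d}+\partial_\tau\psi$, combining with the cut-off bounds of Lemma~\ref{lem:cutoff} at $\alpha=3/4$ and the two reduced-geometric lemmas above. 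The gradient mixed term reduces, via $\|\nabla f\|=(1-f)\sqrt w$ and $|f|/(1-f)\le 1$, to $2\sqrt 3\,|\partial_r\psi|w^{3/2}$, a $w^{3/2}$-quantity.

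Finally I would apply Young's inequality with $a=\psi w$: for the $K$-term and the two linear-in-$w$ cross terms via the exponent-$2$ form $ab\le\delta a^2+b^2/(4\delta)$, and for the $w^{3/2}$ term via the exponents $(4/3,4)$, which produces a factor $|\partial_r\psi|^{4/3}/\psi$ bounded by $(C_{3/4}/R)^{4/3}$ precisely because $\alpha=3/4$ (this is the role of that exponent choice). Choosing the four Young parameters so that their absorbed $(\psi w)^2$ coefficients sum to $3/2$, the inequality $2(\psi w)^2\le\Phi+\ldots$ becomes $\tfrac12(\psi w)^2\le\Phi+\sum B_i$, so multiplying by $2$ gives $(\psi w)^2\le 2\Phi+2\sum B_i$; the residuals $2\sum B_i$ collect into $\overline C_n/R^4+\widetilde C_1/T^2+\widetilde C_2 K^2$, with the dimension $n$ entering only through the $n/\mathfrak{d}\le 2n/R$ contribution in $(\Delta+\partial_\tau)\mathfrak{d}$. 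The main technical obstacle is this simultaneous bookkeeping of four Young parameters to reproduce the explicit constants of \eqref{eq:dimension epsilon}; conceptually the statement is Souplet--Zhang adapted to reduced geometry, with Lemmas~\ref{lem:reduced heat estimate2} and \ref{lem:reduced gradient estimate} providing the missing distance comparisons.
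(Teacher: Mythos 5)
Your overall strategy coincides with the paper's: starting from Lemma~\ref{lem:simple}, multiplying by $\psi$, reassembling $\Phi$ via the product-rule identities, and then using Lemmas~\ref{lem:reduced heat estimate2} and~\ref{lem:reduced gradient estimate} in place of the classical Laplacian comparison and $\|\nabla d\|=1$ is exactly what the paper does. The baseline inequality you write is identical to the paper's (it appears as the inequality~(\ref{eq:main difficult}) with $\Psi_1,\dots,\Psi_4$).

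However, there is a genuine gap in the way you organize the Young absorptions. You pass from $2(1-f)\psi w^2$ to $2(\psi w)^2$ at the outset and then propose to apply Young's inequality with $a=\psi w$, so that all cross terms are absorbed into $(\psi w)^2$. This does not work with the $\alpha=3/4$ cut-off bounds. For example, a quadratic Young absorption of $w\,|\partial_r^2\psi|$ into $\delta(\psi w)^2$ forces the residue $|\partial_r^2\psi|^2/\psi^2$, but $|\partial_r^2\psi|/\psi^{3/4}\le C_{3/4}/R^2$ only controls $|\partial_r^2\psi|^2/\psi^{3/2}$; the extra $\psi^{-1/2}$ is unbounded near $\partial\,\mathrm{supp}\,\psi$. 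Similarly, the $(4/3,4)$--Young absorption of $|\partial_r\psi|\,w^{3/2}$ into $(\psi w)^2$ requires $a=(\psi w)^{3/2}$ and $b\propto|\partial_r\psi|/\psi^{3/2}$, whose fourth power $|\partial_r\psi|^4/\psi^6$ is likewise uncontrolled (the factor $|\partial_r\psi|^{4/3}/\psi$ you quote is not what that Young step produces). The paper avoids this precisely by absorbing every cross term into $\psi w^2$ (equivalently $a=\psi^{1/2}w$ in your notation, and $a=\psi^{3/4}w^{3/2}$ for the $(4/3,4)$ step), for which the residues are $|\partial_r\psi|^2/\psi$, $|\partial_r^2\psi|^2/\psi$ and $\|\nabla\psi\|^4/\psi^3$ and these \emph{are} controlled by the $\alpha=3/4$ cut-off bounds and (\ref{eq:grad est cutoff}); moreover it keeps the factor $(1-f)$ in the term $\Psi_4$ so that the $w^{3/2}$ piece is absorbed into $(1-f)\psi w^2$, then divides by $(1-f)$, and only at the very last line uses $(\psi w)^2\le\psi w^2$. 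Reordering these steps as you propose loses the boundedness of the residues and the argument would not close; to repair it you should carry $\psi w^2$ (and the $(1-f)$ weight for $\Psi_4$) through all the Young steps and replace it by $(\psi w)^2$ only at the end.
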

\begin{proof}
First,
note that
$u\leq 1$ implies $f\leq 0$,
and hence $w$ is well-defined.
By direct computations and Lemma \ref{lem:simple},
\begin{align*}
\Phi&=\psi \left(\Delta+\partial_{\tau} \right)w-\frac{2\psi f g(\nabla w,\nabla f)}{1-f}+w\left(\Delta+\partial_{\tau}\right)\psi-\frac{2w\Vert \nabla \psi \Vert^2}{\psi}-\frac{2w f g(\nabla \psi,\nabla f)}{1-f}\\
                  &\geq 2(1-f)\psi w^2+\frac{2\psi\mathcal{R}(\nabla f)}{(1-f)^2}+w\left(\Delta+\partial_{\tau} \right)\psi-\frac{2w\Vert \nabla \psi \Vert^2}{\psi}-\frac{2wf g(\nabla \psi,\nabla f)}{1-f}.
\end{align*}
Therefore,
\begin{equation}\label{eq:main difficult}
2(1-f)\psi w^2\leq \Psi_1+\Psi_2+\Psi_3+\Psi_4+\Phi
\end{equation}
for
\begin{equation}\label{eq:estimate constant}
\Psi_1:=-\frac{2\psi \mathcal{R}(\nabla f)}{(1-f)^2},\quad \Psi_2:=-w \left(\Delta+\partial_{\tau}\right)\psi,\quad \Psi_3:=\frac{2w\Vert \nabla \psi \Vert^2}{\psi},\quad \Psi_4:=\frac{2w f g(\nabla \psi,\nabla f)}{1-f}.
\end{equation}

We present upper estimates of $\Psi_1, \Psi_2, \Psi_3,\Psi_4$.
The following Young inequality plays an essential role in the estimates:
For all $p,q\in (1,\infty)$ with $p^{-1}+q^{-1}=1$, $a,b\geq 0$, and $\epsilon>0$,
\begin{equation}\label{eq:Young}
ab\leq \frac{\epsilon a^p}{p}+\frac{b^q}{\epsilon^{q/p} q}.
\end{equation} 
The inequality
\begin{equation}\label{eq:grad est cutoff}
\frac{\Vert \nabla \psi \Vert^2}{\psi^{3/2}}\leq \frac{3 C^2_{3/4}}{R^2}
\end{equation}
is also useful,
which follows from Lemmas \ref{lem:reduced gradient estimate} and \ref{lem:cutoff}.
We first derive an upper bound of $\Psi_1$.
By the assumption for $\mathcal{R}(V)$,
the Young inequality (\ref{eq:Young}) with $p,q=2$,
and $\psi\leq 1$,
\begin{equation}\label{eq:estimate of C_1}
\Psi_1=-\frac{2\psi \mathcal{R}(\nabla f)}{(1-f)^2}\leq 2K \psi w\leq \epsilon \psi^2 w^2+\frac{K^2}{\epsilon}\leq \epsilon \psi w^2+\frac{K^2}{\epsilon}.
\end{equation}
We next show an upper bound of $\Psi_2$.
By direct computations,
we have
\begin{align}\label{eq:different point}
\Psi_2&=-w \left(\Delta+\partial_{\tau}\right)\psi=-w  \left(           \partial_r \psi(\Delta+\partial_{\tau})\mathfrak{d}+\partial^2_r \psi \Vert \nabla \mathfrak{d} \Vert^2+\partial_{\tau} \psi \right)\\ \notag
                  &=-w  \left[           \partial_r \psi   \left(  \frac{1}{2\mathfrak{d}}(\Delta+\partial_{\tau})\overline{L}-\frac{\Vert \nabla \overline{L} \Vert^2}{4\mathfrak{d}^3}  \right)+\partial^2_r \psi \Vert \nabla \mathfrak{d} \Vert^2+\partial_{\tau} \psi \right]\\ \notag
                  &=\frac{w \vert \partial_r \psi \vert}{2\mathfrak{d}}(\Delta+\partial_{\tau})\overline{L}-w \vert \partial_r \psi \vert \frac{\Vert \nabla \overline{L} \Vert^2}{4\mathfrak{d}^3}-w \,\partial^2_r \psi \Vert \nabla \mathfrak{d} \Vert^2-w\,\partial_{\tau} \psi\\ \label{eq:new different point}
                  & \leq \frac{w \vert \partial_r \psi \vert}{2\mathfrak{d}}(\Delta+\partial_{\tau})\overline{L}+w \vert \partial^2_r \psi\vert \Vert \nabla \mathfrak{d} \Vert^2+w \,\vert \partial_{\tau} \psi \vert.
\end{align}
Using Lemmas \ref{lem:reduced heat estimate2}, \ref{lem:reduced gradient estimate} and $\overline{L}=\mathfrak{d}^2$,
we obtain
\begin{align}\label{eq:different point2}
\Psi_2&\leq n \frac{w \vert \partial_r \psi \vert}{\mathfrak{d}}+K w \vert \partial_r \psi \vert \mathfrak{d}+3w \vert \partial^2_r \psi \vert+w \, \vert \partial_{\tau} \psi \vert \\ \notag
                       &\leq \frac{2n}{R} w \vert \partial_r \psi \vert+KR w \vert \partial_r \psi \vert+3w \vert \partial^2_r \psi \vert+w \,\vert \partial_{\tau} \psi \vert,
\end{align}
where in the second inequality,
we used the fact that
$\partial_r \psi$ vanishes on $[0,R/2]\times [0,\infty)$.
By the Young inequality (\ref{eq:Young}) with $p,q=2$,
Lemma \ref{lem:cutoff},
and $\psi\leq 1$,
it holds that
\begin{align}\label{eq:estimate of A}
\Psi_2&\leq \left( \epsilon  \psi w^2+\frac{n^2}{R^2}\frac{ \vert \partial_r \psi \vert^2}{\epsilon  \psi}   \right)
+\left( \epsilon \psi w^2+\frac{K^2 R^2}{4}\frac{ \vert \partial_r \psi \vert^2}{\epsilon \psi}   \right)\\ \notag
&\qquad \qquad \qquad \qquad \qquad \,\,\,+\left( \epsilon \psi w^2+\frac{9}{4}\frac{ \vert \partial^2_r \psi \vert^2}{\epsilon \psi}   \right)
+\left( \epsilon \psi w^2+\frac{1}{4}\frac{ \vert \partial_{\tau}  \psi\vert^2}{\epsilon \psi}   \right) \\ \notag
&\leq 4\epsilon  \psi w^2+\frac{C^2_{3/4}}{\epsilon}\left(n^2+\frac{9}{4}\right)\frac{ \psi^{1/2}}{R^4}+\frac{C^2}{4\epsilon}\frac{1}{T^2}+\frac{C^2_{3/4}}{4\epsilon}K^2\psi^{1/2}\\ \label{eq:new estimate of A}
&\leq 4\epsilon  \psi w^2+\frac{C^2_{3/4}}{\epsilon}\left(n^2+\frac{9}{4}\right)\frac{1}{R^4}+\frac{C^2}{4\epsilon}\frac{1}{T^2}+\frac{C^2_{3/4}}{4\epsilon}K^2.
\end{align}
We next examine an upper bound of $\Psi_3$.
From the Young inequality (\ref{eq:Young}) with $p,q=2$,
and (\ref{eq:grad est cutoff}),
we deduce
\begin{equation}\label{eq:estimate of B}
\Psi_3= \frac{2w\Vert \nabla \psi \Vert^2}{\psi} \leq \epsilon \psi w^2+\frac{\Vert \nabla \psi \Vert^4}{\epsilon \psi^3}\leq \epsilon \psi w^2+\frac{9C^4_{3/4}}{\epsilon}\frac{1}{R^4}.
\end{equation}
We finally investigate $\Psi_4$.
The Cauchy-Schwarz inequality,
the Young inequality (\ref{eq:Young}) with $p=4/3,q=4,\epsilon=4/3$,
and (\ref{eq:grad est cutoff}) tell us that
\begin{align}\label{eq:estimate of C}
\Psi_4&=\frac{2w f g(\nabla \psi,\nabla f)}{1-f}\leq \frac{2w\vert f\vert \Vert \nabla \psi \Vert \Vert \nabla f \Vert}{1-f}=2 w^{3/2} \vert f\vert  \Vert \nabla \psi \Vert\\ \notag
                  &\leq (1-f)\psi w^2+\frac{27}{16} \frac{f^4}{(1-f)^3}  \frac{\Vert \nabla \psi \Vert^4}{\psi^3}\leq (1-f)\psi w^2+\frac{243C^4_{3/4}}{16} \,\frac{f^4}{(1-f)^3}\frac{1}{R^4}.
\end{align}

Combining (\ref{eq:estimate of C_1}), (\ref{eq:estimate of A}), (\ref{eq:estimate of B}), (\ref{eq:estimate of C}) with (\ref{eq:main difficult}),
we conclude
\begin{align*}
(1-f)\psi w^2 \leq 6\epsilon  \psi w^2&+\frac{C^2_{3/4}}{\epsilon}\left(n^2+\frac{9}{4}+9C^2_{3/4}+  \frac{243\epsilon C^2_{3/4}}{16} \,\frac{f^4}{(1-f)^3}   \right)\frac{1}{R^4}+\frac{C^2}{4\epsilon}\frac{1}{T^2}\\
                                                        &+\frac{1}{\epsilon}\left(1+\frac{C^2_{3/4}}{4}\right)K^2+\Phi.
\end{align*}
We divide the both sides by $1-f$.
Since $1/(1-f)\leq 1$ and $f/(1-f)\leq 1$,
we see
\begin{equation*}\label{eq:pre conclusion}
(1-6\epsilon)\psi w^2 \leq \frac{C^2_{3/4}}{\epsilon}\left(n^2+\frac{9}{4}+9C^2_{3/4}+\frac{243\epsilon C^2_{3/4}}{16}\right)\frac{1}{R^4}+\frac{C^2}{4\epsilon}\frac{1}{T^2}+\frac{1}{\epsilon}\left(1+\frac{C^2_{3/4}}{4}\right)K^2+\Phi.
\end{equation*}
By letting $\epsilon \to 1/12$,
we obtain
\begin{equation*}
\psi w^2 \leq \frac{\overline{C}_n}{R^4}+\frac{\widetilde{C}_1}{T^2}+\widetilde{C}_2 K^2+2\Phi.
\end{equation*}
Since $(\psi w)^2\leq \psi w^2$,
we arrive at the desired one (\ref{eq:difficult}).
\end{proof}

%%%%%%%%%%%%%%%%%%%%%%%%%%%%
\subsection{Proof of Theorems \ref{thm:main result} and \ref{thm:gradient estimate}}\label{sec:Proof of main theorems}

We are now in a position to prove Theorem \ref{thm:gradient estimate}.
\begin{proof}[Proof of Theorem \ref{thm:gradient estimate}]
For $K\geq 0$,
let $(M,g(\tau))_{\tau \in [0,\infty)}$ be backward $(-K)$-super Ricci flow satisfying $(\ref{eq:Kmain assumption})$ for all vector fields $V$,
and let $u:M\times [0,\infty)\to (0,\infty)$ denote a positive solution to backward heat equation.
As noticed in \cite{SZ},
it suffices to show the desired estimate for $A=1$.
For $R,T>0$,
we assume $u\leq 1$ on $Q_{R,T}$.
Define functions $f,w$ and $\psi$ as $(\ref{eq:log heat}), (\ref{eq:gradient log heat})$ and (\ref{eq:cutoff function}),
respectively.
For $\theta>0$
we define a compact subset $Q_{R,T,\theta}$ of $Q_{R,T}$ by
\begin{equation*}
Q_{R,T,\theta}:=\{(x,\tau)\in Q_{R,T} \mid \tau\in [\theta,T]\}.
\end{equation*}
Hereafter,
we fix a small $\theta\in (0,T/4)$.
Also,
we take a maximum point $(\overline{x},\overline{\tau})$ of $\psi w$ in $Q_{R,T,\theta}$.

We first discuss the case where the reduced distance is smooth at $(\overline{x},\overline{\tau})$.
Due to Proposition \ref{prop:difficult},
we obtain
\begin{equation}\label{eq:using difficult}
(\psi w)^2\leq c_n \left(\frac{1}{R^4}+\frac{1}{T^2}+K^2 \right)+2\Phi
\end{equation}
at $(\overline{x},\overline{\tau})$ for
\begin{equation*}
c_{n}:=\max \left\{\overline{C}_{n},\widetilde{C}_1,\widetilde{C}_2 \right\},
\end{equation*}
where $\overline{C}_{n},\widetilde{C}_1,\widetilde{C}_2>0$ and $\Phi$ are defined as $(\ref{eq:dimension epsilon})$ and (\ref{eq:maximal constant}),
respectively.
Notice that
the assumption for $\mathcal{R}(V)$ in (\ref{eq:cutoff assumption}) is satisfied in virtue of the backward $(-K)$-super Ricci flow inequality (\ref{eq:KsRF}).
On the other hand,
since $(\overline{x},\overline{\tau})$ is a maximum point,
it holds that
\begin{equation*}
\Delta(\psi w)\leq 0,\quad \partial_{\tau} (\psi w)\leq 0,\quad \nabla (\psi w)=0
\end{equation*}
at $(\overline{x},\overline{\tau})$;
in particular, $\Phi(\overline{x},\overline{\tau})\leq 0$.
Therefore,
(\ref{eq:using difficult}) implies
\begin{equation*}\label{eq:smooth conclusion}
(\psi w)(x,\tau) \leq (\psi w)(\overline{x},\overline{\tau})\leq c^{1/2}_{n}\left(\frac{1}{R^4}+\frac{1}{T^2}+K^2     \right)^{1/2}\leq c^{1/2}_{n} \left(\frac{1}{R^2}+\frac{1}{T}+K\right)
\end{equation*}
for all $(x,\tau)\in Q_{R,T,\theta}$.

We observe the non-smooth case.
We employ the barrier argument stated in Remark \ref{rem:barrier}.
By the same method as in the proof of Lemma 5.3 in \cite{M},
there are a sufficiently small $\delta>0$,
a small open neighborhood $U$ of $\overline{x}$ in $M$,
and a smooth upper barrier function $\widehat{\ell}$ of the reduced distance $\ell$ at $(\overline{x},\overline{\tau})$ on $U\times (\overline{\tau}-\delta,\overline{\tau}+\delta)$ such that $\widehat{\ell}$ satisfies (\ref{eq:time ell}), (\ref{eq:grad ell}), (\ref{eq:Laplace ell}) at $(\overline{x},\overline{\tau})$.
As analogues of $\mathfrak{d}(x,\tau)$ and $\psi(x,\tau)$,
we define
\begin{equation*}\label{eq:barrier cutoff function}
\widehat{\mathfrak{d}}(x,\tau):=\sqrt{4\tau\,\widehat{\ell}(x,\tau)},\quad \widehat{\psi}(x,\tau):=\psi\left(\widehat{\mathfrak{d}}(x,\tau),\tau \right),
\end{equation*}
where $\psi$ is a function in Lemma \ref{lem:cutoff} with $\alpha=3/4$.
Remark that
$\widehat{\psi}$ is a smooth lower barrier of $\widehat{\psi}$ at $(\overline{x},\overline{\tau})$ (i.e., $\widehat{\psi}\leq \psi$, and the equality holds at $(\overline{x},\overline{\tau})$);
in particular, $(\overline{x},\overline{\tau})$ is a maximum point of $\widehat{\psi}w$ on $(U\times (\overline{\tau}-\delta,\overline{\tau}+\delta)) \cap Q_{R,T,\theta}$.
It follows that
\begin{equation}\label{eq:max relation}
\Delta(\widehat{\psi} w)\leq 0,\quad \partial_{\tau} (\widehat{\psi} w)\leq 0,\quad \nabla (\widehat{\psi} w)=0
\end{equation}
at $(\bar{x},\bar{\tau})$.
Having (\ref{eq:max relation}) at hand,
by repeating the calculations in Lemmas \ref{lem:reduced heat estimate}, \ref{lem:reduced heat estimate2}, \ref{lem:reduced gradient estimate}, and Proposition \ref{prop:difficult},
we can verify
\begin{equation*}\label{eq:nonsmooth conclusion}
(\psi w)(x,\tau) \leq (\psi w)(\bar{x},\bar{\tau})=(\widehat{\psi} w)(\bar{x},\bar{\tau}) \leq c^{1/2}_{n} \left(\frac{1}{R^2}+\frac{1}{T}+K\right)
\end{equation*}
for all $(x,\tau)\in Q_{R,T,\theta}$.
This is the same conclusion as in the smooth case.

In both cases,
by $\psi \equiv 1$ on $Q_{R/2,T/4,\theta}$,
and by the definition of $w$ and $f$,
\begin{equation*}
\frac{\Vert \nabla u \Vert}{u} \leq c^{1/4}_{n} \left(\frac{1}{R}+\frac{1}{\sqrt{T}}+\sqrt{K}\right)\left( 1+\log \frac{1}{u} \right)
\end{equation*}
on $Q_{R/2,T/4,\theta}$.
Thus,
by letting $\theta\to 0$,
we complete the proof of Theorem \ref{thm:gradient estimate}.
\end{proof}

Let us conclude Theorem \ref{thm:main result}.
\begin{proof}[Proof of Theorem \ref{thm:main result}]
Let $(M,g(\tau))_{\tau \in [0,\infty)}$ be backward super Ricci flow satisfying $(\ref{eq:main assumption})$ for all vector fields $V$.
In each statement,
it is enough to show that
the gradient of $u$ vanishes at each point in $M\times (0,\infty)$.
We fix $(x,\tau)\in M\times (0,\infty)$.

Let us show the first statement.
Let $u:M\times [0,\infty)\to (0,\infty)$ denote a positive solution to backward heat equation.
For $R>0$
we put 
\begin{equation*}
A_{R}:=\sup_{Q_{R,R^2}}u.
\end{equation*}
The growth condition (\ref{eq:positive growth}) says $\log \left(A_{R}+1\right)=o(R)$ as $R\to \infty$.
We see $(x,\tau)\in Q_{R/2,R^2/4}$ for every sufficiently large $R>0$,
and fix such one.
Applying Theorem \ref{thm:gradient estimate} with $K=0$ to a function $u+1$ on $Q_{R,R^2}$ leads us to
\begin{equation*}
\frac{\Vert \nabla u \Vert}{u+1}\leq \frac{2C_n}{R} \left( 1+\log \frac{A_{R}+1}{u+1}  \right)\leq \frac{2C_n}{R} \left( 1+\log \left(A_{R}+1\right)  \right)=\frac{2C_n}{R} \left( 1+o(R)  \right)
\end{equation*}
at $(x,\tau)$.
Letting $R\to \infty$,
we arrive at the desired conclusion.

We next prove the second statement.
Let $u:M\times [0,\infty)\to \mathbb{R}$ be a solution to backward heat equation.
For $R>0$
we set 
\begin{equation*}
\overline{A}_{R}:=\sup_{Q_{R,R^2}} \vert u\vert.
\end{equation*}
The growth condition (\ref{eq:usual growth}) tells us $\overline{A}_{R}=o(R)$ as $R\to \infty$.
We now fix a sufficiently large $R>0$ such that $(x,\tau)\in Q_{R/2,R^2/4}$,
and apply Theorem \ref{thm:gradient estimate} to $u+2\overline{A}_{R}$ on $Q_{R,R^2}$.
Since
\begin{equation*}
\overline{A}_{R}\leq u+2\overline{A}_{R}\leq 3\overline{A}_{R}
\end{equation*}
on $Q_{R,R^2}$,
we have
\begin{equation*}
\Vert \nabla u \Vert \leq \frac{2C_n}{R} \left( 1+\log \frac{3\overline{A}_{R}}{u+2\overline{A}_{R}}  \right)\left(u+2\overline{A}_{R}\right)\leq 6\left( 1+\log 3  \right)C_n \frac{o(R)}{R}
\end{equation*}
at $(x,\tau)$.
By letting $R\to \infty$,
we complete the proof of Theorem \ref{thm:main result}.
\end{proof}

%%%%%%%%%%%%%%%%%%%%%%%%%%%%
%%%%%%%%%%%%%%%%%%%%%%%%%%%%
%%%%%%%%%%%%%%%%%%%%%%%%%%%%
\section{Trace Harnack inequalities and gradient estimates for $K$-Ricci flow}\label{sec:KtrHge}
In this section,
we prove Corollary \ref{cor:grad cor}.
The proof is done by extending the Hamilton's trace Harnack inequality for Ricci flow to $K$-Ricci flow.

%%%%%%%%%%%%%%%%%%%%%%%%%%%%
\subsection{Trace Harnack inequalities}\label{sec:Trace Harnack inequalities}
In the present subsection,
we study the trace Harnack inequality for $K$-Ricci flow.
To do so,
we prepare several key tools.
For $K\neq 0$,
we define a monotone increasing, bijective function $\sigma:J\to \mathbb{R}$ by
\begin{equation*}
\sigma(s):=-\frac{1}{2K}\log (1-2Ks),
\end{equation*}
where
\begin{equation*}
J:=\begin{cases}
             (-\infty,1/2K) & \text{if $K>0$}, \\
              (1/2K,\infty) & \text{if $K<0$}.
           \end{cases}
\end{equation*}
This function satisfies
\begin{equation}\label{eq:basic parametrization}
\sigma'(s)=e^{2K \sigma(s)}.
\end{equation}
Furthermore,
its inverse function $\sigma^{-1}:\mathbb{R}\to J$ is given by
\begin{equation}\label{eq:scaling inverse}
\sigma^{-1}(t)=\frac{1-e^{-2Kt}}{2K}
\end{equation}
with $\sigma^{-1}(0)=0$.
Our proof is based on the following observation (cf. Theorem 1.11 in \cite{KS}):
\begin{lem}\label{lem:scaling argument}
Let $K\neq 0$,
and let $(M,g(t))_{t \in [0,\mathcal{T})}$ be a time-dependent Riemannian manifold.
For $s \in [0,\sigma^{-1}(\mathcal{T}))$ we define
\begin{equation}\label{lem:scaling metric}
\overline{g}(s):=e^{-2K \sigma(s)}g(\sigma(s)).
\end{equation}
Then the following are equivalent:
\begin{enumerate}\setlength{\itemsep}{+1.0mm}
\item $(M,g(t))_{t \in [0,\mathcal{T})}$ is $K$-Ricci flow;
\item $(M,\overline{g}(s))_{s \in [0,\sigma^{-1}(\mathcal{T}))}$ is Ricci flow.
\end{enumerate}
\end{lem}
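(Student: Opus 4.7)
The plan is to prove both directions simultaneously by a direct computation showing that the equation
\[
\partial_{s}\overline{g}+2\overline{\ric}=0
\]
is equivalent to the $K$-Ricci flow equation $\partial_{t}g+2\ric-2Kg=0$ at time $t=\sigma(s)$, via the change of variable $\overline{g}(s)=e^{-2K\sigma(s)}g(\sigma(s))$.

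First I would record the crucial observation that, at each fixed $s$, the metric $\overline{g}(s)$ differs from $g(\sigma(s))$ only by a spatially constant factor $e^{-2K\sigma(s)}$. Since the Ricci tensor is invariant under multiplication of the metric by a positive constant (it transforms as a $(0,2)$ tensor with no scaling), this gives the identity
\[
\overline{\ric}(s)=\ric(\sigma(s)).
\]
This is the single non-computational input and the main reason the lemma is so clean; by contrast, a genuine (spatially varying) conformal change would produce extra Hessian and gradient terms that would obstruct the argument.

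Next I would differentiate the defining formula $(\ref{lem:scaling metric})$ in $s$ using the chain rule and the relation $\sigma'(s)=e^{2K\sigma(s)}$ from $(\ref{eq:basic parametrization})$:
\[
\partial_{s}\overline{g}(s)=-2K\sigma'(s)\,e^{-2K\sigma(s)}\,g(\sigma(s))+e^{-2K\sigma(s)}\sigma'(s)\,(\partial_{t}g)(\sigma(s))=(\partial_{t}g)(\sigma(s))-2Kg(\sigma(s)),
\]
where the factor $\sigma'(s)e^{-2K\sigma(s)}=1$ by $(\ref{eq:basic parametrization})$. Combining this with the Ricci-invariance identity, the Ricci flow equation for $\overline{g}$ becomes
\[
(\partial_{t}g)(\sigma(s))-2Kg(\sigma(s))=-2\ric(\sigma(s)),
\]
which, since $\sigma:[0,\sigma^{-1}(\mathcal{T}))\to[0,\mathcal{T})$ is a bijection by $(\ref{eq:scaling inverse})$, holds for all $s$ if and only if the $K$-Ricci flow equation $\partial_{t}g=-2\ric+2Kg$ holds for all $t\in[0,\mathcal{T})$.

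No step looks genuinely hard; the only subtlety is checking that the domain of $s$ matches $[0,\sigma^{-1}(\mathcal{T}))$ and that $\sigma$ is well-defined and monotone on the relevant interval $J$, both of which follow immediately from the formulas for $\sigma$ and $\sigma^{-1}$. The only place where one could slip is in being careful that the conformal factor is spatially constant so that no extra curvature terms appear; once that is noted, the proof is a one-line algebraic manipulation hinged on the identity $\sigma'(s)e^{-2K\sigma(s)}=1$, which is precisely what forced the definition of $\sigma$ in the first place.
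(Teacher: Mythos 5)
Your proof is correct and takes exactly the same route as the paper: differentiate $\overline{g}(s)=e^{-2K\sigma(s)}g(\sigma(s))$ using the chain rule and the identity $\sigma'(s)e^{-2K\sigma(s)}=1$, then invoke the invariance of the Ricci tensor under multiplication of the metric by a positive constant. You merely spell out a few more intermediate steps (notably writing $\overline{\ric}(s)=\ric(\sigma(s))$ explicitly and commenting on bijectivity of $\sigma$), but the argument is the same.
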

\begin{proof}
The property (\ref{eq:basic parametrization}) yields
\begin{equation*}
\partial_s \overline{g}=\sigma'(s)e^{-2K \sigma(s)}(-2Kg+\partial_t g)=-2Kg+\partial_t g.
\end{equation*}
We arrive at the desired assertion since the Ricci curvature is invariant under multiplication of Riemannian metric by positive constants.
\end{proof}

The following is the main result of this subsection:
\begin{thm}[\cite{H2}]\label{thm:KtrHarnack}
For $K \in \mathbb{R}$,
let $(M,g(t))_{t \in [0,\mathcal{T})}$ denote a complete $K$-Ricci flow with bounded, non-negative curvature operator.
Then we have
\begin{equation*}
\partial_t S+\frac{2KS}{1-e^{-2Kt}}-2g(\nabla S,V)+2\ric(V,V)\geq 0
\end{equation*}
for all vector fields $V$ and $t>0$,
where $S$ denotes the scalar curvature.
In the case of $K=0$,
we interpret $2K/(1-e^{-2Kt})$ in the second term as the limit $1/t$.
\end{thm}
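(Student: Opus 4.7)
The plan is to reduce the statement to Hamilton's classical trace Harnack inequality for Ricci flow via the scaling argument of Lemma \ref{lem:scaling argument}. For $K=0$ there is nothing to do beyond invoking the classical Hamilton inequality, with the substitution $V \mapsto -V$ (which flips the sign of the gradient term while leaving the quadratic Ricci term unchanged) and with $2K/(1-e^{-2Kt})$ understood as $1/t$ in the limit. So I focus on the case $K \neq 0$.

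First I would form $\bar g(s) := e^{-2K\sigma(s)} g(\sigma(s))$ on $s \in [0, \sigma^{-1}(\mathcal T))$, which by Lemma \ref{lem:scaling argument} is a complete Ricci flow. The hypotheses transfer cleanly: the spatially constant conformal rescaling by $c := e^{-2K\sigma(s)}$ leaves the $(1,3)$-curvature tensor and the $(0,2)$-Ricci tensor invariant, so the non-negativity of the curvature operator is preserved, and the norm of the curvature operator at time $s$ picks up only the factor $e^{2K\sigma(s)}$, which is bounded on any compact $s$-subinterval. Hamilton's trace Harnack inequality applied to $\bar g(s)$, followed by the substitution $X \mapsto -X$, then reads
\begin{equation*}
\partial_s \bar S + \frac{\bar S}{s} - 2 \bar g\bigl(\bar\nabla \bar S, X\bigr) + 2\bar\ric(X, X) \geq 0
\end{equation*}
for every vector $X$ and every $s > 0$.

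Next I would translate back using the scaling identities $\bar S = e^{2Kt} S$, $\bar\ric(X,X) = \ric(X,X)$, and $\bar g(\bar\nabla \bar S, X) = X(\bar S) = e^{2Kt}\, g(\nabla S, X)$, together with $dt/ds = \sigma'(s) = e^{2Kt}$ and $s = \sigma^{-1}(t) = (1 - e^{-2Kt})/(2K)$. A short chain-rule computation yields $\partial_s \bar S = e^{4Kt}(2KS + \partial_t S)$ and $\bar S / s = 2K e^{2Kt} S / (1 - e^{-2Kt})$. Substituting, dividing by $e^{4Kt}$, and using the telescoping identity $2K + 2K e^{-2Kt}/(1 - e^{-2Kt}) = 2K/(1 - e^{-2Kt})$ produces
\begin{equation*}
\partial_t S + \frac{2KS}{1 - e^{-2Kt}} - 2 e^{-2Kt}\, g(\nabla S, X) + 2 e^{-4Kt}\, \ric(X, X) \geq 0.
\end{equation*}
Since $X$ is arbitrary, the substitution $X = e^{2Kt} V$ absorbs both exponential prefactors simultaneously and yields exactly the claimed inequality.

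The main issue is nothing deep; it is really careful bookkeeping of how $S$, $\ric$, the gradient, and the time derivative behave under the time-dependent constant conformal rescaling, together with the verification that Hamilton's standing hypotheses survive the rescaling on each compact $s$-subinterval. The key algebraic point is that the leftover prefactors $e^{-2Kt}$ on the gradient term and $e^{-4Kt}$ on the Ricci term are in exactly the right ratio for a single rescaling $X \mapsto e^{2Kt} V$ of the test vector to eliminate both at once.
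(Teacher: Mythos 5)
Your proposal is correct and follows essentially the same route as the paper: rescale to a genuine Ricci flow via Lemma~\ref{lem:scaling argument}, apply Hamilton's trace Harnack inequality, translate back using the chain rule and the scaling identities, and absorb the leftover $e^{-2Kt}$ and $e^{-4Kt}$ prefactors by the substitution $\overline{V}=e^{2Kt}V$. Your additional remarks on the sign convention (via $V\mapsto -V$) and on boundedness of the rescaled curvature operator on compact $s$-subintervals are sensible bookkeeping that the paper leaves implicit.
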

\begin{proof}
When $K=0$,
this theorem is nothing but the Hamilton's trace Harnack inequality for Ricci flow (see Corollary 1.2 in \cite{H2}).
We now investigate the case of $K\neq 0$.
By virtue of Lemma \ref{lem:scaling argument},
we can construct an associated Ricci flow $(M,\overline{g}(s))_{s \in [0,\sigma^{-1}(\mathcal{T}))}$ determined by (\ref{lem:scaling metric}),
which also has bounded, non-negative curvature operator.
The idea is to apply the Hamilton's trace Harnack inequality to this Ricci flow.
By doing that,
\begin{equation}\label{eq:scaling trace harnack}
\partial_s \overline{S}+\frac{\overline{S}}{s}-2\overline{g}(\overline{\nabla}\, \overline{S},\overline{V})+2\overline{\ric}(\overline{V},\overline{V})\geq 0
\end{equation}
for all vector fields $\overline{V}$ and $s>0$,
where $\overline{\nabla},\overline{\ric},\overline{S}$ are the gradient, Ricci curvature, scalar curvature induced from $\overline{g}(s)$,
respectively.
Let us translate (\ref{eq:scaling trace harnack}) into the language of $g(t)$.
One can verify
\begin{align*}
\overline{\ric}(\overline{V},\overline{V})&=\ric(\overline{V},\overline{V}),\quad \overline{S}=e^{2K \sigma(s)}S,\quad \overline{g}(\overline{\nabla} \,\overline{S},\overline{V})=e^{2K \sigma(s)}g(\nabla S,\overline{V}),\\
\partial_s \overline{S}&=\sigma'(s)e^{2K \sigma(s)}(2KS+\partial_t S)=e^{4K \sigma(s)}(2KS+\partial_t S).
\end{align*}
Substituting these equations into (\ref{eq:scaling trace harnack}),
we obtain
\begin{equation*}
e^{4Kt}(2KS+\partial_t S)+\frac{e^{2Kt}S}{\sigma^{-1}(t)}-2e^{2Kt}g(\nabla S,\overline{V})+2\ric(\overline{V},\overline{V})\geq 0
\end{equation*}
for all vector fields $\overline{V}$ and $t>0$.
In particular,
(\ref{eq:scaling inverse}) leads to
\begin{equation}\label{eq:pre KtH}
\partial_t S+\frac{2KS}{1-e^{-2Kt}}-2g(\nabla S,e^{-2Kt}\overline{V})+2\ric(e^{-2Kt}\overline{V},e^{-2Kt}\overline{V})\geq 0.
\end{equation}
For a given vector field $V$,
we complete the proof by applying (\ref{eq:pre KtH}) to $\overline{V}:=e^{2Kt}V$.
\end{proof}

We can derive the following trace Harnack inequality for ancient $K$-Ricci flow:
\begin{cor}[\cite{H2}]\label{cor:ancient KtrHarnack}
For $K \in \mathbb{R}$,
let $(M,g(t))_{t \in (-\infty,0]}$ be a complete ancient $K$-Ricci flow with bounded, non-negative curvature operator.
Then for all vector fields $V$,
we have:
\begin{enumerate}\setlength{\itemsep}{+1.0mm}
\item If $K> 0$, then
\begin{equation*}
\partial_t S+2KS-2g(\nabla S,V)+2\ric(V,V)\geq 0;
\end{equation*}
\item if $K\leq 0$, then
\begin{equation}\label{eq:negative ancient KtrHarnack}
\partial_t S-2g(\nabla S,V)+2\ric(V,V)\geq 0.
\end{equation}
\end{enumerate}
\end{cor}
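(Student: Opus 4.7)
The plan is to reduce to Theorem \ref{thm:KtrHarnack} by a time-shift argument, and then take a limit. Given an ancient $K$-Ricci flow $(M,g(t))_{t \in (-\infty,0]}$, fix any $t_0 < 0$ and define the time-shifted flow $\widetilde{g}(s) := g(s+t_0)$ for $s \in [0,-t_0)$. Since the Ricci flow equation (and hence the $K$-Ricci flow equation) is invariant under time translation, $(M,\widetilde{g}(s))_{s\in[0,-t_0)}$ is again a complete $K$-Ricci flow, and its curvature operator remains bounded and non-negative. Hence Theorem \ref{thm:KtrHarnack} applies.

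Translating the conclusion back to the original time parameter $t = s + t_0 \in (t_0,0]$, and observing that the scalar curvature, Ricci curvature, gradient operator, and time derivative of $S$ are unchanged by a pure time shift, one obtains
\begin{equation*}
\partial_t S + \frac{2KS}{1-e^{-2K(t-t_0)}} - 2g(\nabla S, V) + 2\ric(V,V) \geq 0
\end{equation*}
for every vector field $V$ and every $t \in (t_0, 0]$. The inequality holds for all $t_0 < t$, so the strategy is to let $t_0 \to -\infty$ and identify the limit of the middle term in each regime for $K$.

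For $K > 0$, we have $e^{-2K(t-t_0)} \to 0$ as $t_0 \to -\infty$, so $\frac{2KS}{1-e^{-2K(t-t_0)}} \to 2KS$, giving statement (1). For $K < 0$, we have $e^{-2K(t-t_0)} \to +\infty$, so $1 - e^{-2K(t-t_0)} \to -\infty$; combined with the fact that $S$ is non-negative (by the non-negativity of the curvature operator) and uniformly bounded (by hypothesis), the fraction tends to $0$, yielding (\ref{eq:negative ancient KtrHarnack}). For $K = 0$, the convention $\frac{2K}{1-e^{-2Kt}} = 1/t$ gives a term $S/(t-t_0)$, which also tends to $0$, again producing (\ref{eq:negative ancient KtrHarnack}).

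I do not anticipate a serious obstacle here: the only substantive point beyond the time shift is verifying that the limit of $\frac{2KS}{1-e^{-2K(t-t_0)}}$ behaves as claimed, and this is where the hypotheses $S \geq 0$ and $S$ bounded (for $K < 0$) are crucially used. Everything else is formal: the time translation preserves the $K$-Ricci flow structure and all curvature terms, and the uniform inequality passes to the limit because it is a pointwise inequality in $(t,V)$ that holds for all admissible $t_0$.
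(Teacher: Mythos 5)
Your proof is correct and follows essentially the same route as the paper: time-translate to place $[0,-t_0)$ as the time interval, apply Theorem~\ref{thm:KtrHarnack}, and send $t_0 \to -\infty$. One small mis-attribution worth flagging: for $K<0$ (and $K=0$) the fraction $\frac{2KS}{1-e^{-2K(t-t_0)}}$ tends to $0$ at each fixed $(x,t)$ simply because the denominator diverges while $S(x,t)$ is a fixed finite value, so neither $S\geq 0$ nor uniform boundedness of $S$ is actually invoked in the limit step itself; those hypotheses are used earlier, to justify applying Theorem~\ref{thm:KtrHarnack} (bounded, non-negative curvature operator) to the shifted flow.
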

\begin{proof}
The desired inequalities follow from the standard argument.
For a negative $t_{0}<0$,
by applying Theorem \ref{thm:KtrHarnack} to a parallel translation $(M,\widetilde{g}(s))_{s \in [0,-t_0)}$ defined as $\widetilde{g}(s):=g(s+t_0)$,
we see that
\begin{equation}\label{eq:pre Kancient}
\partial_t S+\frac{2KS}{1-e^{-2K(t-t_0)}}-2g(\nabla S,V)+2\ric(V,V)\geq 0
\end{equation}
for all vector fields $V$ and $t\in (t_0,0)$.
Now,
for a given time $t\in (-\infty,0)$,
we take $t_0<0$ such that $t_{0}<t$.
Letting $t_0\to -\infty$ in (\ref{eq:pre Kancient}),
we arrive at the desired claim.
\end{proof}

%%%%%%%%%%%%%%%%%%%%%%%%%%%%
\subsection{Proof of Corollary \ref{cor:grad cor}}\label{sec:Proof of grad cor}
The aim of this subsection is to give a proof of Corollary \ref{cor:grad cor}.
Besides Corollary \ref{cor:ancient KtrHarnack},
we will use the following:
\begin{lem}\label{lem:KFRDV}
For $K\geq 0$,
let $(M,g(\tau))_{\tau \in [0,\infty)}$ be a backward $(-K)$-Ricci flow.
Then for all vector fields $V$, we have
\begin{equation*}
\mathcal{D}(V)=-2K\left(H+\Vert V\Vert^2   \right).
\end{equation*}
\end{lem}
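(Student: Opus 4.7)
The plan is to split $\mathcal{D}(V)=\mathcal{D}_{0}(V)+2\mathcal{R}(V)$ as in Remark \ref{rem:division} and handle each part separately, using the defining equality of backward $(-K)$-Ricci flow, which reads $\ric = h - K g$ (equivalently $h=\ric+Kg$). The conformal piece is immediate:
\begin{equation*}
\mathcal{R}(V)=\ric(V,V)-h(V,V)=-K\,g(V,V)=-K\Vert V\Vert^{2},
\end{equation*}
so the whole problem reduces to showing $\mathcal{D}_{0}(V)=-2KH$.

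First I would dispatch the terms in $\mathcal{D}_{0}(V)$ that are linear in $V$. Since $h=\ric+Kg$ and $\nabla g=0$, one has $\Div h=\Div\ric=\tfrac12 \nabla S$ by the contracted second Bianchi identity, where $S$ denotes the scalar curvature of $g(\tau)$. Also $H=\tr h=S+Kn$, so $\nabla H=\nabla S$. Hence
\begin{equation*}
4\Div h(V)-2g(\nabla H,V)=2g(\nabla S,V)-2g(\nabla S,V)=0,
\end{equation*}
and the only surviving part of $\mathcal{D}_{0}(V)$ is the $V$-independent combination $-\partial_{\tau}H-\Delta H-2\Vert h\Vert^{2}$.

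Next I would compute this scalar using the standard variation formula for the scalar curvature: if $\partial_{\tau}g=v$ with trace $\tr v=U$, then
\begin{equation*}
\partial_{\tau}S=-\Delta U+\Div\Div v-\langle v,\ric\rangle.
\end{equation*}
Here $v=2h=2\ric+2Kg$, so $U=2S+2Kn$. Using $\Div\Div\ric=\tfrac12\Delta S$ (again contracted Bianchi), $\Div\Div(Kg)=0$, and $\langle g,\ric\rangle=S$, substitution yields $\partial_{\tau}S=-\Delta S-2\Vert\ric\Vert^{2}-2KS$. Combining this with $\Vert h\Vert^{2}=\Vert\ric\Vert^{2}+2KS+K^{2}n$ (from $\Vert g\Vert^{2}=n$) and with $\partial_{\tau}H=\partial_{\tau}S$, $\Delta H=\Delta S$, the scalar terms collapse:
\begin{equation*}
-\partial_{\tau}H-\Delta H-2\Vert h\Vert^{2}=(\Delta S+2\Vert\ric\Vert^{2}+2KS)-\Delta S-2\Vert\ric\Vert^{2}-4KS-2K^{2}n=-2K(S+Kn)=-2KH.
\end{equation*}
Combining $\mathcal{D}_{0}(V)=-2KH$ with $2\mathcal{R}(V)=-2K\Vert V\Vert^{2}$ gives the claim.

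The calculation is essentially routine; the main obstacle is just careful bookkeeping so that the $\Vert\ric\Vert^{2}$ terms and the $KS$ cross terms cancel correctly between $\partial_{\tau}H$, $\Vert h\Vert^{2}$, and $\Delta H$. (Alternatively, one could try to deduce this from the $K=0$ case $\mathcal{D}(V)=0$ via the conformal-time reparametrization of Lemma \ref{lem:scaling argument}, but tracking how $H$, $h$ and $\ric$ transform under that rescaling seems no simpler than the direct computation above.)
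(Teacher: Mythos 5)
Your proof is correct and follows essentially the same route as the paper: split $\mathcal{D}=\mathcal{D}_0+2\mathcal{R}$, use $h=\ric+Kg$ and $H=S+nK$, apply the contracted second Bianchi identity to kill the $V$-linear terms, and invoke the evolution formula $\partial_{\tau}S=-\Delta S-2\Vert\ric\Vert^{2}-2KS$ to collapse the remaining scalar part to $-2KH$. The only cosmetic difference is that the paper simply cites the evolution formula for $S$ along $K$-Ricci flow (Proposition 4.10 in \cite{AH}), whereas you rederive it from the first variation of scalar curvature, which is a perfectly valid substitute.
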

\begin{proof}
The backward $(-K)$-Ricci flow equation can be written as
\begin{equation}\label{eq:-KRFeq}
h=\ric+K g;
\end{equation}
in particular,
for $\mathcal{R}(V)$ defined by $(\ref{eq:Muller sRF part})$,
\begin{equation}\label{eq:modified -KRFeq}
\mathcal{R}(V)=-K\Vert V \Vert^2. 
\end{equation}
On the other hand,
by taking the trace of (\ref{eq:-KRFeq}),
we see
\begin{equation}\label{eq:tr -KRFeq}
H=S+n K
\end{equation}
for the scalar curvature $S$,
where $n$ denotes the dimension of $M$.
Let us substitute (\ref{eq:tr -KRFeq}) into the definition of $\mathcal{D}_{0}(V)$ defined as $(\ref{eq:Muller main part})$.
By the contracted second Bianchi identity,
\begin{equation}\label{eq:KD0}
\mathcal{D}_{0}(V)=-\partial_{\tau} S-\Delta S-2 \Vert \ric \Vert^2-4 K S-2n K^2.
\end{equation}
Also,
we possess the following evolution formula for $S$ (see e.g., Proposition 4.10 in \cite{AH}):
\begin{equation}\label{eq:Kscal}
\partial_{\tau}S=-\Delta S-2 \Vert \ric \Vert^2-2K S.
\end{equation}
Combining (\ref{eq:KD0}) and (\ref{eq:Kscal}),
we obtain
\begin{equation}\label{eq:cplt KD0}
\mathcal{D}_{0}(V)=-2KS-2nK^2=-2KH.
\end{equation}
From (\ref{eq:modified -KRFeq}) and (\ref{eq:cplt KD0}),
we conclude the desired formula.
\end{proof}

We are now in a position to prove Corollary \ref{cor:grad cor}.

\begin{proof}[Proof of Corollary \ref{cor:grad cor}]
For $K \geq 0$,
let $(M,g(\tau))_{\tau \in [0,\infty)}$ be an $n$-dimensional,
complete backward $(-K)$-Ricci flow with bounded, non-negative curvature operator.
It suffices to check that
$(M,g(\tau))_{\tau \in [0,\infty)}$ satisfies the assumption in Theorem \ref{thm:gradient estimate}.

By the backward $(-K)$-Ricci flow equation,
for the scalar curvature $S$,
we have
\begin{equation*}
h=\ric+Kg,\quad H=S+nK.
\end{equation*}
In particular,
by the non-negativity of curvature operator,
the admissibility and the non-negativity of $H$ in (\ref{eq:Kmain assumption}) are satisfied.
Lemma \ref{lem:KFRDV} yields the assumption for $\mathcal{D}(V)$ in (\ref{eq:Kmain assumption}).
Due to (\ref{eq:negative ancient KtrHarnack}) in Corollary \ref{cor:ancient KtrHarnack} and $K\geq 0$,
\begin{equation*}
\mathcal{H}(V)+\frac{H}{\tau}=-\partial_{\tau} S-2g(\nabla S,V)+2\ric(V,V)+2K\Vert V\Vert^2\geq 0,
\end{equation*}
which is the assumption for $\mathcal{H}(V)$ in (\ref{eq:Kmain assumption}).
We complete the proof of Corollary \ref{cor:grad cor}.
\end{proof}

%%%%%%%%%%%%%%%%%%%%%%%%%%%%
%%%%%%%%%%%%%%%%%%%%%%%%%%%%
%%%%%%%%%%%%%%%%%%%%%%%%%%%%
\section{Comparisons with other space-only local gradient estimates}\label{sec:Comparisons with other}
In this section,
we compare Theorem \ref{thm:gradient estimate} with other space-only local gradient estimates.
We aim to make our contribution clear.

%%%%%%%%%%%%%%%%%%%%%%%%%%%%
\subsection{Souplet-Zhang gradient estimate}\label{sec:comp SZ}
We first compare our method of the proof with that of Souplet-Zhang gradient estimate in \cite{SZ}.

As stated in Section \ref{sec:Proof of main results},
we have proved Theorem \ref{thm:gradient estimate} along the line of the proof of Souplet-Zhang gradient estimate.
On the other hand,
there exists only one technically different part.
That is the upper estimate of $\Psi_2$ demonstrated in (\ref{eq:different point}), (\ref{eq:different point2}) and (\ref{eq:estimate of A}).

In \cite{SZ},
they have obtained such an upper estimate by giving upper bounds of
\begin{equation}\label{eq:SZpoint1}
(\Delta+\partial_{\tau})\mathfrak{d}
\end{equation}
and
\begin{equation}\label{eq:SZpoint2}
\Vert \nabla \mathfrak{d} \Vert^2
\end{equation}
at the stage of (\ref{eq:different point}).
In the static case,
$\mathfrak{d}$ coincides with the Riemannian distance function from the fixed point (see Remark \ref{rem:static Liouville}).
Then they have estimated (\ref{eq:SZpoint1}) by use of the Laplacian comparison in Riemannian geometry,
and the fact that its time derivative vanishes.
For (\ref{eq:SZpoint2}),
they did nothing in fact since it is identical to a universal constant $1$.

In our case,
instead of (\ref{eq:SZpoint1}),
we provide an upper bound of
\begin{equation*}\label{eq:heat reduced distance}
(\Delta+\partial_{\tau})\overline{L}
\end{equation*}
in (\ref{eq:new different point}) by using Lemma \ref{lem:reduced heat estimate2},
which is a combination of the Laplacian comparison (\ref{eq:Laplace ell}) in reduced geometry,
and the time derivative formula (\ref{eq:time ell}) for reduced distance.
Moreover,
we could give a universal upper bound of (\ref{eq:SZpoint2}) in (\ref{eq:new different point}) by Lemma \ref{lem:reduced gradient estimate} (cf. Remark \ref{rem:static universal}).

%%%%%%%%%%%%%%%%%%%%%%%%%%%%
\subsection{Bailesteanu-Cao-Pulemotov gradient estimate}\label{sec:Bailesteanu-Cao-Pulemotov gradient estimate}
We attempt to compare Theorem \ref{thm:gradient estimate} with other space-only local gradient estimates by Bailesteanu-Cao-Pulemotov \cite{BCP}, Zhang \cite{Z}, and Ecker-Knopf-Ni-Topping \cite{EKNT}.
To make it easier,
in the next three subsections,
we translate their results into our setting and notation.

In this subsection,
we focus on the work of Bailesteanu-Cao-Pulemotov \cite{BCP},
which is most closely related to our work.
In \cite{BCP},
they have produced a space-only local gradient estimate for positive solutions to heat equation along Ricci flow over positive time interval (see Theorem 2.2 in \cite{BCP}).
In our notation,
we can show the following gradient estimate of Bailesteanu-Cao-Pulemotov type by inserting the key techniques in \cite{BCP} into the proof of Theorem \ref{thm:gradient estimate}:
\begin{thm}[\cite{BCP}]\label{thm:BCP}
Let $(M,g(\tau))_{\tau \in [0,\infty)}$ be an $n$-dimensional, complete backward super Ricci flow.
For $K\geq 0$,
we assume 
\begin{equation}\label{eq:bounded Ricci}
\vert \ric \vert\leq Kg.
\end{equation}
Let $u:M \times [0,\infty) \to (0,\infty)$ be a positive solution to backward heat equation.
For $R,T>0$ and $A>0$,
we suppose $u\leq A$ on
\begin{equation*}
B_{R,T}:=\left\{\,(x,\tau)\in M\times \left[0,T\right] \,\,\, \middle|\,\,\, d(x,\tau)\leq R  \,\right\},
\end{equation*}
where $d(x,\tau)$ denotes the Riemannian distance from a fixed point induced from $g(\tau)$.
Then there exists a positive constant $C_{n}>0$ depending only on $n$ such that on $B_{R/2,T/4}$,
\begin{equation*}
\frac{\Vert \nabla u \Vert}{u}\leq C_n \left( \frac{1}{R}+\frac{1}{\sqrt{T}}+\sqrt{K}  \right) \left( 1+\log \frac{A}{u}  \right).
\end{equation*}
\end{thm}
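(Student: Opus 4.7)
My plan is to adapt the proof of Theorem \ref{thm:gradient estimate} in Subsection \ref{sec:Proof of main theorems} by replacing the reduced distance $\mathfrak{d}(x,\tau)$ with the time-dependent Riemannian distance $d(x,\tau)$, and by replacing the reduced-geometric Lemmas \ref{lem:reduced heat estimate2} and \ref{lem:reduced gradient estimate} with the Riemannian Laplacian comparison together with a first-variation bound for $\partial_\tau d$. As in \cite{SZ} it suffices to treat the case $A=1$, so that $f:=\log u\leq 0$ and $w:=\|\nabla f\|^2/(1-f)^2$ is well-defined on $B_{R,T}$. The algebraic core carries over verbatim: Lemma \ref{lem:easy} uses only the backward heat equation and a generic metric evolution, and Lemma \ref{lem:simple} needs only $\sup f<1$ together with $\mathcal{R}(\nabla f)\geq 0$, which is ensured by the backward super Ricci flow inequality $h\leq\ric$.

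I would then set $\psi(x,\tau):=\psi(d(x,\tau),\tau)$ with $\psi$ taken from Lemma \ref{lem:cutoff} at $\alpha=3/4$, pick a maximum point $(\bar x,\bar\tau)$ of $\psi w$ on $B_{R,T,\theta}:=\{(x,\tau)\in B_{R,T}\mid \tau\ge\theta\}$, and run the proof of Proposition \ref{prop:difficult} line by line. The only substantive change occurs in $\Psi_2=-w(\Delta+\partial_\tau)\psi$. Expanding
\begin{equation*}
\Psi_2 = -w\bigl(\partial_r\psi\,(\Delta+\partial_\tau)d+\partial_r^2\psi\,\|\nabla d\|^2+\partial_\tau\psi\bigr),
\end{equation*}
I would use $\|\nabla d\|\equiv 1$ in place of Lemma \ref{lem:reduced gradient estimate}, and replace the role of Lemma \ref{lem:reduced heat estimate2} by the pointwise bound
\begin{equation*}
(\Delta+\partial_\tau)d \leq C_n\Bigl(\sqrt{K}+\tfrac{1}{d}+Kd\Bigr).
\end{equation*}
The first two terms are supplied by the standard Laplacian comparison under $\ric\geq -Kg$. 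The term $Kd$ arises from the first variation formula: along a minimizing $g(\bar\tau)$-geodesic $\gamma$ from the basepoint to $\bar x$, one has
\begin{equation*}
\partial_\tau d(\bar x,\tau)\big|_{\bar\tau}\leq \int_\gamma h(\dot\gamma,\dot\gamma)\,ds\leq K\,d(\bar x,\bar\tau),
\end{equation*}
using $h\leq\ric\leq Kg$. Since $\partial_r\psi$ is supported on $\{R/2\leq d\leq R\}$, this yields $(\Delta+\partial_\tau)d\leq C_n(\sqrt{K}+R^{-1}+KR)$ there, and the extra contribution $KR\,w|\partial_r\psi|$ absorbs into $\epsilon\psi w^2+C K^2$ via Young's inequality because $R|\partial_r\psi|\leq C_{3/4}\psi^{3/4}$.

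The main technical obstacle is the non-smoothness of $d(\cdot,\bar\tau)$ on the cut locus and at the basepoint, compounded by the fact that $d$ is the infimum of lengths in a time-varying metric. This is handled, exactly as in Remark \ref{rem:barrier}, by a Calabi-type barrier argument: at the maximum point $(\bar x,\bar\tau)$ one replaces $d$ by a smooth upper barrier obtained by displacing the basepoint a small distance along the minimizing $g(\bar\tau)$-geodesic, for which both the Laplacian comparison and the first-variation inequality above remain in force. The maximum principle relations $\nabla(\psi w)=0$, $\Delta(\psi w)\leq 0$, $\partial_\tau(\psi w)\leq 0$ at $(\bar x,\bar\tau)$ then give $\Phi(\bar x,\bar\tau)\leq 0$, and the resulting estimate
\begin{equation*}
(\psi w)(x,\tau)\leq C_n^{1/2}\Bigl(\tfrac{1}{R^2}+\tfrac{1}{T}+K\Bigr)^{1/2}\quad\text{on }B_{R,T,\theta}
\end{equation*}
becomes the desired gradient estimate on $B_{R/2,T/4}$ upon letting $\theta\to 0$ and unwinding the definitions of $w$ and $f$.
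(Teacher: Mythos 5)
Your proposal is correct and takes essentially the same approach as the paper: replace $\mathfrak{d}$ with the Riemannian distance $d$ in the cut-off, bound $\Delta d$ by the standard Laplacian comparison under $\ric\geq -Kg$, bound $\partial_\tau d\leq Kd$ via the first-variation formula using $h\leq\ric\leq Kg$, use $\|\nabla d\|\equiv 1$, and run Proposition~\ref{prop:difficult} as before. You also make explicit the Calabi-type barrier argument to handle the non-smoothness of $d$, which the paper's sketch leaves implicit.
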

\begin{proof}
We only sketch the proof.
We can prove it only by replacing the role of $\mathfrak{d}(x,\tau)$ with $d(x,\tau)$ in the definition of the cut-off function $\psi$ defined as (\ref{eq:cutoff function}) in Lemma \ref{prop:difficult}.
For such function $\psi$,
we need to present associated upper estimates for $\Psi_{1},\Psi_{2},\Psi_{3},\Psi_{4}$ defined as (\ref{eq:estimate constant}).

First,
$\Psi_1$ must be non-positive in virtue of the backward super Ricci flow inequality (\ref{eq:new back SRF}).
We discuss an upper estimate of $\Psi_{2}$.
Here we need the key techniques in \cite{BCP}.
Since the Ricci curvature is bounded from below in (\ref{eq:bounded Ricci}),
the Laplacian comparison implies
\begin{equation}\label{eq:Lap dtau}
\Delta d\leq (n-1)\sqrt{\frac{K}{n-1}}\, \frac{\cosh \sqrt{(n-1)^{-1}K}d}{\sinh \sqrt{(n-1)^{-1}K}d}\leq (n-1)\left( \frac{1}{d}+\sqrt{\frac{K}{n-1}} \right).
\end{equation}
Furthermore,
the backward super Ricci flow inequality (\ref{eq:new back SRF}) and the upper Ricci curvature bound in (\ref{eq:bounded Ricci}) lead us to
\begin{equation}\label{eq:time dtau}
\partial_{\tau} d= \int^{d}_{0}\,h(\gamma'(s),\gamma'(s))\,ds\leq \int^{d}_{0}\,\ric(\gamma'(s),\gamma'(s))\,ds\leq K d
\end{equation}
for a unit speed minimal geodesic $\gamma:[0,d]\to M$ from the fixed point (see e.g., Lemma 18.1 in \cite{CCGG2}).
We now use (\ref{eq:Lap dtau}), (\ref{eq:time dtau}) in (\ref{eq:different point}) instead of Lemma \ref{lem:reduced heat estimate2} in (\ref{eq:new different point}),
and $\Vert \nabla d \Vert=1$ in (\ref{eq:different point}) instead of Lemma \ref{lem:reduced gradient estimate} in (\ref{eq:new different point}),
which implies a similar upper estimate to (\ref{eq:estimate of A}).
Note that
due to the second term of the right hand side of (\ref{eq:Lap dtau}),
an additional term regarding $K/R^2$ appears in (\ref{eq:new estimate of A}).
We deal with such a term by dividing it into $K^2$-part and $1/R^4$-part with the help of the inequality of arithmetic-geometric means (cf. \cite{SZ}).

We also consider upper estimates of $\Psi_3,\Psi_{4}$.
In view of $\Vert \nabla d \Vert=1$,
we possess
\begin{equation}\label{eq:new grad est cutoff}
\frac{\Vert \nabla \psi \Vert^2}{\psi^{3/2}}\leq \frac{C^2_{3/4}}{R^2}
\end{equation}
instead of (\ref{eq:grad est cutoff}).
We obtain the desired estimates of $\Psi_3,\Psi_{4}$ by using (\ref{eq:new grad est cutoff}) in (\ref{eq:estimate of B}) and in (\ref{eq:estimate of C}) instead of (\ref{eq:grad est cutoff}),
respectively.
Based on these estimates,
we can prove the desired gradient estimate by the same argument as in the proof of Theorem \ref{thm:gradient estimate}.
\end{proof}

\begin{rem}\label{rem:BCP Liouville}
We can not conclude the associated Liouville theorem from Theorem \ref{thm:BCP} except for a quite specific case.
Actually,
to do so,
we need to apply Theorem \ref{thm:BCP} with $K=0$.
But in that case,
it must be Ricci flat by the assumption (\ref{eq:bounded Ricci}).
\end{rem}

%%%%%%%%%%%%%%%%%%%%%%%%%%%%
\subsection{Zhang gradient estimate}
In the present subsection,
we are concerned with the work of Zhang \cite{Z}.
He proved a space-only local gradient estimate for positive solutions to heat equation along backward ancient Ricci flow (see Theorem 3.1 (a) in \cite{Z}).
We emphasize that
he has dealt with not Ricci flow but backward one.
In the same spirit as in Subsection \ref{sec:Bailesteanu-Cao-Pulemotov gradient estimate},
we can prove the following gradient estimate of Zhang type:
\begin{thm}[\cite{Z}]\label{thm:Z}
Let $(M,g(\tau))_{\tau \in [0,\infty)}$ denote an $n$-dimensional, complete sub Ricci flow,
namely, a subsolution to the Ricci flow equation $(\ref{eq:RF})$ defined by
\begin{equation}\label{eq:subRF}
\ric\leq -h.
\end{equation}
For $K\geq 0$,
we assume 
\begin{equation}\label{eq:ZRicci}
\ric\geq -Kg.
\end{equation}
Let $u:M \times [0,\infty) \to (0,\infty)$ be a positive solution to backward heat equation.
For $R,T>0$ and $A>0$,
we suppose $u\leq A$ on $B_{R,T}$.
Then there exists a positive constant $C_{n}>0$ depending only on $n$ such that on $B_{R/2,T/4}$,
\begin{equation*}
\frac{\Vert \nabla u \Vert}{u}\leq C_n \left( \frac{1}{R}+\frac{1}{\sqrt{T}}+\sqrt{K}  \right) \left( 1+\log \frac{A}{u}  \right).
\end{equation*}
\end{thm}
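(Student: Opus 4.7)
My approach is to adapt the proof of Theorem \ref{thm:BCP} to the sub Ricci flow hypothesis, since the conclusion, the test quantity, and the cut-off geometry are identical; only the sign structure of the curvature hypothesis changes. As there, one normalizes to $A=1$, sets $f:=\log u$ and $w:=\Vert \nabla f\Vert^2/(1-f)^2$, builds the cut-off $\psi(x,\tau):=\psi(d(x,\tau),\tau)$ with $d$ the Riemannian distance induced by $g(\tau)$, and applies Lemma \ref{lem:simple} at an interior maximum $(\bar x,\bar\tau)$ of $\psi w$ on a compact subset of $B_{R,T}$, reducing the problem to upper-bounding the four error terms $\Psi_1,\ldots,\Psi_4$ from Proposition \ref{prop:difficult}.

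The only term requiring genuinely new input is $\Psi_1=-2\psi\mathcal{R}(\nabla f)/(1-f)^2$. Under the super Ricci flow hypothesis used in Theorem \ref{thm:BCP} this is $\leq 0$; here it is not. However, combining $\ric\leq -h$ with $\ric\geq -Kg$ gives
\[
\mathcal{R}(V)=\ric(V,V)-h(V,V)\geq 2\ric(V,V)\geq -2K\Vert V\Vert^2,
\]
so $\Psi_1\leq 4K\psi w$, which a Young inequality splits into an $\epsilon\psi w^2$ piece to be absorbed and a $K^2$-remainder already permitted by the conclusion.

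For $\Psi_2$, the spatial bound on $\Delta d$ is identical to (\ref{eq:Lap dtau}) since $\ric\geq -Kg$. The time derivative is controlled via the sub Ricci flow inequality: along a minimizing unit-speed $g(\tau)$-geodesic $\gamma$ one has $h(\gamma',\gamma')\leq -\ric(\gamma',\gamma')\leq K$, so $\partial_\tau d\leq K d$, parallel to (\ref{eq:time dtau}) but with $h\leq -\ric$ replacing $h\leq \ric$. Together with $\Vert \nabla d\Vert=1$ and the standard arithmetic-geometric splitting of the $KR\, w\vert\partial_r\psi\vert$ term into $K^2$- and $1/R^4$-parts, this yields the same $\Psi_2$-estimate as in Theorem \ref{thm:BCP}. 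The terms $\Psi_3$ and $\Psi_4$ use only $\Vert \nabla d\Vert=1$ via (\ref{eq:new grad est cutoff}) and coincide with their BCP analogues. The main obstacle is essentially bookkeeping this sign flip $\ric\leq -h$ versus $\ric\geq h$ in $\Psi_1$ and in $\partial_\tau d$; once that is done, choosing $\epsilon\sim 1/12$, invoking the barrier argument for the possible non-smoothness of $d(\cdot,\tau)$ at the cut locus, and letting $\theta\to 0$ conclude the proof with a dimensional constant $C_n$.
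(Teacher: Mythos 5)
Your proposal is correct and follows the paper's own argument essentially step for step: the same swap of $\mathfrak{d}$ for $d$ in the cut-off, the same derivation $\mathcal{R}(V)\geq 2\ric(V,V)\geq -2K\|V\|^2$ (hence $\Psi_1\leq 4K\psi w$) from combining $\ric\leq -h$ with $\ric\geq -Kg$, the same bound $\partial_\tau d\leq Kd$ via $h\leq-\ric$, and the same treatment of $\Psi_2,\Psi_3,\Psi_4$ using $\Vert\nabla d\Vert=1$ and the Laplacian comparison. No gaps.
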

\begin{proof}
We only outline its proof.
In the same manner as Theorem \ref{thm:BCP},
one can show it only by considering the cut-off function $\psi$ determined by not $\mathfrak{d}(x,\tau)$ but $d(x,\tau)$ in Lemma \ref{prop:difficult}.
We explain the way to bound $\Psi_{1},\Psi_{2},\Psi_{3},\Psi_{4}$ defined as (\ref{eq:estimate constant}) in this setting.
We stress again that
we now deal with not backward Ricci flow but forward one unlike Theorem \ref{thm:BCP}.
Moreover,
we only have a lower Ricci curvature bound (\ref{eq:ZRicci}) unlike (\ref{eq:bounded Ricci}).

For $\Psi_{1}$,
the sub Ricci flow inequality (\ref{eq:subRF}),
the lower Ricci curvature bound (\ref{eq:ZRicci}),
the Young inequality (\ref{eq:Young}) with $p,q=2$,
and $\psi\leq 1$ yield
\begin{equation}\label{eq:Zestimate of C_1}
\Psi_1=-\frac{2\psi \mathcal{R}(\nabla f)}{(1-f)^2}\leq -\frac{4\psi \ric(\nabla f,\nabla f)}{(1-f)^2}\leq 4K \psi w \leq \epsilon \psi^2 w^2+\frac{4K^2}{\epsilon}\leq \epsilon \psi w^2+\frac{4K^2}{\epsilon},
\end{equation}
which corresponds to (\ref{eq:estimate of C_1}).
On $\Psi_{2}$,
the sub Ricci flow inequality (\ref{eq:subRF}) and the lower Ricci curvature bound in (\ref{eq:ZRicci}) also yield
\begin{equation}\label{eq:Ztime dtau}
\partial_{\tau} d= \int^{d}_{0}\,h(\gamma'(s),\gamma'(s))\,ds\leq -\int^{d}_{0}\,\ric(\gamma'(s),\gamma'(s))\,ds\leq K d
\end{equation}
for a unit speed minimal geodesic $\gamma:[0,d]\to M$ from the fixed point.
Since the conclusion of (\ref{eq:Ztime dtau}) is same as that of (\ref{eq:time dtau}),
this together with the Laplacian comparison (\ref{eq:Lap dtau}) yields the same upper estimate of $\Psi_2$ as in Theorem \ref{thm:BCP}.
Also,
we have the same upper estimates of $\Psi_3,\Psi_{4}$.
Thus,
we complete the proof by the same argument as in Theorem \ref{thm:gradient estimate}.
\end{proof}

Unlike Theorem \ref{thm:BCP},
one can conclude the following Liouville theorem by use of Theorem \ref{thm:Z} with $K=0$:
\begin{thm}[\cite{Z}]\label{thm:Zhang Liouville}
Let $(M,g(\tau))_{\tau \in [0,\infty)}$ be a complete sub Ricci flow of non-negative Ricci curvature.
Then we have the following:
\begin{enumerate}\setlength{\itemsep}{+0.7mm}
\item Let $u:M\times [0,\infty)\to (0,\infty)$ be a positive solution to backward heat equation.
If
\begin{equation*}
u(x,\tau)=\exp\left[o\left(d(x,\tau)+\sqrt{\tau}\right)\right]
\end{equation*}
near infinity, then $u$ is constant;
\item let $u:M\times [0,\infty)\to \mathbb{R}$ be a solution to backward heat equation.
If
\begin{equation*}
u(x,\tau)=o\left(d(x,\tau)+\sqrt{\tau}\right)
\end{equation*}
near infinity, then $u$ is constant.
\end{enumerate}
\end{thm}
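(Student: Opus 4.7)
The plan is to mimic exactly the deduction of Theorem \ref{thm:main result} from Theorem \ref{thm:gradient estimate}, but using Theorem \ref{thm:Z} in place of Theorem \ref{thm:gradient estimate}. The hypothesis of non-negative Ricci curvature for the sub Ricci flow means $\ric \geq 0$, i.e.\ the Ricci lower bound (\ref{eq:ZRicci}) holds with $K=0$, so Theorem \ref{thm:Z} is applicable with $K=0$ on every parabolic cylinder $B_{R,T}$.

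For the first statement, fix an arbitrary $(x,\tau)\in M\times (0,\infty)$; it suffices to show $\nabla u$ vanishes there. Set $A_R := \sup_{B_{R,R^2}} u$. The growth hypothesis $u(y,s)=\exp[o(d(y,s)+\sqrt{s})]$ near infinity translates into $\log(A_R+1)=o(R)$ as $R\to\infty$. For all $R$ large enough we have $(x,\tau)\in B_{R/2,R^2/4}$ (since $d(x,\tau)$ is a finite fixed number and $\tau$ is fixed). Applying Theorem \ref{thm:Z} with $K=0$ to the positive solution $u+1$ on the cylinder $B_{R,R^2}$, with $A=A_R+1$, gives
\begin{equation*}
\frac{\Vert \nabla u\Vert}{u+1}(x,\tau) \leq \frac{2C_n}{R}\left(1+\log\frac{A_R+1}{u(x,\tau)+1}\right) \leq \frac{2C_n}{R}\bigl(1+o(R)\bigr),
\end{equation*}
and letting $R\to\infty$ yields $\Vert \nabla u\Vert(x,\tau)=0$.

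For the second statement, apply the same idea to $u+2\overline{A}_R$ where $\overline{A}_R:=\sup_{B_{R,R^2}}\vert u\vert$. The growth condition gives $\overline{A}_R=o(R)$. On $B_{R,R^2}$ we have $\overline{A}_R \leq u+2\overline{A}_R \leq 3\overline{A}_R$, so Theorem \ref{thm:Z} (with $K=0$, $A=3\overline{A}_R$) yields
\begin{equation*}
\Vert \nabla u\Vert(x,\tau) \leq \frac{2C_n}{R}\left(1+\log\frac{3\overline{A}_R}{u(x,\tau)+2\overline{A}_R}\right)(u(x,\tau)+2\overline{A}_R) \leq 6(1+\log 3)C_n\,\frac{o(R)}{R},
\end{equation*}
and letting $R\to\infty$ concludes.

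The main obstacle, if any, is essentially bookkeeping: verifying that the Zhang gradient estimate of Theorem \ref{thm:Z} really holds with $K=0$ under just the sub Ricci flow condition plus non-negative Ricci curvature (the hypothesis (\ref{eq:subRF}) together with $\ric\geq 0$ implies $h\leq 0$, which keeps the $L$-distance based cylinders well-behaved and the proof of Theorem \ref{thm:Z} valid). Once that is checked, the Liouville conclusion is a direct repetition of the argument already carried out for Theorem \ref{thm:main result} in Subsection \ref{sec:Proof of main theorems}, with $\mathfrak{d}(x,\tau)$ replaced throughout by $d(x,\tau)$.
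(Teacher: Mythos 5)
Your proof is correct and is exactly what the paper intends: Theorem \ref{thm:Zhang Liouville} is stated as a direct consequence of Theorem \ref{thm:Z} with $K=0$, and you carry out the same $u+1$ and $u+2\overline{A}_R$ argument used in the proof of Theorem \ref{thm:main result}, simply replacing the reduced-distance cylinders $Q_{R,T}$ by the Riemannian-distance cylinders $B_{R,T}$. One small side remark to correct: Theorem \ref{thm:Z} is formulated on the Riemannian-distance cylinders $B_{R,T}$ (not $L$-distance cylinders), so the relevant point in verifying its applicability with $K=0$ is just that $\ric\geq 0$ satisfies the lower bound (\ref{eq:ZRicci}); no control on $L$-distance is needed here.
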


%%%%%%%%%%%%%%%%%%%%%%%%%%%%
\subsection{Ecker-Knopf-Ni-Topping gradient estimate}

We discuss the work of Ecker-Knopf-Ni-Topping \cite{EKNT}.
They have investigated a space-only local gradient estimate for positive solutions to conjugate heat equation along general geometric flow over positive time interval (see Theorem 10 in \cite{EKNT}).
Thanks to their method,
we can formulate the following gradient estimate of Ecker-Knopf-Ni-Topping in our setting and notation:
\begin{thm}[\cite{EKNT}]\label{thm:EKNT}
Let $(M,g(\tau))_{\tau \in [0,\infty)}$ be an $n$-dimensional, complete sub Ricci flow.
For $K,\overline{K} \geq 0$,
we assume 
\begin{equation}\label{eq:EKNTRicci}
\ric\geq -K g,\quad \Vert \nabla H \Vert\leq \overline{K}. 
\end{equation}
Let $u:M \times [0,\infty) \to (0,\infty)$ be a positive solution to conjugate heat equation
\begin{equation}\label{eq:conjugateHE}
\left(\Delta+\partial_{\tau}+H \right)u=0.
\end{equation}
For $R,T>0$ and $A>0$,
we suppose $u\leq A$ on $B_{R,T}$.
Then there exists a positive constant $C_{n}>0$ depending only on $n$ such that on $B_{R/2,T/4}$,
\begin{equation}\label{eq:EKNT}
\frac{\Vert \nabla u \Vert}{u}\leq C_n \left( \frac{1}{R}+\frac{1}{\sqrt{T}}+\sqrt{K}+\sqrt{\overline{K}}+\overline{K}^{1/4}  \right) \left( 1+\log \frac{A}{u}  \right).
\end{equation}
\end{thm}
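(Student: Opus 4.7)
The strategy mirrors the arguments of Theorems \ref{thm:BCP} and \ref{thm:Z}, adapted to the conjugate heat equation (\ref{eq:conjugateHE}), whose extra zeroth-order term $Hu$ forces the evolution of $\log u$ to pick up additional source terms in $H$ and $\nabla H$ that must be controlled by the hypotheses $\ric \geq -Kg$ and $\Vert \nabla H\Vert \leq \overline{K}$. Note that (\ref{eq:subRF}) combined with $\ric\geq -Kg$ yields $h\leq -\ric \leq K g$, providing the \emph{a priori} bound $H\leq nK$ on the trace.

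Setting $f:=\log u$, the conjugate heat equation (\ref{eq:conjugateHE}) gives $(\Delta+\partial_\tau)f = -\Vert \nabla f\Vert^2 - H$, the analog of (\ref{eq:easy1}) with an extra $-H$. A Bochner-type computation identical to that of Lemma \ref{lem:easy} then yields
\begin{equation*}
(\Delta+\partial_\tau)\Vert \nabla f\Vert^2 = 2\Vert \nabla^2 f\Vert^2 - 2g(\nabla\Vert \nabla f\Vert^2,\nabla f) + 2\mathcal{R}(\nabla f) - 2g(\nabla H,\nabla f),
\end{equation*}
with the extra $-2g(\nabla H,\nabla f)$ coming from differentiating the $-H$ term in $\partial_\tau f$. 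Setting $w:=\Vert \nabla f\Vert^2/(1-f)^2$ and repeating the algebra in the proof of Lemma \ref{lem:simple}, the new $H$-contributions (one from the $-H$ in $\partial_\tau f$ and one from the $-2g(\nabla H,\nabla f)$ above) propagate into
\begin{equation*}
(\Delta+\partial_\tau)w - \frac{2f\,g(\nabla w,\nabla f)}{1-f} \geq 2(1-f)w^2 + \frac{2\mathcal{R}(\nabla f)}{(1-f)^2} - \frac{2g(\nabla H,\nabla f)}{(1-f)^2} - \frac{2H\Vert \nabla f\Vert^2}{(1-f)^3}.
\end{equation*}

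Next, I would adopt the Riemannian-distance cutoff $\psi(x,\tau):=\psi(d(x,\tau),\tau)$ of Theorems \ref{thm:BCP} and \ref{thm:Z}, so that $B_{R,T}$ replaces $Q_{R,T}$. The sub Ricci flow inequality (\ref{eq:subRF}) together with $\ric\geq -Kg$ gives $\partial_\tau d \leq K d$ exactly as in (\ref{eq:Ztime dtau}), and the classical Laplacian comparison provides (\ref{eq:Lap dtau}); these replace Lemma \ref{lem:reduced heat estimate2}, with the universal identity $\Vert \nabla d\Vert=1$ taking the role of Lemma \ref{lem:reduced gradient estimate}. At a maximum point $(\bar x,\bar\tau)$ of $\psi w$ on a compact subset of $B_{R,T}$, the cutoff quantities $\Psi_1,\Psi_2,\Psi_3,\Psi_4$ from (\ref{eq:estimate constant}) are treated exactly as in Theorem \ref{thm:BCP}, contributing the $1/R^4$, $1/T^2$, and $K^2$ terms to the bound on $\psi w^2$.

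The main obstacle, and the new work, is absorbing the two extra $H$-terms. The bound $|H|\leq nK$ allows the term $-2\psi H w/(1-f)$ to be estimated by Young's inequality in the form $2\psi Hw \leq \epsilon \psi w^2 + n^2 K^2/\epsilon$, feeding into the $K^2$ contribution already present. The gradient term $-2\psi g(\nabla H,\nabla f)/(1-f)^2$ is bounded in absolute value by $2\overline{K}\psi\sqrt{w}/(1-f)$ via $\Vert \nabla f\Vert=(1-f)\sqrt{w}$; this source is linear in $\sqrt{w}$ and must be split by Young's inequality with two different exponent choices, one with $p=4$, $q=4/3$ giving an $\overline{K}^{4/3}$-type residual absorbed directly against $\psi w^2$, and one with $p=q=2$ giving an $\overline{K}^2$ residual absorbed via an intermediate $\epsilon\psi w$ that is then re-absorbed against $\psi w^2$. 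Iterated absorption of these with suitably small parameters produces residual contributions of order $\overline{K}$ and $\sqrt{\overline{K}}$ in the bound on $\psi w^2$, whose square root yields the non-standard combination $\sqrt{\overline{K}}+\overline{K}^{1/4}$ in (\ref{eq:EKNT}). Collecting all estimates and taking square root along the lines of Section \ref{sec:Proof of main theorems} completes the argument. The technical heart of the proof is precisely this delicate exponent-bookkeeping, which has no counterpart in Theorems \ref{thm:BCP} or \ref{thm:Z} because those equations carry no zeroth-order term.
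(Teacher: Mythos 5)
Your overall strategy is exactly the paper's: same modified evolution equation $(\Delta+\partial_\tau)f = -\Vert\nabla f\Vert^2 - H$, same Bochner step producing the extra $-2g(\nabla H,\nabla f)$, the same inequality for $w$ with the two new source terms, the Riemannian-distance cutoff replacing $\mathfrak{d}$, the $\Psi_1,\ldots,\Psi_4$ treatment carried over from Theorem \ref{thm:Z}, the bound $H\le nK$ for the zeroth-order term, and Cauchy--Schwarz plus $\Vert\nabla H\Vert\le\overline{K}$ for the gradient term. The only place where your description does not match — and where your exponent bookkeeping would not in fact reproduce (\ref{eq:EKNT}) if followed literally — is the absorption of $\Psi_5$. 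The paper does
\begin{equation*}
\Psi_5 \le 2\overline{K}(\psi w)^{1/2} = 2\sqrt{\overline{K}}\,\sqrt{\overline{K}\psi w}\le \overline{K}(1+\psi w)\le \overline{K}+\epsilon\psi w^2 + \frac{\overline{K}^2}{4\epsilon},
\end{equation*}
that is, an arithmetic--geometric mean split first and then one Young inequality with $p=q=2$, so the residuals in the bound on $\psi w^2$ are $\overline{K}$ and $\overline{K}^2$. Passing to $\sqrt{w}$ is a \emph{fourth} root of that bound ($\psi w^2 \ge (\psi w)^2$, then two square roots), which turns $\overline{K}$ and $\overline{K}^2$ precisely into $\overline{K}^{1/4}$ and $\sqrt{\overline{K}}$. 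You instead claim residuals of order $\overline{K}$ and $\sqrt{\overline{K}}$ in the bound on $\psi w^2$, which after a single square root would give $\sqrt{\overline{K}}+\overline{K}^{1/4}$ — but the chain from $\psi w^2$ to $\sqrt{w}$ is a fourth root, not a square root, so this would actually produce $\overline{K}^{1/4}+\overline{K}^{1/8}$. Likewise, your one-shot Young split with $p=4,\ q=4/3$ applied to $\overline{K}\sqrt{\psi w}$ yields a residual $\overline{K}^{4/3}$, hence $\overline{K}^{1/3}$ in the final coefficient; this is still dominated by $\sqrt{\overline{K}}+\overline{K}^{1/4}$ up to a constant, so the theorem would follow, but it is not the route that produces the stated exponents. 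The fix is simply the paper's AM--GM pre-split.
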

\begin{proof}
Let us give an outline of the proof.
To begin with,
we need to prepare alternatives of Lemmas \ref{lem:easy} and \ref{lem:simple} for positive solution $u:M \times [0,\infty) \to (0,\infty)$ to conjugate heat equation (\ref{eq:conjugateHE}).
Under the same setting as in Lemma \ref{lem:easy},
we can verify
\begin{align}\label{eq:EKNTeasy1}
(\Delta+\partial_{\tau}) f&=-\Vert \nabla f \Vert^2-H,\\ \label{eq:EKNTeasy2}
(\Delta +\partial_{\tau}) \Vert \nabla f \Vert^2 & = 2\Vert \nabla^2 f \Vert^2-2g(\nabla \Vert \nabla f \Vert^2,\nabla f)+2\mathcal{R}(\nabla f)-2g(\nabla H,\nabla f)
\end{align}
by the same calculation as in the proof of Lemma \ref{lem:easy}.
Keeping (\ref{eq:EKNTeasy1}), (\ref{eq:EKNTeasy2}) in mind,
under the same setting as in Lemma \ref{lem:simple},
one can also deduce
\begin{equation}\label{eq:EKNTsimple}
\left(\Delta+\partial_{\tau}\right)w-\frac{2f g(\nabla w,\nabla f)}{1-f}\geq 2(1-f)w^2+\frac{2\mathcal{R}(\nabla f)}{(1-f)^2}-\frac{2g(\nabla H,\nabla f)}{(1-f)^2}-\frac{2H \Vert \nabla f \Vert^2}{(1-f)^3}
\end{equation}
from the same calculation as in the proof of Lemma \ref{lem:simple}.

Similarly to Theorem \ref{thm:Z},
we consider the cut-off function $\psi$ determined by not $\mathfrak{d}(x,\tau)$ but $d(x,\tau)$ in Lemma \ref{prop:difficult}.
In virtue of (\ref{eq:EKNTsimple}),
it holds that
\begin{equation*}
2(1-f)\psi w^2\leq \Psi_1+\Psi_2+\Psi_3+\Psi_4+\Psi_5+\Psi_6+\Phi
\end{equation*}
for $\Psi_{1},\Psi_{2},\Psi_{3},\Psi_{4}$ defined as (\ref{eq:estimate constant}),
for $\Psi_{5},\Psi_{6}$ defined by
\begin{equation*}
\Psi_5:=\frac{2\psi g(\nabla H,\nabla f)}{(1-f)^2},\quad \Psi_6:=\frac{2H\psi \Vert \nabla f \Vert^2}{(1-f)^3},
\end{equation*}
and for $\Phi$ defined as (\ref{eq:maximal constant}).
Since we assume the sub Ricci flow inequality (\ref{eq:subRF}) and the lower Ricci curvature bound in (\ref{eq:EKNTRicci}),
one can derive the same upper bounds for $\Psi_{1},\Psi_{2},\Psi_{3},\Psi_{4}$ as in Theorem \ref{thm:Z}.
Hence,
it suffices to examine upper bounds of $\Psi_{5},\Psi_{6}$.
By the Cauchy-Schwarz inequality,
the assumption (\ref{eq:EKNTRicci}) for $\Vert \nabla H \Vert$,
the facts $1/(1-f)\leq 1$ and $\psi \leq 1$,
the inequality of arithmetic-geometric means,
and the Young inequality (\ref{eq:Young}) with $p,q=2$,
\begin{align*}
\Psi_5&=\frac{2\psi g(\nabla H,\nabla f)}{(1-f)^2}\leq \frac{2\psi \Vert \nabla H \Vert \Vert \nabla f\Vert}{(1-f)^2}\leq  \frac{2\overline{K} \psi \Vert \nabla f\Vert}{1-f}\\
                       &\leq 2 \overline{K} (\psi w)^{1/2}\leq \overline{K} (1+\psi w)\leq \overline{K}+\epsilon \psi w^2+\frac{1}{4}\frac{\overline{K}^2}{\epsilon},
\end{align*}
where the first term in the right hand side of the above inequality will be reduced to $\overline{K}^{1/4}$-term in the right hand side of (\ref{eq:EKNT}).
Combining the sub Ricci flow inequality (\ref{eq:subRF}) and the lower Ricci curvature bound in (\ref{eq:EKNTRicci}),
we possess
\begin{equation*}
h\leq -\ric\leq K g,\quad H\leq nK,
\end{equation*}
and hence
\begin{equation*}
\Psi_6=\frac{2H\psi \Vert \nabla f \Vert^2}{(1-f)^3}\leq \frac{2nK \psi \Vert \nabla f \Vert^2}{(1-f)^3}\leq 2nK \psi w\leq \epsilon \psi w^2+\frac{n^2 K^2}{\epsilon}.
\end{equation*}
Here we used $1/(1-f)\leq 1$ and the Young inequality (\ref{eq:Young}) with $p,q=2$.
Summarizing these upper bounds,
we arrive at the desired gradient estimate.
\end{proof}

\begin{rem}\label{rem:EKNT Liouville}
Similarly to Theorem \ref{thm:BCP},
we can not deduce the associated Liouville theorem from Theorem \ref{thm:EKNT} except for a quite specific case.
For that purpose,
we use Theorem \ref{thm:BCP} with $K=0$ and $\overline{K}=0$.
Then,
however,
$H$ must be constant by the assumption (\ref{eq:EKNTRicci}).
\end{rem}

%%%%%%%%%%%%%%%%%%%%%%%%%%%%
\subsection{Discussion}\label{sec:Discussion}
Based on the observation in the above three subsections,
we now describe our contributions comparing Theorem \ref{thm:gradient estimate} with other space-only local gradient estimates.

Due to the work of Zhang \cite{Z},
we see that
the Liouville theorem of Souplet-Zhang type holds for backward heat equation along sub Ricci flow under a growth condition for Riemannian distance (see Theorem \ref{thm:Zhang Liouville}).
Meanwhile,
in view of the work of Bailesteanu-Cao-Pulemotov \cite{BCP},
it seems to be difficult to yield such a Liouville theorem along backward super Ricci flow (see Theorem \ref{thm:BCP} and Remark \ref{rem:BCP Liouville}).
One of our contributions is to point out that
we can formulate the Liouville theorem by considering a growth condition with regard to reduced distance instead of Riemannian distance.

The reason why Theorem \ref{thm:BCP} does not imply the associated Liouville theorem is that
the upper Ricci curvature bound is assumed,
which leads to an upper bound of the time derivative of Riemannian distance (see (\ref{eq:time dtau})).
Note that
along sub Ricci flow,
such an upper bound is derived from the lower Ricci curvature bound (see (\ref{eq:Ztime dtau})).
In the proof of Theorem \ref{thm:gradient estimate},
we overcome this issue by using the time derivative formula (\ref{eq:time ell}) for reduced distance.

In the proof of Theorem \ref{thm:gradient estimate},
we need to control not only the time derivative for reduced distance but also its Laplacian.
In Theorems \ref{thm:BCP}, \ref{thm:Z}, \ref{thm:EKNT},
the Laplacian of Riemannian distance is controlled by the lower Ricci curvature bound (see (\ref{eq:Lap dtau})).
On the other hand,
in analyzing the Laplacian of reduced distance,
the lower Ricci curvature bound seems not to be compatible with our situation.
Our another contribution is to provide an insight that
the non-negative number $K$ appeared in the lower Ricci curvature bound in (\ref{eq:bounded Ricci}), (\ref{eq:ZRicci}), (\ref{eq:EKNTRicci}) should be included in the super Ricci flow inequality (i.e., $(-K)$-super Ricci flow inequality).
Then we can complete the proof by the Laplacian comparison (\ref{eq:Laplace ell}) for reduced distance. 

%%%%%%%%%%%%%%%%%%%%%%%%%%%%
%%%%%%%%%%%%%%%%%%%%%%%%%%%%
%%%%%%%%%%%%%%%%%%%%%%%%%%%%
\subsection*{{\rm Acknowledgements}}
The first author was supported by JSPS KAKENHI (JP19K14521).
The second author was supported by JSPS Grant-in-Aid for Scientific Research on Innovative Areas ``Discrete Geometric Analysis for Materials Design" (17H06460).

%%%%%%%%%%%%%%%%%%%%%%%%%%%%

\end{document}